\documentclass[11pt]{amsart}
\usepackage{latexsym}
\usepackage{amssymb, amsmath,mathrsfs, mathtools}
\usepackage[left=2.5cm,right=2.5cm,top=2.5cm, bottom=2.5cm ]{geometry}
\usepackage[OT2,T1]{fontenc}
\usepackage{todonotes}
\usepackage{setspace}
\onehalfspacing
\usepackage{graphicx, subfigure}
\usepackage{caption}
\usepackage{hyperref}

\makeatletter
\@namedef{subjclassname@2020}{%
  \textup{2020} Mathematics Subject Classification}
\makeatother

\newtheorem{theorem}{Theorem}[section]

\newtheorem{conjecture}[theorem]{Conjecture}

\newtheorem{corollary}[theorem]{Corollary}

\theoremstyle{remark}
\newtheorem{remark}[theorem]{Remark}

\DeclareSymbolFont{cyrletters}{OT2}{wncyr}{m}{n}
\DeclareMathSymbol{\Sha}{\mathalpha}{cyrletters}{"58}
\author[Bickel]{Kelly Bickel}
\address{Department of Mathematics, Bucknell University, Olin Science Building, Lewisburg, PA 17837, USA.}
\email{kelly.bickel@bucknell.edu}

 \keywords{nilpotent matrices, compressions of shifts, Crouzeix's conjecture, numerical ranges}
  \subjclass[2020]{Primary 47A12, 47A25; Secondary 30J10, 15A60}

\author[Corbett]{Georgia Corbett}
\address{Department of Mathematics, Bucknell University, Olin Science Building, Lewisburg, PA 17837, USA.}
\email{gsfc001@bucknell.edu}

\author[Glenning]{Annie Glenning}
\address{Department of Mathematics, Bucknell University, Olin Science Building, Lewisburg, PA 17837, USA.}
\email{asg019@bucknell.edu}

\author[Guan]{Changkun Guan}
\address{Department of Mathematics, Bucknell University, Olin Science Building, Lewisburg, PA 17837, USA.}
\email{cg034@bucknell.edu}

\author[Vollmayr-Lee]{Martin Vollmayr-Lee}
\address{Department of Mathematics and Statistics, Haverford College, Haverford, PA 19041, USA.}
\email{mvollmayrl@haverford.edu}

\raggedbottom

\begin{document}

\title{Crouzeix's Conjecture, compressions of shifts, and classes of nilpotent matrices}
\date{\today}

\maketitle
\begin{abstract} This paper studies the level set Crouzeix conjecture, which is a weak version of Crouzeix's conjecture that applies to finite compressions of the shift. Amongst other results, this paper establishes the level set Crouzeix conjecture for several classes of $3\times3$, $4\times4$, and $5\times5$ matrices associated to compressions of the shift via a geometrix analysis of their numberical ranges. This paper also establishes Crouzeix's conjecture for several classes of nilpotent matrices whose studies are motivated by related compressions of shifts.

\end{abstract}
\section{Introduction} 
\subsection{General Motivation and Background}

Let $A$ be an $n\times n$ matrix and let $\|\cdot \|$ denote the length of a vector in $\mathbb{C}^n$. Then the numerical range of $A$ is the set of numbers in $\mathbb{C}$ given by
\[ W(A) = \left\{ \left \langle A x, x \right \rangle : x\in \mathbb{C}^n \text{ with } \| x \| =1\right\}.\]
In this matrix setting, the numerical range is a compact, convex set that includes the spectrum of $A$.  It can be used to approximate the eigenvalues of $A$, but also encodes more information about $A$ than the eigenvalues alone do; for example, a matrix $A$ is self-adjoint if and only if $W(A) \subseteq \mathbb{R}$. 

Our interest in the numerical range stems partially from an important open problem in operator theory called Crouzeix's conjecture, which posits that for any polynomial $p$, $W(A)$ can be used to obtain a good bound on the operator norm of the matrix $p(A)$, denoted $\| p(A)\|$. Specifically,  in \cite{Cr07} in 2007, Crouzeix stated his now-famous conjecture: for all polynomials $p$ and square matrices $A$, 
\begin{equation} \label{eqn:cc1} \|p(A) \| \le 2 \max_{z\in W(A)} |p(z)|.\end{equation}
In earlier work \cite{Crouzeix1}, Crouzeix established \eqref{eqn:cc1} for $2\times 2$ matrices and in \cite{Cr07}, Crouzeix established the full inequality \eqref{eqn:cc1} with $11.08$ in place of $2$. The best known general result is due to Crouzeix and Palencia, who showed that \eqref{eqn:cc1} holds when $2$ is replaced with $1+\sqrt{2}\approx 2.41$ in \cite{CP17}. Since 2007, Crouzeix's conjecture has been proven for a number of specific classes of matrices; these include perturbed Jordan blocks and related matrices \cite{GreenbaumChoi, CH13}, nilpotent $3\times 3$ matrices \cite{C13}, and $3\times 3$ tridiagonal Toeplitz matrices \cite{GKL}. For additional recent results related to Crouzeix's conjecture, see \cite{CGL18, CG, COR, Ov}. 

This paper is motivated by recent work by the first author, P. Gorkin, and other collaborators in \cite{BGGRSW, BG21} on connections between Crouzeix's conjecture and a specific class of matrices called $S_n$ matrices. Specifically, an $n \times n$ matrix $A$ is of class $S_n$ if $A$ is a contraction (i.e. $\| A \| \le 1$), the eigenvalues of $A$ are in $\mathbb{D}$, and $A$ has defect index one (i.e. $\mbox{rank} (I - A^\star A) =1$), see \cite{GauWu}. For example, the $n\times n$ perturbed Jordan blocks studied by Choi and Greenbaum in \cite{GreenbaumChoi} were of the form
\[J^a_n =  \begin{pmatrix} 0 & 1 & \\ & \ddots & \ddots \\ & & \ddots & 1\\ a & & & 0 \end{pmatrix}\]
and if $|a|<1$, then these are of class $S_n$. Since Choi and Greenbaum established Crouzeix's conjecture for these perturbed Jordan blocks, it makes sense to ask whether Crouzeix's conjecture might be particularly tractable for matrices of class $S_n.$

It furthermore turns out that every matrix $A$ of class $S_n$ is unitarily equivalent to an upper triangular $n\times n$ matrix $M$ with entries given by
\begin{equation} \label{eqn:M} M_{ij} = \left \{ \begin{array}{cc} a_i & \text{ if } i = j \\
0  & \text{ if } i > j \\
\sqrt{1-|a_i|^2} \sqrt{1-|a_j|^2} \prod_{k=i+1}^{j-1}(-\bar{a}_k) 
& \text{ if } i < j
\end{array}\right.,\end{equation}
where $a_1, \dots, a_n$ in the unit disk $\mathbb{D}$ are the eigenvalues of $A$. Clearly, $M$ is uniquely determined by the numbers $a_1, \dots, a_n.$ Such a list of numbers also gives rise to a natural rational function $\Theta$, called a finite Blaschke product, using the following formula
\begin{equation} \label{eqn:theta} \Theta(z) = \lambda \prod_{j=1}^n \frac{z-a_j}{1-\bar{a}_j z},\end{equation}
where $\lambda$ can be chosen to be any number in the unit circle $\mathbb{T},$ though we will generally assume that $\lambda=1.$ Finite Blaschke products are holomorphic on $\mathbb{D}$ and satisfy $|\Theta|=1$ on $\mathbb{T}$. They also play a crucial role in classical complex analysis; for example, they appear naturally in interpolation, zero-factorization, and approximation problems; indeed, they basically act as polynomials in settings where the underlying domain is $\mathbb{D}$ instead of $\mathbb{C},$ see the books \cite{GMR, Garnett} for details.

Conversely, one could start with a finite Blaschke product $\Theta$ as in \eqref{eqn:theta}, extract its zeros $a_1, \dots, a_n$ and use those to obtain a matrix $M$ as in \eqref{eqn:M}. To emphasize that we typically use this order of operations, we will denote the resulting matrix in \eqref{eqn:M} by $M_\Theta.$ This correspondence is deeper than initially apparent. 
To see this, let $H^2(\mathbb{D})$ denote the standard Hardy space on the unit disk and let $S$ denote the shift operator given by $(Sf)(z) = zf(z)$. The space $\Theta H^2(\mathbb{D})$ is a closed subspace of $H^2(\mathbb{D})$ and $K_\Theta := H^2(\mathbb{D}) \ominus  \Theta H^2(\mathbb{D})$ is called the associated model space. Then $K_\Theta$ has dimension $n$ and it is natural to consider the compression of the shift $S$ to $K_\Theta$, denoted $S_\Theta$, and defined as follows
\[ S_\Theta := P_\Theta S|_{K_\Theta},\]
 where $P_\Theta$ is the orthogonal projection from $H^2(\mathbb{D})$ onto $K_\Theta$. The matrix $M_\Theta$ given in \eqref{eqn:M} is actually a matrix representation of  $S_\Theta$ with respect to a particularly nice orthonormal basis. Since we do not need all of the details here, we refer the interested reader to the book \cite{DGSV} and the references therein for more information.

\subsection{Prior Key Results} Studying Crouzeix's conjecture for matrices of the form \eqref{eqn:M} is equivalent to studying Crouzeix's conjecture for the entire class of $S_n$ matrices, which is a key goal of the current paper. 
 As our investigations are closely motivated by some of the ideas and results from \cite{BG21}, we need to describe those here. In \cite{BG21}, the first author and P. Gorkin observed that if Crouzeix's conjecture holds for some matrix $M_\Theta$, then for every finite Blaschke product $B$ with $\deg B < \deg \Theta$, it follows that
\begin{equation} \label{eqn:LSCC} \max_{z \in W(M_\Theta)} |B(z)| \ge \tfrac{1}{2}. \end{equation} 
This has a natural geometric interpretation. Specifically, define the $1/2$-level set of $B$ by 
\[ \Omega^B_{1/2} =\left \{ z\in \mathbb{C}: |B(z)| < \tfrac{1}{2} \right\}.\]
Then the statement that \eqref{eqn:LSCC} should hold can be rephrased as the following, which we call the level set Crouzeix conjecture (or LSC conjecture for short):
\begin{conjecture}\textbf{(Level set Crouzeix conjecture)} If $\Theta$ and $B$ are finite Blaschke products with $\deg B < \deg \Theta$,  then $\displaystyle W(M_\Theta) \not \subseteq  \Omega^B_{1/2}.$
\end{conjecture}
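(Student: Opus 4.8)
The plan is to restate the conjecture analytically and then reduce it to a sharp extremal estimate for finite Blaschke products. Since $W(M_\Theta)$ is compact and convex while $|B|$ is the modulus of a holomorphic function (hence subharmonic), the statement $W(M_\Theta)\not\subseteq\Omega^B_{1/2}$ is equivalent to $\max_{z\in W(M_\Theta)}|B(z)|\ge\tfrac12$, and by the maximum principle this maximum is attained on the boundary $\partial W(M_\Theta)$. A first structural observation I would record is that $\Omega^B_{1/2}$ is compactly contained in $\mathbb{D}$ (because $|B|=1$ on $\mathbb{T}$) and that each of its bounded components must contain a zero of $B$: on a component $U$ one has $|B|=\tfrac12$ on $\partial U$, so if $B$ did not vanish in $U$ then $1/B$ would violate the maximum principle. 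Hence $\Omega^B_{1/2}$ has at most $\deg B<\deg\Theta$ components, and a hypothetical inclusion $W(M_\Theta)\subseteq\Omega^B_{1/2}$ would trap the connected set $W(M_\Theta)$---together with all eigenvalues $a_1,\dots,a_n$ of $M_\Theta$---inside a single such component. The goal then becomes to show that one component of a sublevel set of a degree-$(<n)$ Blaschke product is too ``small'' to contain the numerical range of an $n\times n$ compression of the shift.

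The engine of the argument is the rotationally symmetric case, which I would treat first and which already pins down the sharp constant. When $\Theta(z)=z^n$ the matrix $M_\Theta$ is the $n\times n$ nilpotent Jordan block and $W(M_\Theta)$ is the closed disk centered at the origin of radius $r_n=\cos\!\big(\tfrac{\pi}{n+1}\big)$. For such a disk the problem collapses to the extremal identity
\[ \min_{\deg B\le n-1}\ \max_{|z|=r_n}|B(z)| \;=\; r_n^{\,n-1}, \]
attained by $B(z)=z^{n-1}$, which I would obtain from Jensen's formula: the mean of $\log|B|$ over the circle $|z|=r_n$ equals $\log r_n^{\,\deg B}$ plus nonnegative corrections, so $\max_{|z|=r_n}|B|\ge r_n^{\,\deg B}\ge r_n^{\,n-1}$ since $r_n<1$. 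Granting this, the conjecture in the symmetric case reduces to the elementary estimate $r_n^{\,n-1}=\big(\cos\tfrac{\pi}{n+1}\big)^{\,n-1}\ge\tfrac12$ for all $n\ge2$, with equality exactly at $n=2,3$. The same template---an explicit disk or ellipse for $W(M_\Theta)$ followed by a sharp modulus bound for $B$---is what I would push through for the symmetric and low-dimensional classes the paper targets.

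For general $\Theta$ I would replace the explicit disk by the known Poncelet-type description of $\partial W(M_\Theta)$: the boundary is inscribed in a one-parameter family of circumscribing $(n+1)$-gons whose vertices run over the preimages on $\mathbb{T}$ of a companion degree-$(n+1)$ Blaschke product, with each edge touching $\partial W(M_\Theta)$ at a point that sweeps the boundary as the parameter traverses $\mathbb{T}$. Assuming $|B|<\tfrac12$ on all of $W(M_\Theta)$ for contradiction, every such touching point would satisfy $|B|<\tfrac12$, producing continuously varying families of $n+1$ points of the sublevel set attached to a rigid polygonal structure. I would then combine this with an argument-principle count---the winding of $B$ along $\partial W(M_\Theta)$ equals the number of zeros of $B$ inside $W(M_\Theta)$, which is at most $\deg B<n$---to contradict the $(n+1)$-fold Poncelet rigidity.

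The main obstacle is precisely the passage from the symmetric case to the general one, and I expect it to be the crux for the same reason the full conjecture is hard. The family of matrices $M_\Theta$ and the level sets $\Omega^B_{1/2}$ transform naturally under the Möbius automorphisms of $\mathbb{D}$, but the numerical range is only affine-covariant and not Möbius-invariant, so the clean extremal estimate of the symmetric case does not transport to configurations with clustered eigenvalues (for instance $\Theta=\big(\tfrac{z-a}{1-\bar a z}\big)^n$ with $a$ near $\mathbb{T}$), where $W(M_\Theta)$ is a small region pushed toward the boundary. Obtaining a bound uniform over all Blaschke products $B$ with $\deg B<n$ and over all such clustered spectra is the step I do not expect to close in general; this is why I would, as the paper does, first establish the conjecture for the structured and low-dimensional classes where $\partial W(M_\Theta)$ is explicit.
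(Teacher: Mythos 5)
The statement you are trying to prove is labeled a \emph{conjecture} in the paper, and the paper neither proves it nor claims to: its actual contribution is to verify the conjecture for specific parameterized families of $\Theta$ (Theorems \ref{thm:55}, \ref{thm:n27}, \ref{thm: general}, \ref{thm:44}) by exhibiting a sufficiently large pseudohyperbolic disk inside $W(M_\Theta)$ and invoking Theorem \ref{thm:phdcP}. So there is no ``paper proof'' to match, and your proposal does not close the general statement either --- as you yourself concede in the final paragraph. What you do establish correctly is the rotationally symmetric case: for $\Theta(z)=z^n$ the numerical range is the disk of radius $\cos\bigl(\tfrac{\pi}{n+1}\bigr)$, and your Jensen-formula estimate $\max_{|z|=r}|B|\ge r^{\deg B}$ is sound. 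But note that this is exactly the content of the pseudohyperbolic disk criterion (Theorem \ref{thm:phdcP}, i.e.\ Corollary 3.3 of \cite{BG21}) specialized to center $0$; since Blaschke products and pseudohyperbolic disks are M\"obius-covariant, the same estimate transports to any pseudohyperbolic disk, which is precisely the tool the paper already uses. So the part of your argument that works reproduces a known lemma rather than advancing the conjecture.

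The genuine gap is the third paragraph. The Poncelet step is not an argument: you assert that the touching points of the circumscribing $(n+1)$-gons all lying in $\Omega^B_{1/2}$ would ``contradict the $(n+1)$-fold Poncelet rigidity,'' but no contradiction is actually derived. The argument-principle count you invoke --- the winding of $B$ along $\partial W(M_\Theta)$ equals the number of zeros of $B$ inside, which is at most $\deg B < n$ --- is true but inert: nothing about the polygon family forces that winding number to be at least $n$, and indeed $B$ may have no zeros in $W(M_\Theta)$ at all while still satisfying $|B|<\tfrac12$ there (this is exactly the situation in the hard cases, e.g.\ clustered spectra near $\mathbb{T}$). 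A quantitative link between the Poncelet structure and the modulus of $B$ would be needed, and none is supplied. Your structural observations about the components of $\Omega^B_{1/2}$ (each contains a zero of $B$, there are at most $\deg B$ of them, the connected set $W(M_\Theta)$ would sit in one component) are correct but likewise lead to no contradiction. In short: the symmetric case is fine but already known, and the reduction of the general case to it is missing, which is precisely why the statement remains a conjecture.
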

This conjecture gives us a way to visualize the success (or failure) of Crouzeix's conjecture by graphing sets in the complex plane that are closely connected to simple holomorphic functions. As evidenced in Figure \ref{fig:LSCC} where we draw boundaries of the key sets, the non-containment statement $W(M_\Theta) \not \subseteq \Omega_{1/2}^B$ posited in the LSC conjecture appears quite nontrivial. While the paper \cite{BG21} established the LSC conjecture for certain classes of  pairs of $B$ and $\Theta$,  most cases remain open. 

\begin{figure}[h!] 
    \subfigure[$B$ with zeros $5/9+i/5$ and $2/3-2i/9$]
      {\includegraphics[width=0.3 \textwidth]{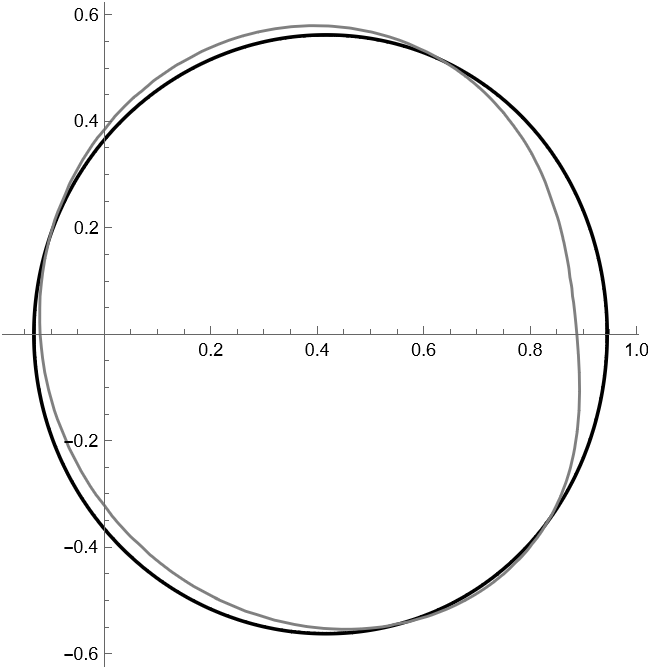}}
    \quad 
    \subfigure[$B$ with zeros $5/9$ and $3/4$]
      {\includegraphics[width=0.3 \textwidth]{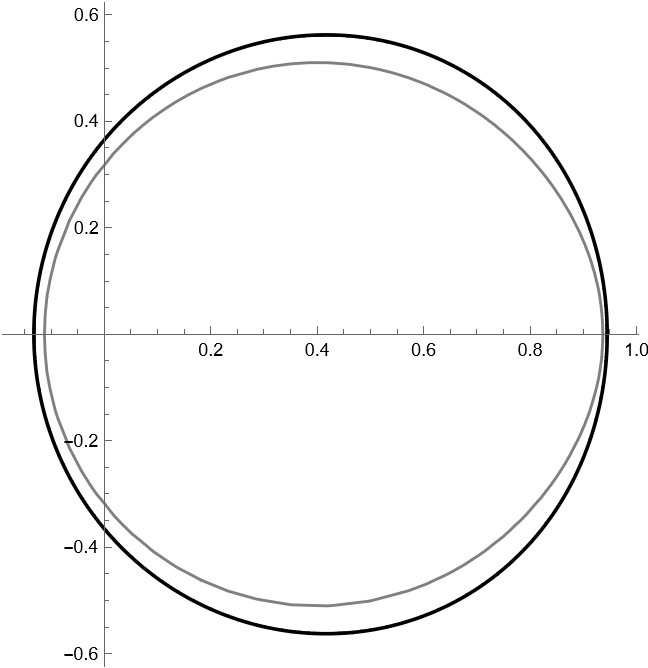}}
      \quad
          \subfigure[$B$ with zeros $1/2-2i/11$ and $2/3-i/4$]
      {\includegraphics[width=0.3 \textwidth]{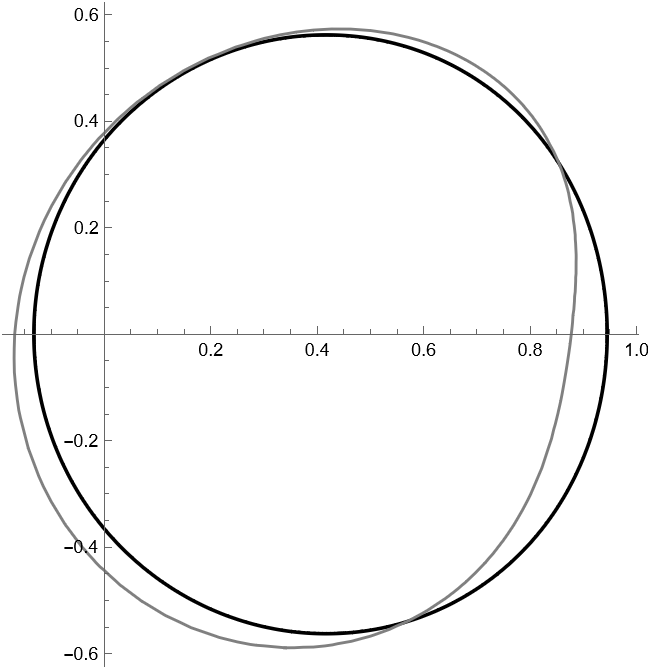}}
  \caption{\textsl{$\partial W(M_\Theta)$  for $\Theta(z)=\Big(\frac{1/2-z}{1-z/2}\Big)^3$ (in black) and  $\partial \Omega^B_{1/2}$ for several $B$ with $\deg B=2$ (in gray).}}
  \label{fig:LSCC}
\end{figure}

One particularly useful result gives a sufficient (though not necessary) condition for the LSC conjecture to hold. To state it, we need notation for disks in $\mathbb{D}.$  Specifically, let $D(c, R)$ denote a Euclidean disk in $\mathbb{D}$ with center $c$ and radius $R$ and $D_\rho(z_0, r)$ denote the pseudohyperbolic disk with (pseudohyperbolic) center $z_0 \in \mathbb{D}$ and (pseudohyperbolic) radius $r \in (0,1)$. Then
 \begin{equation} \label{eqn:diskform} D(c,R) =  \left \{ z\in \mathbb{D}:|z-c|<R \right\} \ \ \text{ and } \ \ D_\rho(z_0,r) = \left \{ z\in \mathbb{D}: \left | \frac{z-z_0}{1-\bar{z}_0 z} \right | <r \right\}.\end{equation} 
 Every pseudohyperbolic disk is actually a Euclidean disk in $\mathbb{D}$ and every Euclidean disk in $\mathbb{D}$ is a pseudohyperbolic disk. 
It turns out that if $W(M_\Theta)$ contains a sufficiently large pseudohyperbolic disk (where ``large'' is measured in terms of the pseudohyperbolic radius), then $\Theta$ satisfies the LSC conjecture (for any $B$). This appears as Corollary 3.3 in \cite{BG21} and the details are below.

\begin{theorem}\textbf{(Pseudohyperbolic disk criterion}) \label{thm:phdcP} Let $\Theta$ be a finite Blaschke product with $\deg \Theta=n$. If there is a $z_0 \in \mathbb{D}$ such that the pseudohyperbolic disk $D_\rho(z_0, (\tfrac{1}{2})^{1/(n-1)}) \subseteq W(M_\Theta)$, then $\displaystyle W(M_\Theta) \not \subseteq  \Omega^B_{1/2}$
for all finite Blaschke products $B$ with $\deg B <n.$
\end{theorem}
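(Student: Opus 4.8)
The plan is to reduce the non-containment to a statement about how large a Blaschke product must become on a pseudohyperbolic disk, and then to verify that statement by counting zeros. Set $m := \deg B < n$, so $m \le n-1$, and write $r := (\tfrac{1}{2})^{1/(n-1)} \in (0,1)$. Since pseudohyperbolic disks are conformal images of Euclidean disks, I would first normalize: let $\phi_{z_0}(w) = \frac{w + z_0}{1 + \bar{z}_0 w}$ be the disk automorphism with $\phi_{z_0}(0) = z_0$, which carries $\{|w| < r\}$ onto $D_\rho(z_0, r)$ because automorphisms preserve the pseudohyperbolic distance. Setting $g := B \circ \phi_{z_0}$, this is again a finite Blaschke product of degree $m$, and it suffices to produce a point $w^*$ with $|w^*| \le r$ and $|g(w^*)| \ge \tfrac{1}{2}$: then $z^* := \phi_{z_0}(w^*)$ lies in $\overline{D_\rho(z_0, r)}$, hence in the compact set $W(M_\Theta)$, and satisfies $|B(z^*)| \ge \tfrac{1}{2}$, so $z^* \notin \Omega^B_{1/2}$, giving $W(M_\Theta) \not\subseteq \Omega^B_{1/2}$.

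The heart of the argument is a sharp mapping estimate: the image $g(\{|w| \le r\})$ contains the closed pseudohyperbolic disk $\overline{D_\rho(a, r^m)}$ centered at $a := g(0) = B(z_0)$. To see this, fix any target $\zeta_0$ with $\rho(\zeta_0, a) \le r^m$ and compose with the automorphism $\tau_{\zeta_0}(\zeta) = \frac{\zeta - \zeta_0}{1 - \bar{\zeta}_0 \zeta}$ to form $h := \tau_{\zeta_0} \circ g$, another degree-$m$ Blaschke product whose zeros in $\mathbb{D}$ are exactly the $g$-preimages of $\zeta_0$. Writing these zeros as $d_1, \dots, d_m$, the product formula for a Blaschke product evaluated at the origin gives $\prod_{j} |d_j| = |h(0)| = \rho(a, \zeta_0) \le r^m$; hence at least one $|d_j| \le r$, for otherwise all factors would exceed $r$ and force the product above $r^m$. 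That $d_j$ is a preimage of $\zeta_0$ inside $\{|w| \le r\}$, as desired. This is the step I expect to be the main obstacle to pin down exactly, since it is where the exponent $r^m$ — and ultimately the threshold $(\tfrac{1}{2})^{1/(n-1)}$ — enters, and where one must be careful to work with closed rather than open disks.

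It then remains to check that $\overline{D_\rho(a, r^m)}$ always contains a point of modulus at least $\tfrac{1}{2}$. Because $m \le n-1$ and $r < 1$, we have $r^m = (\tfrac{1}{2})^{m/(n-1)} \ge \tfrac{1}{2}$. Rotating so that $a = |a| \ge 0$, the rightmost point of this disk is $\frac{|a| + r^m}{1 + |a|\, r^m}$, and the elementary identity $\frac{|a| + r^m}{1 + |a|\, r^m} - r^m = \frac{|a|\,(1 - r^{2m})}{1 + |a|\, r^m} \ge 0$ shows its modulus is at least $r^m \ge \tfrac{1}{2}$. Pulling this point back through $g$ and then $\phi_{z_0}$ produces the point $z^* \in W(M_\Theta)$ with $|B(z^*)| \ge \tfrac{1}{2}$, completing the argument. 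The passage to the closed disk — and hence the compactness of $W(M_\Theta)$, used to absorb $\overline{D_\rho(z_0,r)}$ — is essential only in the borderline case $m = n-1$ with $B(z_0) = 0$, where $r^m = \tfrac{1}{2}$ is attained exactly on the boundary and the strict open-disk version would fail.
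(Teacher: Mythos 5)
Your argument is correct: the normalization by disk automorphisms, the zero-product identity $\prod_j |d_j| = |h(0)| = \rho(B(z_0),\zeta_0) \le r^m$ forcing a preimage of $\zeta_0$ in $\{|w|\le r\}$, and the closing estimate $r^m = (\tfrac{1}{2})^{m/(n-1)} \ge \tfrac{1}{2}$ together with the rightmost-point computation are all sound, and you handle the open-versus-closed disk issue correctly via the compactness of $W(M_\Theta)$. The paper does not reprove this theorem — it imports it as Corollary 3.3 of \cite{BG21} — but your proof is precisely the standard zero-counting argument underlying that corollary, so the approach is essentially the same.
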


We say $\Theta$ \textbf{satisfies the pseudohyperbolic disk criterion} if there is a $z_0 \in \mathbb{D}$ such that the pseudohyperbolic disk $D_\rho(z_0, (\tfrac{1}{2})^{1/(n-1)}) \subseteq W(M_\Theta)$.
The paper \cite{BG21} applies this criterion in the setting of functions $\Theta$ with a single repeated zero. Specifically, for $t\in [0,1)$ and a positive integer $n$, define the finite Blaschke product $\Theta_t$ by 
\begin{equation}\label{eqn:unicritical}
     \Theta_t (z): = \left( \frac{z-t}{1-tz}\right)^n.
\end{equation}

When $n=3$ and $n=4$, Theorem 5.3 in \cite{BG21} established the pseudohyperbolic disk criterion for such $\Theta_t$. Here are the details.

\begin{theorem} \label{thm:pshds} Fix $t\in[0,1)$. If $\deg \Theta_t =3$, then $W(M_{\Theta_t})$ contains a pseudohyperbolic disk with radius $\tfrac{1}{2^{1/2}}$ and if $\deg \Theta_t =4$, then $W(M_{\Theta_t})$ contains a pseudohyperbolic disk with radius $\tfrac{1}{2^{1/3}}$.
\end{theorem}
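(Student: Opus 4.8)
The plan is to work directly with the numerical range of the explicit matrix $M_{\Theta_t}$ and to inscribe the required pseudohyperbolic disk along the real axis of symmetry. First I would record the matrix itself: applying \eqref{eqn:M} with $a_1=\cdots=a_n=t$ shows that $M_{\Theta_t}$ is the upper-triangular Toeplitz matrix with diagonal entries $t$ and $(i,j)$-entry $(1-t^2)(-t)^{j-i-1}$ for $i<j$; equivalently $M_{\Theta_t}=(J+tI)(I+tJ)^{-1}$, where $J$ is the $n\times n$ nilpotent Jordan block and this is the disk automorphism $\phi_t(z)=\frac{z+t}{1+tz}$ applied to $J$. Two consequences organize the argument: since the entries are real, $W(M_{\Theta_t})$ is symmetric about $\mathbb{R}$, so I may seek an inscribed pseudohyperbolic disk $D_\rho(s,r)$ with real center $s$; and at $t=0$ we recover $M_{\Theta_0}=J$, whose numerical range is the Euclidean disk $\overline{D(0,\cos(\pi/(n+1)))}$. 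For $n=3$ this disk is exactly $\overline{D_\rho(0,1/2^{1/2})}$, while for $n=4$ its pseudohyperbolic radius $\cos(\pi/5)$ exceeds $1/2^{1/3}$, so the $t=0$ endpoint already matches the claim and the work is in controlling $t\in(0,1)$.

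For $t\in(0,1)$ I would describe $\partial W(M_{\Theta_t})$ through its support function $h(\theta)=\lambda_{\max}(H_\theta)$, where $H_\theta=\tfrac12(e^{-i\theta}M_{\Theta_t}+e^{i\theta}M_{\Theta_t}^{\star})$ is Hermitian; by the real symmetry $h(-\theta)=h(\theta)$. For $n=3$ (resp.\ $n=4$) the value $h(\theta)$ is the largest root of the characteristic cubic (resp.\ quartic) of $H_\theta$, and I would exploit the real/Toeplitz structure to write these coefficients cleanly in terms of $t$, $\cos\theta$, and $\sin\theta$. A Euclidean disk $D(c,R)$ with real center $c$ lies in the convex set $W(M_{\Theta_t})$ precisely when $R\le h(\theta)-c\cos\theta$ for every $\theta$, so the largest such disk has radius $R^{\ast}(c)=\min_\theta\big(h(\theta)-c\cos\theta\big)$.

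It then remains to convert back to the pseudohyperbolic radius and to optimize the center. Writing the real-axis intercepts of $D(c,R)$ as $c\pm R$, one finds that this disk is the pseudohyperbolic disk of radius $r$ determined by $\frac{2r}{1+r^2}=\frac{2R}{1-c^2+R^2}$. Hence, setting $r_0=(1/2)^{1/(n-1)}$ and $\rho_0=\frac{2r_0}{1+r_0^2}$, the containment $D_\rho(s,r)\subseteq W(M_{\Theta_t})$ with $r\ge r_0$ reduces to producing a real center $c$ for which
\[ \min_\theta\big(h(\theta)-c\cos\theta\big)\ \ge\ \frac{1-\sqrt{1-\rho_0^2(1-c^2)}}{\rho_0}. \]
The whole theorem thus becomes: for each $t\in[0,1)$ there is a center $c=c(t)$ making this inequality hold, which one would establish by tracking the minimizing angle(s) $\theta$ and solving the resulting stationarity conditions for $c$.

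The main obstacle is exactly this final optimization. Because $h(\theta)$ is available only implicitly, as a cubic or quartic root, both the inner minimization over $\theta$ and the outer choice of center $c$ must be carried out carefully and, crucially, \emph{uniformly} in $t\in[0,1)$; the $n=4$ quartic is the most delicate point. I would also warn against the tempting shortcut of taking $c=t=\phi_t(0)$ and transporting $W(J)$ by $\phi_t$: since the numerical range does not commute with the holomorphic functional calculus, $\phi_t(W(J))$ is generally \emph{not} contained in $W(\phi_t(J))=W(M_{\Theta_t})$ — this already fails in the $2\times2$ model, where $\phi_t(W(J))=D_\rho(t,\tfrac12)$ is not contained in $W(M_{\Theta_t})=\overline{D(t,(1-t^2)/2)}$ for $t>0$ — so the correct center must be found genuinely rather than read off from the $t=0$ case. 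For $n=3$ one can lighten the computation by first using Kippenhahn's theorem to pin down the (elliptical or ovular) shape of $W(M_{\Theta_t})$ explicitly, after which the displayed inequality can be checked in closed form.
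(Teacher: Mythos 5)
Your reduction is set up correctly --- the identity $M_{\Theta_t}=(J+tI)(I+tJ)^{-1}$, the real-axis symmetry, the support-function criterion $R\le h(\theta)-c\cos\theta$, and the conversion $\frac{r}{1+r^2}=\frac{R}{1-c^2+R^2}$ are all consistent with \eqref{eqn:r} --- and your warning that one cannot simply push $W(J)$ forward by $\phi_t$ is a genuinely useful observation. But the proposal stops exactly where the theorem begins: the displayed inequality
\[ \min_\theta\bigl(h(\theta)-c\cos\theta\bigr)\ \ge\ \frac{1-\sqrt{1-\rho_0^2(1-c^2)}}{\rho_0} \]
\emph{is} the statement to be proved, and "tracking the minimizing angle(s) and solving the stationarity conditions for $c$" is a plan, not an argument. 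Worse, the route you chose makes that plan hard to execute: $h(\theta)$ is the largest eigenvalue of a Hermitian matrix depending on both $t$ and $\theta$, i.e.\ the largest root of a parameter-dependent cubic (for $n=3$) or quartic (for $n=4$), and controlling $\min_\theta$ of such a quantity uniformly for all $t\in[0,1)$ is precisely the kind of computation that tends not to close in closed form. You acknowledge this as "the main obstacle" but do not overcome it, so the proof has a genuine gap.

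For comparison, this theorem is quoted from Theorem 5.3 of \cite{BG21}, and the method (demonstrated in this paper for $n=5$ in Theorem \ref{thm:55}) deliberately avoids the exact boundary. One writes $M_{\Theta_t}=tI+(1-t^2)A_t$ and replaces $\partial W(A_t)$ by the explicit \emph{inner} curve $f_n(s)=\langle A_t\vec{x}(s),\vec{x}(s)\rangle$ from \eqref{eqn:Ct1}, with $\vec{x}$ the Haagerup--de la Harpe vector \eqref{eqn:vec1}. Because $f_n$ is an explicit trigonometric polynomial with real coefficients, the center $c(t)=\tfrac{1}{2}(f(0)+f(\pi))$ and a valid radius $R(t)$ with $|f(s)-c(t)|^2\ge R(t)^2$ come out in closed form (a polynomial inequality in $\cos s$ and $t$), and the pseudohyperbolic radius bound reduces to locating the real zeros of a single explicit polynomial in $t$. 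One gives up a little (the curve only approximates the boundary, so the inscribed disk is not optimal) but gains a computation that actually terminates. If you want to salvage your approach, you would need either to carry out the $\theta$-minimization explicitly via Kippenhahn's boundary-generating curve (feasible, if painful, for $n=3$) or to replace $h(\theta)$ by an explicit lower bound --- which is in effect what the inscribed-curve method does.
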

Part of the proof requires noting that the  $M_{\Theta_t}$ matrices are closely connected to a class of nilpotent matrices. Specifically if $\deg \Theta_t = n$ and $I$ denotes the $n\times n$ identity matrix, we can write $M_{\Theta_t} = t I + (1-t^2) A_t$, where 
\begin{equation} \label{eqn:At} M_{\Theta_t} =\begin{bmatrix} t &(1-t^2) & -t(1-t^2) & \dots & (-t)^{n-2}(1-t^2) \\
  & t & (1-t^2) & \ddots & \vdots \\
 & &   & \ddots& -t(1-t^2) \\
 &  & &   & (1-t^2)\\ 
0 & &  &   & t
\end{bmatrix} \ \text{ and }  \ A_t = \begin{bmatrix} 0 &1 & -t & \dots & (-t)^{n-2} \\
  &  0 & 1 & \ddots & \vdots \\
 & &   & \ddots& -t \\
 &  & &   & 1\\ 
0 & &  &   & 0
\end{bmatrix}. \end{equation}
Here, $A_t$ is a nilpotent matrix of order $n$; these $A_t$ matrices are also called  KMS matrices and their numerical ranges have been studied by Gau and Wu in \cite{GauWu13, GauWu14}. Then any continuous function $\vec{x}(s)$ of form
\[ \vec{x}(s) =\begin{bmatrix}x_1(s) \\ \vdots \\ x_n(s)
\end{bmatrix}:[a,b] \subseteq \mathbb{R} \rightarrow \mathbb{S}^{n},\]
where $\mathbb{S}^n$ is the unit sphere in $\mathbb{C}^n$, give a resulting curve of points 
\begin{equation}\label{eqn:curvef} \left \langle A_t \vec{x}(s), \vec{x}(s) \right \rangle, \ \  s \in[ a,b]\end{equation}
in the numerical range $W(A_t)$. Using a carefully chosen $\vec{x}$, \cite{BG21} 
obtained a good approximating curve for the boundary of $W(A_t)$ and used it to both prove Theorem \ref{thm:pshds} and show that for $n=3,4,5$ there are matrices $X_t$ such that for all polynomials $p$,
\begin{equation} \label{eqn:cc2} \| p(A_t) \|  \le \| X_t \|  \|X_t^{-1}\| \max_{z\in W(A_t)} |p(z)|.\end{equation}
Moreover, using Mathematica estimates, the authors concluded that
 \begin{itemize}
 \item If $n=4$ and $t \in (0, 0.42)$, $\| X_t \|  \|X_t^{-1}\| \le 2,$
 \item  If $n=5$ and  $t \in (0.0001, 0.5)$,  $\| X_t \|  \|X_t^{-1}\| \le 2$,\end{itemize}
 which implies that for those $t$ ranges, Crouzeix's conjecture should hold for the associated $A_t$ matrices and hence for the $M_{\Theta_t}$ matrices. This current paper generalizes these results from \cite{BG21} in a number of ways.

\subsection{Main Results \& Paper Overview}

In this paper, we extend and generalize the results from Section 5 in \cite{BG21} discussed above by using carefully-chosen curves to approximate key numerical range boundaries.


First, in Section \ref{sec:uc}, we generalize Theorem \ref{thm:pshds} in two ways. In particular, we both study $\Theta$ of higher degree and look at $\Theta$ that no longer have a single repeated zero. Two of our main results are encoded in the following theorem, which appears later as Theorem \ref{thm:55} and Theorem \ref{thm:44}:
\begin{theorem} \label{thm:main} Let $t\in [0,1)$. Then $\Theta$ satisfies the pseudohyperbolic disk criterion if 
\[ 
\Theta(z) = \left( \frac{z-t}{1-tz}\right)^5 \ \ \text{ or } \ \ \Theta(z) = \left( \frac{z-t}{1-tz}\right)^3 \left( \frac{z-\sqrt{t}}{1-\sqrt{t}z}\right).\]
\end{theorem}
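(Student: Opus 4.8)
The plan is to verify the pseudohyperbolic disk criterion of Theorem~\ref{thm:phdcP} directly, by exhibiting in each case a Euclidean disk centered on the real axis that sits inside $W(M_\Theta)$ and whose pseudohyperbolic radius is at least $(\tfrac12)^{1/(n-1)}$, where $n=5$ in the first case and $n=4$ (degree $3+1$) in the second. The starting observation is that both matrices have real entries: for the first this is visible from the $A_t$ description in \eqref{eqn:At}, and for the second one reads off from \eqref{eqn:M} that every entry is a real product of square roots and powers of $t$. Hence each numerical range is symmetric about the real axis, and it suffices to search among disks with real center.

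For the first (single repeated zero) case I would exploit the relation $M_\Theta=tI+(1-t^2)A_t$ from \eqref{eqn:At}, so that $W(M_\Theta)=t+(1-t^2)W(A_t)$ is a genuine (orientation-preserving) affine image of the KMS numerical range. It is then enough to inscribe a real-centered Euclidean disk $D(c_0,R_0)\subseteq W(A_t)$ and to track its pseudohyperbolic radius under $w\mapsto t+(1-t^2)w$. A short computation with the real-axis endpoints $c\pm R$ of a real-centered Euclidean disk shows that its pseudohyperbolic radius $r$ obeys $\tfrac{r}{1+r^2}=\tfrac{R}{1-c^2+R^2}$; substituting $c=t+(1-t^2)c_0$ and $R=(1-t^2)R_0$ and factoring out $(1-t^2)$ yields
\[
\frac{r}{1+r^2}=\frac{R_0}{1-2tc_0+(1-t^2)(R_0^2-c_0^2)}.
\]
Since $s\mapsto s/(1+s^2)$ is increasing on $(0,1)$, the criterion collapses to a single inequality bounding this ratio below by $r^\ast/(1+(r^\ast)^2)$ with $r^\ast=(\tfrac12)^{1/4}$, for all $t\in[0,1)$. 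Crucially, the pseudohyperbolic blow-up near $\partial\mathbb{D}$ is exactly absorbed by the $(1-t^2)$ scaling, so the $t\to1$ limit is a finite condition governed by the fixed matrix $A_1$ obtained by setting $t=1$ in \eqref{eqn:At}, not a singularity.

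To produce the inscribed disk I would build a good inner approximation of $\partial W(A_t)$ from an explicit family of unit vectors $\vec{x}(s)$ as in \eqref{eqn:curvef}, chosen to exploit the upper-triangular Toeplitz structure of $A_t$ (it is a polynomial in the nilpotent Jordan block). Each point $\langle A_t\vec{x}(s),\vec{x}(s)\rangle$ lies in the convex set $W(A_t)$, so its convex hull does too; equivalently, every tangent line of the resulting curve furnishes a lower bound $\lambda_{\max}(\RE(e^{-i\theta}A_t))\ge \RE(e^{-i\theta}\langle A_t\vec{x}(s),\vec{x}(s)\rangle)$ on the support function. Matching a real center $c_0$ against these supporting lines determines an admissible radius $R_0=R_0(t)$, and the problem becomes the explicit inequality above in the two variables $s$ and $t$. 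The same scheme handles the second case, except that the mixed product no longer has the clean $tI+(1-t^2)A_t$ form; there I would write $M_\Theta$ out explicitly from \eqref{eqn:M}, work with $W(M_\Theta)$ directly, choose $\vec{x}(s)$ accordingly, and apply the identity $\tfrac{r}{1+r^2}=\tfrac{R}{1-c^2+R^2}$ to the resulting real-centered inscribed disk with target $r^\ast=(\tfrac12)^{1/3}$.

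The main obstacle is the final uniform-in-$t$ verification. The approximating curve only bounds the true boundary from the inside, so the radius $R_0(t)$ I can certify is smaller than the optimal one, and I must check that even this conservative disk still clears the threshold $(\tfrac12)^{1/(n-1)}$ across the entire range $t\in[0,1)$, with the worst behavior expected near $t\to1$ (extreme pseudohyperbolic distortion) and possibly near $t=0$. Making this rigorous, rather than relying on the Mathematica estimates used for \eqref{eqn:cc2}, will require either a sufficiently sharp closed-form choice of $\vec{x}(s)$ so that the one inequality reduces to elementary monotonicity and sign analysis, or a subdivision of $[0,1)$ into finitely many $t$-windows on each of which a cruder but certifiable bound suffices. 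Choosing $\vec{x}(s)$ well enough to keep the certified disk above threshold---particularly in the second case, where no analogue of the $tI+(1-t^2)A_t$ reduction is available---is where the real work lies.
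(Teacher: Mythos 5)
Your proposal follows essentially the same route as the paper: both cases are handled by taking the curve $\langle M\vec{x}(s),\vec{x}(s)\rangle$ for the explicit vector $\vec{x}(s)$ of \eqref{eqn:vec1}, inscribing a real-centered Euclidean disk $D(c,R)$ with $c=\tfrac{f(0)+f(\pi)}{2}$ in its convex hull by bounding $|f(s)-c|^2$ below, and converting to a pseudohyperbolic radius via exactly the identity \eqref{eqn:r} that you rederive (including the $tI+(1-t^2)A_t$ reduction in the repeated-zero case and the direct $M_\Theta$ computation in the mixed case). The one step you leave open --- the uniform-in-$t$ verification that $r(t)$ clears the threshold --- is carried out in the paper by the first of your two suggested options: with the fixed vector \eqref{eqn:vec1} the condition $r(t')=r^\ast$ is manipulated into a polynomial equation, all of whose roots are located and shown to lie outside the relevant interval, so an intermediate value theorem argument yields the contradiction; no subdivision of $[0,1)$ is needed.
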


This theorem immediately implies that the associated $M_\Theta$ matrices satisfy the level set Crouzeix conjecture. It is also somewhat surprising. Indeed,  work in \cite{BG21} suggested that the pseudohyperbolic disk criterion would be challenging to prove in the $n=5$ setting. Similarly, Example $6.1$ in \cite{BG21} showed that even simple degree-$2$ $\Theta$ can fail the pseudohyperbolic disk criterion. In contrast, Theorem \ref{thm:main} shows that fairly large classes of $\Theta$ (including $\Theta$ that do not just have a repeated zero) can still satisfy it.

Section \ref{sec:uc} includes additional information and results. Subsection \ref{sec:phd} gives an overview of the techniques (including curve construction) used throughout the remainder of the section. Subsection \ref{subsec:uni} includes both the proof of the first half of Theorem \ref{thm:main} and  studies $\Theta_t$ of form \eqref{eqn:theta} with $n=6,7,8,11.$ Here, strong numerical evidence suggests that for $t\in[0,1),$ all such $\Theta_t$ should satisfy the pseudohyperbolic disk criterion and hence, the level set Crouzeix conjecture. Meanwhile, Subsection \ref{subsec:3x3}  studies $\Theta$ with more than one (repeated) zero and specifically, considers the pseudohyperbolic disk criterion for degree-$3$ Blaschke products $\Phi_t$ with zeros at $t,t, t^{1/m}.$  Theorem \ref{thm: general} shows that for each $m \in \mathbb{N}$, there is a $t_m$ so that if $t\in (t_m,1)$, then $\Phi_{t}$ satisfies the pseudohyperbolic disk criterion and Theorem \ref{thm:n27} obtains explicit values for $t_m$ for the cases $m=2, \dots, 7$. 
These $t_m$ values are not optimal and heavily depend on the vectors used to generate the approximating curve; Remark \ref{rem:curves} includes a discussion of this phenomena and a selection of different vectors/curves adapted to different values of $m$. Subsection \ref{subsec:4x4} contains the proof of the second half of Theorem \ref{thm:main}.

In Section \ref{sec:At}, we investigate Crouzeix's conjecture directly for related nilpotent  matrices. Subsection \ref{sec:nil0} gives an overview of the key tools and techniques we need. In Subsection \ref{subsec:nil1}, we study the $A_t$ matrices in \eqref{eqn:At} and prove the following result.
\begin{theorem} For $n=4$ and $t\in [0,0.363]$, the matrix $A_t$ in \eqref{eqn:At} satisfies Crouzeix's conjecture. \end{theorem}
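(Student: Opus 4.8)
The plan is to realize $W(A_t)$ as a $K$-spectral set for $A_t$ with $K\le 2$ via the similarity framework behind \eqref{eqn:cc2}. Concretely, I would first produce a disk $\overline{D(0,R_t)}\subseteq W(A_t)$ centered at the origin (the sole eigenvalue of the nilpotent $A_t$) together with an invertible $X_t$ for which $X_tA_tX_t^{-1}$ is a contraction scaled to that disk, i.e. $\|X_tA_tX_t^{-1}\|\le R_t$. Given these, von Neumann's inequality applied to $X_tA_tX_t^{-1}$ on $\overline{D(0,R_t)}$, combined with the identity $p(A_t)=X_t^{-1}p(X_tA_tX_t^{-1})X_t$ and the inclusion $\overline{D(0,R_t)}\subseteq W(A_t)$, yields exactly \eqref{eqn:cc2}:
\[ \|p(A_t)\| \le \|X_t\|\,\|X_t^{-1}\|\max_{z\in\overline{D(0,R_t)}}|p(z)| \le \|X_t\|\,\|X_t^{-1}\|\max_{z\in W(A_t)}|p(z)|. \]
Thus the whole problem reduces to the two \emph{rigorous} inequalities $\|X_tA_tX_t^{-1}\|\le R_t$ and $\|X_t\|\,\|X_t^{-1}\|\le 2$ for every $t\in[0,0.363]$.

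For the disk inclusion I would bypass curve approximations and use the support function of the numerical range directly: for each direction $\theta$, $\max_{z\in W(A_t)}\RE(e^{-i\theta}z)=\lambda_{\max}(H_\theta)$, where $H_\theta:=\tfrac12(e^{-i\theta}A_t+e^{i\theta}A_t^*)$ is an explicit $4\times4$ Hermitian matrix with entries trigonometric in $\theta$ and polynomial in $t$. Since $A_t$ is nilpotent and $0$ lies in the interior of the convex set $W(A_t)$, the largest origin-centered disk inside $W(A_t)$ has radius $R_t=\min_\theta\lambda_{\max}(H_\theta)$, so $\overline{D(0,R)}\subseteq W(A_t)$ is equivalent to $R\le\min_\theta\lambda_{\max}(H_\theta)$. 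I would then construct an explicit (diagonal-type) similarity $X_t$, using the tools of Subsection \ref{sec:nil0}, arranged so that its largest singular value $\|X_tA_tX_t^{-1}\|$ is provably at most this $R_t$.

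The heart of the argument is the condition-number bound. Writing $\|X_t\|\,\|X_t^{-1}\|=\sqrt{\lambda_{\max}(X_t^*X_t)/\lambda_{\min}(X_t^*X_t)}$, I would compute the characteristic polynomial of the $4\times4$ positive-definite matrix $X_t^*X_t$ --- a quartic in $\lambda$ whose coefficients are explicit in $t$ --- and bound its extreme roots. The target $\|X_t\|\,\|X_t^{-1}\|\le 2$ is equivalent to $\lambda_{\max}\le 4\lambda_{\min}$, which I would clear of radicals and denominators to obtain a genuine polynomial inequality in $t$ and then verify on $[0,0.363]$. I expect $0.363$ to emerge precisely as a rigorous under-estimate of the point where $\|X_t\|\,\|X_t^{-1}\|=2$; the gap from the numerically observed $0.42$ mentioned after \eqref{eqn:cc2} reflects the conservatism of \emph{provable} (as opposed to merely computed) root bounds.

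The main obstacle is exactly this passage from pointwise numerics to inequalities valid on the whole continuum $t\in[0,0.363]$ (and, for the disk, all $\theta$). Both $R_t$ and the singular values of $X_t$ are roots of quartic characteristic polynomials whose coefficients vary with $t$, so there are no clean closed forms; the real work is to exhibit monotonicity of $\|X_t\|\,\|X_t^{-1}\|$ in $t$, or to certify the resulting polynomial inequalities by discriminant/Sturm-sequence analysis or a rigorous interval computation. Guaranteeing $\min_\theta\lambda_{\max}(H_\theta)\ge\|X_tA_tX_t^{-1}\|$ uniformly in $\theta$ is a second optimization of the same flavor, and reconciling it with the condition-number bound is what ultimately pins down the admissible range of $t$.
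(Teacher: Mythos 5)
Your reduction to an origin-centered disk is where the argument breaks, and it breaks quantitatively, not just technically. If $\|X_tA_tX_t^{-1}\|\le R_t$, then applying your own chain to $p(z)=z^3$ gives
\[
1=\|A_t^3\|=\|X_t^{-1}(X_tA_tX_t^{-1})^3X_t\|\le \|X_t\|\,\|X_t^{-1}\|\,R_t^3,
\]
since $(A_t^3)_{14}=1$ is the only nonzero entry of $A_t^3$. Hence any $X_t$ admissible for your scheme必 satisfies $\|X_t\|\,\|X_t^{-1}\|\ge R_t^{-3}$. But $R_t$, the radius of the largest origin-centered disk in $W(A_t)$, is at most $\lambda_{\max}\bigl(\tfrac12(A_t+A_t^*)\bigr)$, which equals $\cos(\pi/5)\approx 0.809$ at $t=0$ and decreases in $t$ (first-order rate $\approx -0.447$, as the coefficient $a_{4,2}$ in \eqref{eqn:Ct1} shows); already for $t\gtrsim 0.04$ one has $R_t<2^{-1/3}$ and therefore $\|X_t\|\,\|X_t^{-1}\|>2$ for \emph{every} invertible $X_t$, diagonal or not. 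At $t=0.363$ the forced lower bound is above $3$. So no choice of similarity can make your disk-based $K$-spectral-set argument yield the constant $2$ on $[0,0.363]$; the approach can only cover a sliver near $t=0$.

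The paper avoids this by not using a disk at all. It takes the curve $f(s)=\langle A_t\vec{x}(s),\vec{x}(s)\rangle$, reads it as a polynomial $F$ with $F(\mathbb{T})=C_t$, $F(0)=0$, $F'(0)\ne0$, so that $F(\mathbb{D})$ is a large non-circular region inside $W(A_t)$; it then sets $B_t=F^{-1}(A_t)$ (a polynomial in $A_t$ by nilpotency), writes $B_t=X_tJ_4X_t^{-1}$, and applies von Neumann to the \emph{composition} $p\circ F$ at the contraction $J_4$. Because $F(\mathbb{D})$ fills most of $W(A_t)$ rather than just an inscribed disk, the power obstruction above disappears, and the remaining task is exactly the one you correctly anticipated in your last two paragraphs: proving $\|X_t\|\,\|X_t^{-1}\|\le 2$ on a whole $t$-interval. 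There the paper shows $\|X_t\|=1$ and $\|X_t^{-1}\|\le 2$ for $t\in[0,0.363]$ by factoring $\det(X_t^*X_t-xI)$ into $(x-1)$ times an explicit cubic, using the trigonometric root formulas for a depressed cubic, and establishing monotonicity of the relevant parameter $G(t)$ by locating the real zeros of an auxiliary degree-$11$ polynomial --- which is the ``certify a polynomial inequality on a continuum'' step you foresaw. If you replace your disk inclusion by the map $F$ and the $p\circ F$ functional calculus, the rest of your plan aligns with the paper's proof.
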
 
This appears later as Corollary \ref{cor:44} and follows from various results we prove about the $X_t$ matrices appearing in \eqref{eqn:cc2}. These results turn the computational work from \cite{BG21} discussed after \eqref{eqn:cc2} into an analytic result. In Remark \ref{rem:68}, we do further computational work in the $n=6,7,8$ cases, which yields $t$-intervals where the associated $A_t$ matrices should satisfy Crouzeix's conjecture. Finally, we note that these arguments apply to related nilpotent matrices as well. Specifically, in Subsection \ref{subsec:nil2}, we apply them to $4\times 4$ nilpotent matrices of the form 
$$A_{t,m} = \left(
\begin{array}{cccc}
 0 & 1 & t & t^m \\
 0 & 0 & 1 & t \\
 0 & 0 & 0 & 1 \\
 0 & 0 & 0 & 0 \\
\end{array} \right)$$
for $m=2,3,4$ and obtain a number of results. For example, we prove that if $m=4$, $A_{t,m}$ satisfies Crouzeix's conjecture for $ t \in [0, 0.367]$. We also look at higher dimensional generalizations of these $A_{t,m}$ matrices. We expect that similar arguments could apply to larger classes of matrices and urge the interested reader to look for additional applications.

\section*{Acknowledgments}
Special thanks to Pamela Gorkin for useful insights and discussions during the writing of this paper. 
Additionally, the authors Kelly Bickel, Georgia Corbett, Annie Glenning, and Changkun Guan were partially supported by the National Science Foundation DMS grant \#2000088 during these research activities. Additional support was provided to Corbett, Glenning, and Guan by Bucknell University.


\section{Level Set Crouzeix Conjecture for Classes of $M_\Theta$} \label{sec:uc}
In this section, we use curves approximating the boundary of the numerical range, denoted $\partial W(M_\Theta),$ to show that several classes of $\Theta$ (each class parameterized by $t\in[0,1)$) satisfy the pseudohyperbolic disk criterion. Then $M_\Theta$ immediately satisfies the level set Crouzeix conjecture.

\subsection{Overview of Approach}  \label{sec:phd} As discussed near \eqref{eqn:curvef}, we 
can construct curves in a numerical range $W(A)$ by choosing a function  $\vec{x}(s): [a,b] \rightarrow \mathbb{S}^n$ and considering the  points $\left \langle A \vec{x}(s), \vec{x}(s) \right \rangle.$ 
For the $n\times n$ matrix $A_t$ as in \eqref{eqn:At}, we follow the work in \cite{BG21, HaagerupdelaHarpe} and often use  $\vec{x}(s)$ on $[0, 2\pi]$  defined component-wise by
\begin{equation} \label{eqn:vec1} x_\ell(s) = \sqrt{\tfrac{2}{n+1}} \sin\left( \tfrac{ \ell \pi}{n+1} \right) e^{i(\ell-1)s}, \quad \text{ for } 1 \le \ell \le n.\end{equation}
In \cite[Proposition 5.1]{BG21}, the authors studied the curve $C_t$ given by  $\left \langle A_t \vec{x}(s), \vec{x}(s) \right \rangle$ from \eqref{eqn:At} and $\vec{x}$ from \eqref{eqn:vec1} and showed that $C_t$ is parameterized by the function
\begin{equation} \label{eqn:Ct1} f_n(s) :=  \left \langle A_t \vec{x}(s), \vec{x}(s) \right \rangle= \sum_{k=1}^{n-1} a_{n,k} (-t)^{k-1}  e^{isk}, \qquad  s \in [0, 2\pi),\end{equation}
where each 
\[ a_{n,k} = \frac{1}{(n+1)\sin\left(\frac{\pi}{n+1}\right)}\left( (n-k) \cos\left( \tfrac{k \pi}{n+1}\right)\sin\left(\tfrac{\pi}{n+1}\right) +  \sin\left( \tfrac{\pi(n-k)}{n+1}\right)\right).\]
Then one can immediately conclude that $W(M_{\Theta_t})$ contains the set of points $t + (1-t^2) C_t$. This was the curve of points used in \cite{BG21} and we often use it in this paper as well. In the cases where $\Theta$ does not have a single repeated zero, $M_\Theta$ does not decompose into the sum of a multiple of $I$ and a nilpotent matrix $A_t$ as in \eqref{eqn:At}. In those settings, we often construct curves that approximate $ \partial W(M_\Theta)$ by using \eqref{eqn:curvef} directly with $M_\Theta$. Additionally, while we often use $\vec{x}(s)$ from \eqref{eqn:vec1},  other times different $\vec{x}$ functions give better approximations of $ \partial W(M_\Theta)$. 

Our proofs that certain $\Theta$ satisfy the pseudohyperbolic disk criterion will requite us to identify a large disk inside of each $W(M_\Theta)$. Our main results in this direction are Theorems \ref{thm:55},  \ref{thm:n27}, \ref{thm: general}, and \ref{thm:44}. Their proofs follow the same four step structure:\\

\begin{itemize}
    \item[1.] Use the construction detailed above to obtain a useful curve $C_t$ in $W(M_{\Theta})$ or $W(A_t)$ parameterized by a polynomial function $f(s)= \sum_{k=0}^n a_k(t) e^{ i\pi k s}$, for real coefficients $a_k(t)$. \\
    
    \item[2.] Find a disk inside of the convex hull of $C_t$ by first identifying a center $c(t)$ for the disk using the formula  $c(t) = \tfrac{ f(0) + f(\pi)}{2}$
     and then finding a radius $R(t)$ such that for all $s\in[0, 2 \pi],$
    \begin{equation} \label{eqn:fcR} |f(s) - c(t) |^2 \ge R(t)^2.\end{equation}
    Since $C_t$ starts at $f(0)$, goes through $f(\pi)$, ends at $f(2\pi)= f(0)$, and is symmetric across the $x$-axis, \eqref{eqn:fcR} implies the Euclidean disk with center $c(t)$ and radius $R(t)$ is in the convex hull of $C_t$. \\
    
    \item[3.] Use Step $2$ to identify a large Euclidean disk in $W(M_\Theta)$.  If $C_t \subseteq W(M_\Theta)$, we use $D(c(t), R(t))$ and if $C_t \subseteq W(A_t)$, we use $D(t +(1-t^2)c(t), (1-t^2) R(t)),$ where we are using the notation for a Euclidean disk from \eqref{eqn:diskform}. Because that Euclidean disk in $\mathbb{D}$ is also a pseudohyperbolic disk, we can find its pseudohyperbolic radius $r(t).$ We then show $r(t)$ is large enough for $\Theta$ to satisfy the pseudohyperbolic disk criterion on a given interval of $t$-values. \\
    \end{itemize}

As an aside, converting a Euclidean disk $D(c,R)$ to a pseudohyperbolic disk $D_\rho(z_0,R)$ requires some work. If $c = 0$, then $D(c,R) = D_\rho(0,R)$. If $c \ne 0$, then $D(c,R)$ is equal to $D_\rho(z_0, r)$, where $r$ is the unique solution in $[0,1)$ of 
\begin{equation} \label{eqn:r}  r + \tfrac{1}{r} = \frac{R^2 -|c|^2 +1}{R}\end{equation}
and $z_0$ satisfies $\arg z_ 0 = \arg c$ and $|z_0|$ is the unique solution in $[0,1)$ of 
\[ |z_0| + \tfrac{1}{|z_0|} = \frac{ |c|^2 -R^2 +1}{|c|}.\]
These formulas appear, for example, in \cite{MortiniRupp2021}.

\subsection{Blaschke products with a repeated zero} \label{subsec:uni}
In this subsection, we consider Blaschke products with a single repeated zero at $t\in [0,1)$.  We first extend Theorem \ref{thm:pshds} to the $n= 5$ case.

\begin{theorem} \label{thm:55}
Let $\Theta_t$ be a  Blaschke product of the form \eqref{eqn:unicritical} with $\deg \Theta_t = 5$. Then $W(M_{\Theta_t})$ always contains a pseudohyperbolic disk of radius $\frac{\sqrt{3}}{2}$.
\end{theorem}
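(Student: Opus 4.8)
The plan is to follow the four-step structure laid out in Subsection~\ref{sec:phd}, specialized to $n=5$. First I would write $M_{\Theta_t} = tI + (1-t^2)A_t$ as in \eqref{eqn:At}, so that it suffices to analyze the numerical range of the single nilpotent matrix $A_t$ and then transport the result. Applying \eqref{eqn:Ct1} with $n=5$, the curve $C_t$ in $W(A_t)$ generated by the vector $\vec{x}$ from \eqref{eqn:vec1} is parameterized by
\[
f_5(s) = \sum_{k=1}^{4} a_{5,k}(-t)^{k-1} e^{isk},
\]
with the four explicit coefficients $a_{5,k}$ obtained by plugging $n=5$ into the stated formula. These involve only $\sin$ and $\cos$ at multiples of $\pi/6$, so they reduce to clean closed-form constants (for instance $\sin(\pi/6)=1/2$, $\cos(\pi/6)=\sqrt3/2$), which is presumably where the target radius $\sqrt3/2$ ultimately comes from.

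Next I would carry out Step~2: set the candidate center $c(t) = \tfrac{1}{2}\big(f_5(0)+f_5(\pi)\big)$, which is a real number since all the $a_{5,k}$ are real and the exponentials at $s=0$ and $s=\pi$ are real. The heart of the argument is then establishing the radius inequality \eqref{eqn:fcR}, namely $|f_5(s)-c(t)|^2 \ge R(t)^2$ for all $s\in[0,2\pi]$, with $R(t)$ chosen as large as possible. Writing $f_5(s)-c(t)$ in terms of $e^{isk}$ and separating real and imaginary parts, $|f_5(s)-c(t)|^2$ becomes a trigonometric polynomial in $s$ whose coefficients are polynomials in $t$; minimizing it over $s$ (equivalently, in the variable $u=\cos s$) yields $R(t)^2$. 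The symmetry of $C_t$ across the real axis, noted in Step~2, guarantees that the Euclidean disk $D(c(t),R(t))$ actually sits inside the convex hull of $C_t$, hence inside $W(A_t)$.

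Finally, Step~3 transports this disk: $W(M_{\Theta_t})$ contains $D\big(t+(1-t^2)c(t),\,(1-t^2)R(t)\big)$, a Euclidean disk in $\mathbb{D}$, which I convert to a pseudohyperbolic disk of radius $r(t)$ via \eqref{eqn:r}. The claim is that $r(t)\ge \tfrac{\sqrt3}{2}$ for every $t\in[0,1)$; since $(\tfrac12)^{1/(n-1)} = (\tfrac12)^{1/4} < \tfrac{\sqrt3}{2}$, this would be more than enough to invoke Theorem~\ref{thm:phdcP} and conclude the pseudohyperbolic disk criterion. I expect the main obstacle to be Step~2: showing the minimum over $s$ of $|f_5(s)-c(t)|^2$ is bounded below by a quantity large enough, uniformly in $t$, to survive the Euclidean-to-pseudohyperbolic conversion and clear the $\sqrt3/2$ threshold. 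This requires controlling a degree-four trigonometric polynomial in $s$ with $t$-dependent coefficients across the entire range $t\in[0,1)$, and the endpoint behavior as $t\to 1^-$ (where the disk can shrink in Euclidean terms even as the pseudohyperbolic radius is what matters) is the delicate part that must be checked carefully.
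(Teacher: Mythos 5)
Your proposal follows exactly the paper's route: the decomposition $M_{\Theta_t}=tI+(1-t^2)A_t$, the curve $f_5$ from \eqref{eqn:Ct1}, the center $c(t)=\tfrac12(f_5(0)+f_5(\pi))$, the substitution $x=\cos s$ to bound $|f_5(s)-c(t)|^2$ below by $R(t)^2=\tfrac{t^4}{12}+\tfrac{t^2}{2}+\tfrac34$, and the Euclidean-to-pseudohyperbolic conversion via \eqref{eqn:r}. The one step you flag as delicate but do not execute---the uniform bound $r(t)>\tfrac{\sqrt3}{2}$ on $(0,1)$, which is indeed tight as $t\to 1^-$---is closed in the paper by an intermediate value theorem contradiction argument that reduces the equation $r(t')=\tfrac{\sqrt3}{2}$ to locating the roots of an explicit degree-$10$ polynomial, none of which lie in $(0,1)$.
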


\begin{proof} We follow the structure from Section \ref{sec:phd}. First, using \eqref{eqn:Ct1}, we obtain a curve $C_t$ in $W(A_t)$ where the points on $C_t$ are given by
$f(\text{s})=\frac{\sqrt{3} }{2} e^{i s}-\frac{7 t}{12}e^{i 2 s}+\frac{t^2}{2\sqrt{3}}e^{i 3 s}-\frac{t^3}{12} e^{i 4 s}$ for $s\in [0,2\pi]$. Then we identify the center of a disk in $W(A_t)$ by
\[ c(t)  =\tfrac{ f(0) + f(\pi)}{2}= -\frac{t}{12}\left(t^2+7\right).\]
To find the radius of the disk, we consider
\begin{align} \nonumber
 \left \vert f(s)+\frac{t}{12} \left(t^2+7\right) \right \vert^2 &= \left \vert -\frac{1}{12} t^3 \cos (4 s)+\frac{t^2 \cos (3 s)}{2 \sqrt{3}}-\frac{7}{12} t \cos (2 s)+\frac{1}{2} \sqrt{3} \cos (s)+\frac{1}{12} t \left(t^2+7\right) \right \vert^2 \\
 &+ \left \vert -\frac{1}{12} t^3 \sin (4 s)+\frac{t^2 \sin (3 s)}{2 \sqrt{3}}-\frac{7}{12} t \sin (2 s)+\frac{1}{2} \sqrt{3} \sin (s) \right \vert^2. \label{eqn: function 1} 
\end{align}
Setting $x=\cos(s)$, using the identities
\begin{align*}
    \cos(2s) &= 2\cos ^2(s)-1\\
    \cos(3s) &= 4\cos^3(s)-3 \cos (s)\\
    \cos(4s)&= 8 \cos ^4(s)-8 \cos ^2(s)+1,
\end{align*}
and simplifying, we can conclude that the right side of  \eqref{eqn: function 1} equals
\begin{align*}
    & \left(\frac{t^4}{12}+\frac{t^2}{2}+\frac{3}{4}\right)+\frac{1}{36} t^2 \left(1-x^2\right) \left(4 t^4 x^2+28 t^2 x^2-4 \sqrt{3} t^3 x-16 \sqrt{3} t x+13\right) 
    = \left(\frac{t^4}{12}+\frac{t^2}{2}+\frac{3}{4}\right)+ g(t,x).
\end{align*}
Using standard calculus computations, one can show that $g(t,x)\geq 0$ on $[0,1]\times [-1,1]$. 
and so, we can set $R(t)^2= \frac{t^4}{12}+\frac{t^2}{2}+\frac{3}{4}.$ Then
we have  $$D\left(t-(1-t^2)\frac{t}{12}\left(t^2+7\right),\left(1-t^2\right) \sqrt{\frac{t^4}{12}+\frac{t^2}{2}+\frac{3}{4}}\right )\subseteq W(M_{\Theta_t}).$$
 This Euclidean disk is also a pseudohyperbolic disk $D_\rho(z_0(t), r(t))$. By solving \eqref{eqn:r} for $r(t)$,
we obtain 
\begin{align}
    r(t)= \frac{g_1(t)-\sqrt{g_2(t)}}{g_3(t)},
\end{align}
where 
\begin{align*}
    g_1(t)&= \sqrt{3} t^8+\sqrt{3} t^6-\sqrt{3} t^4+83 \sqrt{3} t^2+252 \sqrt{3} \\
    g_2(t)&=3 t^{16}+6 t^{14}-3 t^{12}+492 t^{10}+2013 t^8+1014 t^6-1581 t^4+1080 t^2+3888\\
    g_3(t)&=144 \left(t^2+3\right).
\end{align*}
One can check that $r(0) = \frac{\sqrt{3}}{2}$ and we will prove $r(t)> \frac{\sqrt{3}}{2}$  for  $t\in (0,1)$.  Proceeding towards a contradiction, 
suppose there exists a $t_*\in (0,1)$ such that $r(t_*)< \frac{\sqrt{3}}{2}$. Since $r(t)$ is continuous on $(0,1)$ and $r(0.5)\approx 0.873 >  \frac{\sqrt{3}}{2},$ the intermediate value theorem gives a $t' \in (0,1)$ with $r(t') =  \frac{\sqrt{3}}{2}$.
Then $t'$ also satisfies
\[ \left( \frac{\sqrt{3}}{2} g_3(t') -g_1(t') \right)^2 = g_2(t').\]
Moving everything to the left side of that equation and simplifying gives $p(t')=0$, where $p$ is the degree $10$ polynomial $$p(t)=-t^{10}-4 t^8-2 t^6+4 t^4+3 t^2.$$ One can easily check that the $10$ zeros of $p$ are $-i,-i, 0,0, i, i$ and $-1,1, -i\sqrt{3}, i \sqrt{3}$. Since we assumed $t'\in (0,1)$ was another zero, this gives our contradiction. Thus, $r(t)> \frac{\sqrt{3}}{2}$ for all $t\in (0,1)$ and so, $W(M_{\Theta_t})$ contains a pseudohyperbolic disk of at least radius $\frac{\sqrt{3}}{2}$.
\end{proof}

Since $\frac{\sqrt{3}}{2}> \left(\frac{1}{2}\right)^{(1/4)}$, Theorem \ref{thm:55} implies that degree-$5$ $\Theta_t$ of the form \eqref{eqn:unicritical} satisfy the pseudohyperbolic disk criterion. This $n=5$ case is also illustrated by Figure \ref{fig:phd}, which shows a particular $W(M_{\Theta_t})$, the shifted curve $t +(1-t)^2 C_t$, and a pseudohyperbolic disk with radius $\left(\frac{1}{2}\right)^{(1/4)}$ inside of that curve.

\begin{figure}[h!] 
    \subfigure[$n=5$]
      {\includegraphics[width=0.42 \textwidth]{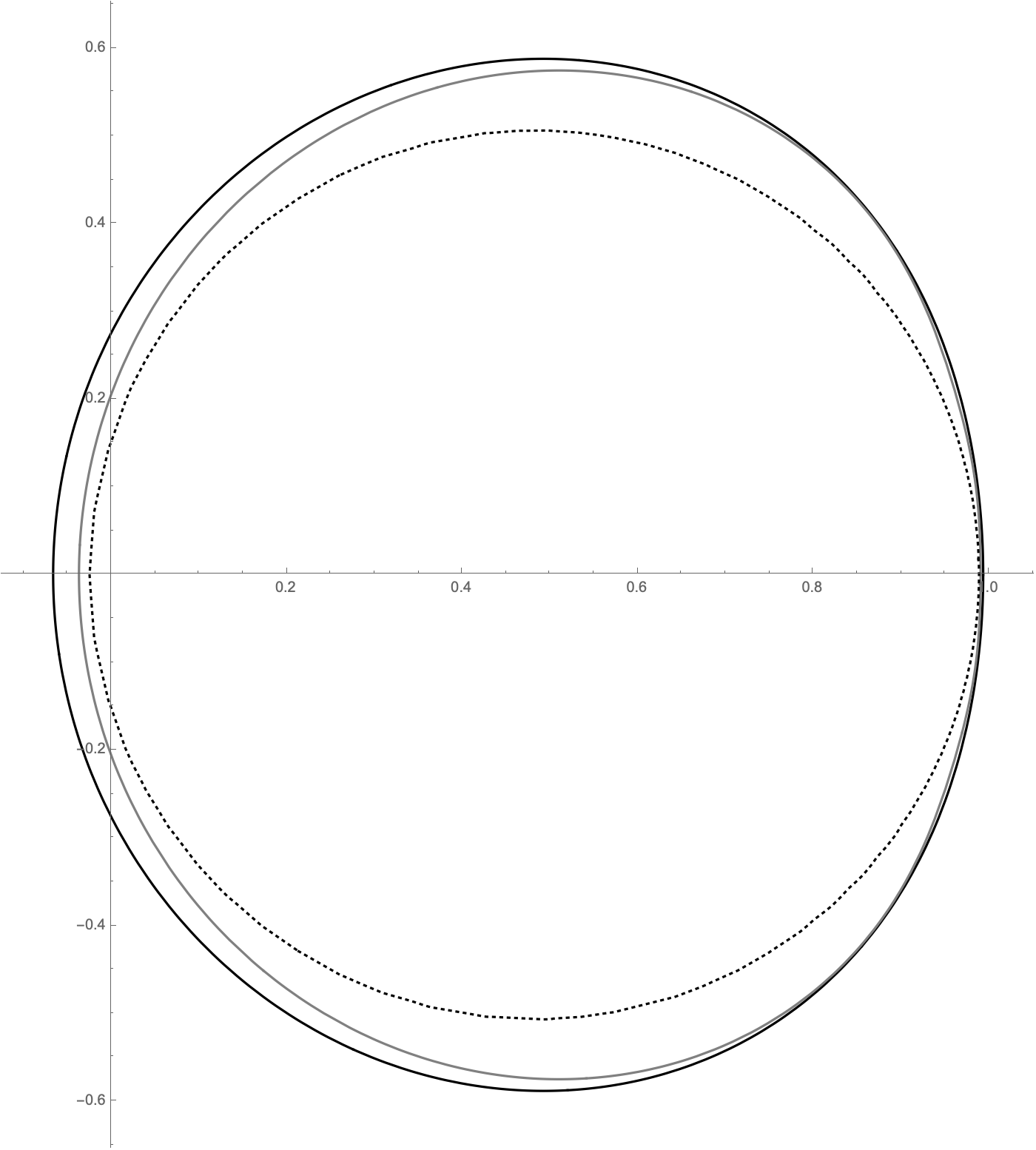}}
    \quad 
    \subfigure[$n=6$]
      {\includegraphics[width=0.42 \textwidth]{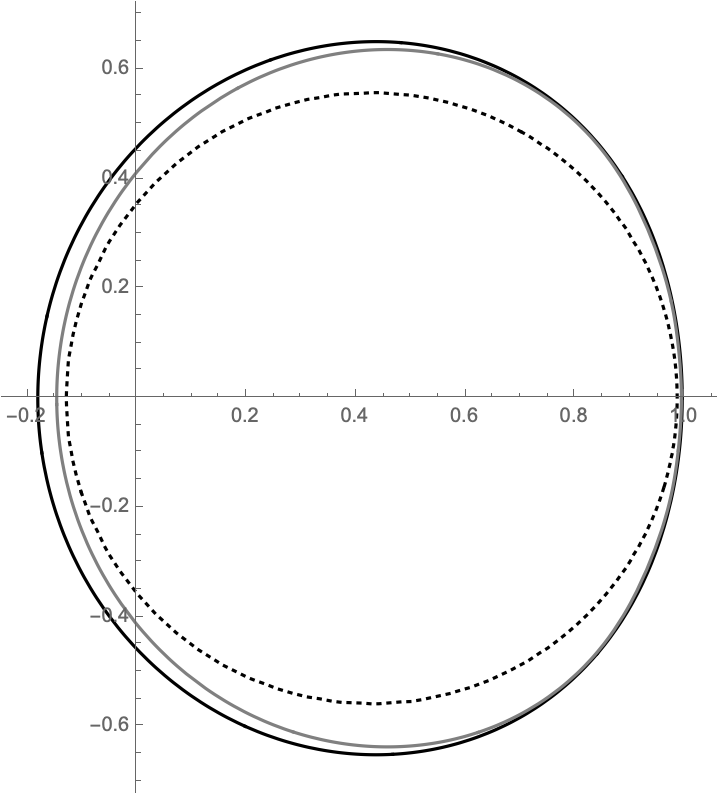}}
  \caption{\textsl{For $t=0.7$ and two values of $n$, $W(M_{\Theta_t})$ (black), the curve $t +(1-t^2)C_t$ (gray), and a pseudohyperbolic disk with radius $(\frac{1}{2})^{1/(n-1)}$ (dashed).}} \label{fig:phd}
\end{figure}

It is worth noting that Theorem \ref{thm:phdcP} gives an immediate corollary of Theorem \ref{thm:55}.

\begin{corollary} Let $\Theta_t$ be a  Blaschke product of the form \eqref{eqn:unicritical} with $\deg \Theta_t = 5$. Then $M_{\Theta_t}$ satisfies the level set Crouzeix conjecture. \end{corollary}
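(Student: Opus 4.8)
The plan is to deduce this corollary directly from the two structural results already in hand, namely Theorem \ref{thm:55} and the pseudohyperbolic disk criterion (Theorem \ref{thm:phdcP}); no fresh estimate is needed. First I would invoke Theorem \ref{thm:55} to produce, for each fixed $t \in [0,1)$, a point $z_0 \in \mathbb{D}$ such that $D_\rho(z_0, \tfrac{\sqrt{3}}{2}) \subseteq W(M_{\Theta_t})$. This is the only place where the geometry of the numerical range enters, and all of that analytic work has already been carried out.

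Next I would compare this radius against the threshold demanded by Theorem \ref{thm:phdcP}. Since $\deg \Theta_t = 5$, that criterion requires a pseudohyperbolic disk of radius $(\tfrac{1}{2})^{1/(n-1)} = (\tfrac{1}{2})^{1/4}$. Because pseudohyperbolic disks sharing a common center are nested by radius, it suffices to verify $\tfrac{\sqrt{3}}{2} > (\tfrac{1}{2})^{1/4}$; raising both sides to the fourth power reduces this to $\tfrac{9}{16} > \tfrac{1}{2}$, which is immediate. Consequently $D_\rho(z_0, (\tfrac{1}{2})^{1/4}) \subseteq D_\rho(z_0, \tfrac{\sqrt{3}}{2}) \subseteq W(M_{\Theta_t})$, so $\Theta_t$ satisfies the pseudohyperbolic disk criterion.

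Finally, I would apply Theorem \ref{thm:phdcP} with $n = 5$ to conclude that $W(M_{\Theta_t}) \not\subseteq \Omega^B_{1/2}$ for every finite Blaschke product $B$ with $\deg B < 5$. This non-containment statement is exactly the assertion that $M_{\Theta_t}$ satisfies the level set Crouzeix conjecture, which completes the argument. The only genuine step is the elementary inequality $\tfrac{\sqrt{3}}{2} > (\tfrac{1}{2})^{1/4}$, so there is no real obstacle to anticipate here: the difficulty was entirely front-loaded into establishing the $\tfrac{\sqrt{3}}{2}$ bound in Theorem \ref{thm:55}, and the corollary is a routine packaging of that bound with the pseudohyperbolic disk criterion.
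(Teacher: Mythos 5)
Your proposal is correct and follows exactly the paper's route: Theorem \ref{thm:55} supplies a pseudohyperbolic disk of radius $\tfrac{\sqrt{3}}{2}$, the elementary check $\left(\tfrac{\sqrt{3}}{2}\right)^4 = \tfrac{9}{16} > \tfrac{1}{2}$ shows this exceeds the threshold $\left(\tfrac{1}{2}\right)^{1/4}$, and Theorem \ref{thm:phdcP} then yields the level set Crouzeix conjecture. The paper treats this as an immediate consequence in precisely the same way, so there is nothing to add.
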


 Figure \ref{fig:phd} also illustrates the $n=6$ case. Based on the figure, it looks like we can a find pseudohyperbolic disk with radius $\left(\frac{1}{2}\right)^{(1/5)}$  inside of $t +(1-t^2)C_t \subseteq W(M_{\Theta_t})$. We explore higher dimensional cases like that in the following remark. We actually look for pseudohyperbolic radii $r= \cos (\frac{\pi}{n+1})$, since that formula aligns with the value from Theorem \ref{thm:55} and is at least $\left(\frac{1}{2}\right)^{1/(n-1)}$ in our setting.

\begin{remark} \label{rmk:11}
Let $\Theta_t$ be a Blaschke product of the form \eqref{eqn:unicritical} with $\deg \Theta_t = n >5$.
In this case, we can still approximate the boundary of $W(A_t)$ with the curve $C_t$ parameterized by $f(s)=f_n(s)$ from \eqref{eqn:Ct1}. In general, \eqref{eqn:fcR} is not easily simplified so we are no longer able to identify a nice radius function $R(t)$ and proceed as in proof for Theorem \ref{thm:55}.
Instead, we define $c(t)$ as before but rephrase the question to ask if a disk with pseudohyperbolic radius $r= \cos (\frac{\pi}{n+1})$ and Euclidean center $\hat{c_t}=t+(1-t^2)c(t)$ is within the convex hull of $t+(1-t^2)C_t$. It turns out that this question can be explored with Mathematica.
 
 We investigated the cases where $n=6,7,8,11$ and in each case, Mathematica computations suggest that $\Theta_t$ should satisfy the pseudohyperbolic disk criterion. Because the formulas involve $\cos \left(\frac{\pi}{n+1}\right)$, the $n=11$ case is actually the simplest and we include those details below.  The other cases are similar.
 
 First, using the curve $C_t$ parameterized by $f(s)=f_{11}(s)$ found in \eqref{eqn:Ct1}, one can deduce that
$$ c(t)= \frac{1}{24} t \left(\left(\sqrt{3}-2\right) t^8+\left(1-2 \sqrt{3}\right) t^6-2 \left(\sqrt{3}+2\right) t^4-\left(2 \sqrt{3}+11\right) t^2-11 \sqrt{3}-2\right).$$
Then if we consider a disk with Euclidean center $\hat{c_t}=t+(1-t^2)c(t)$ and pseudohyperbolic radius $r(t)= \cos (\frac{\pi}{12})$, one can use  \eqref{eqn:r} to solve for the Euclidean radius of that disk. That gives 
 \begin{align*}
     R_{11}(t)=& \frac{36 \sqrt{2}+6 \sqrt{6}-\sqrt{h(t)}}{24 \left(\sqrt{3}+1\right)}
 \end{align*}
where 
\begin{align*}
    h(t)=& \left(4-2 \sqrt{3}\right) t^{22}+\left(12 \sqrt{3}-12\right) t^{20}+56 t^{18}+\left(60 \sqrt{3}+88\right) t^{16}+\left(170 \sqrt{3}+148\right) t^{14}\\
    &+\left(96 \sqrt{3}+584\right) t^{12}+\left(410 \sqrt{3}+244\right) t^{10}+\left(252 \sqrt{3}+472\right) t^8+632 t^6\\
    &+\left(396 \sqrt{3}-396\right) t^4+\left(484-242 \sqrt{3}\right) t^2-288 \sqrt{3}+504.
\end{align*}
We wish to show that the  disk with pseudohyperbolic radius $r= \cos (\frac{\pi}{12})$ and Euclidean center \\$\hat{c_t}=t+(1-t^2)c(t)$ is within the convex hull of $t+(1-t^2)C_t$.
That will follow if we show that \eqref{eqn:fcR} holds with $R(t)=\frac{R_{11}(t)}{1-t^2}$. 
Using the Mathematica Minimize command and corresponding plots, one can obtain strong evidence to suggest that this inequality holds for $0\leq t\leq 0.99$. Indeed, this inequality appears to hold for $t\in [0,1)$, but the Mathematica Minimize command becomes less stable as $t$ approaches $1$ because we are dividing by $1-t^2$. So when $n=11$, this evidence indicates that  $W(M_{\Theta_t})$ contains a pseudohyperbolic disk with radius $\cos\left (\frac{\pi}{12}\right )$. We observed analogous behavior for the cases $n=6,7,8.$
\end{remark}

Theorem \ref{thm:55} coupled with Remark \ref{rmk:11} suggests the following conjecture.

\begin{conjecture} If $t \in [0, 1)$ and $\Theta_t$ is of form \eqref{eqn:unicritical} for any value of $n \ge 3$, then $W(\Theta_t)$ contains a pseudohyperbolic disk of radius $\cos\left( \frac{\pi}{n+1}\right)$.
\end{conjecture}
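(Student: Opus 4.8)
The plan is to run the four-step framework of Section~\ref{sec:phd} for arbitrary $n$, anchoring everything at the sharp case $t=0$. First I would take the curve $C_t\subseteq W(A_t)$ parameterized by $f_n(s)$ from \eqref{eqn:Ct1}, put $c(t)=\tfrac{f_n(0)+f_n(\pi)}{2}$ (a real polynomial in $t$, since the $a_{n,k}$ are real), and pass to the shifted disk in $W(M_{\Theta_t})$ via $t+(1-t^2)C_t$. The key anchoring observation is that the conjecture is sharp and exactly attained at $t=0$: using $\sin\!\big(\tfrac{\pi(n-1)}{n+1}\big)=2\sin\!\big(\tfrac{\pi}{n+1}\big)\cos\!\big(\tfrac{\pi}{n+1}\big)$ one computes $a_{n,1}=\cos\!\big(\tfrac{\pi}{n+1}\big)$, so $f_n(s)|_{t=0}=\cos\!\big(\tfrac{\pi}{n+1}\big)e^{is}$ is precisely the circle of radius $\cos\!\big(\tfrac{\pi}{n+1}\big)$ bounding $W(A_0)$ (the Haagerup--de la Harpe value, see \cite{HaagerupdelaHarpe}). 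Thus $c(0)=0$, the inscribed disk has radius exactly $\cos\!\big(\tfrac{\pi}{n+1}\big)$, and the target pseudohyperbolic radius is met with equality for every $s$. This also matches the established cases, since $\cos\!\big(\tfrac{\pi}{4}\big)=2^{-1/2}$ and $\cos\!\big(\tfrac{\pi}{6}\big)=\tfrac{\sqrt3}{2}$ recover the radii from Theorems~\ref{thm:pshds} and~\ref{thm:55}.

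Because, as Remark~\ref{rmk:11} warns, \eqref{eqn:fcR} no longer admits a clean closed-form radius $R(t)$ for $n>5$, I would instead adopt the target-radius reformulation used there. Fixing $\rho_n:=\cos\!\big(\tfrac{\pi}{n+1}\big)$, one inverts \eqref{eqn:r} to obtain the Euclidean radius $R_n(t)$ of the pseudohyperbolic disk of radius $\rho_n$ centered at $\hat c_t:=t+(1-t^2)c(t)$, and the entire conjecture reduces to the single two-variable inequality
\[
|f_n(s)-c(t)|^2\;\ge\;\Big(\tfrac{R_n(t)}{1-t^2}\Big)^2\qquad\text{for all }s\in[0,2\pi],\ t\in[0,1).
\]
Writing $x=\cos s$ and expanding via the Chebyshev identities turns this into a polynomial positivity statement $P_n(x,t)\ge 0$ on $[-1,1]\times[0,1)$. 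Two structural facts should guide the analysis. First, $z\mapsto -z$ is a disk automorphism with $W(M_{\Theta_{-t}})=-W(M_{\Theta_t})$ (indeed $A_{-t}=-UA_tU^\star$ for $U=\mathrm{diag}(1,-1,1,\dots)$), so the inscribed pseudohyperbolic radius $r(t)$ is even in $t$; since $r(0)=\rho_n$, the behavior near $t=0$ is controlled by the sign of the $t^2$-term. Second, the $t=0$ sharpness gives $P_n(x,0)\equiv 0$. For $n=5$ these combine into the clean factor $g(t,x)=\tfrac{1}{36}t^2(1-x^2)(\cdots)$, and for $n=11$ the inequality is exactly what the \texttt{Minimize} command of Remark~\ref{rmk:11} tests numerically.

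I expect the main obstacle to be establishing $P_n\ge 0$ uniformly in $n$, and I see two compounding difficulties. First, the degrees of $P_n$ in $x$ and $t$ grow linearly with $n$ and its coefficients carry the algebraic numbers $\cos\!\big(\tfrac{\pi}{n+1}\big)$, so there is no single elementary factorization playing the role of $p(t)=-t^2(t^2-1)(t^2+1)^2(t^2+3)$ that closes the $n=5$ argument; one would instead need a \emph{family} of positivity certificates (for instance a sum-of-squares or Bernstein-type decomposition of $P_n/t^2$ on the box, or a monotonicity estimate showing $r(t)$ never drops below its $t=0$ value $\rho_n$) producible for all $n$ at once. Second, it is not clear that the closest approach of $C_t$ to $c(t)$ remains at the real-axis crossings $s=0,\pi$ when $n$ is large and $t$ is near $1$: the approximating curve can acquire additional lobes, so the minimizing $x$ in $P_n(x,t)$ may leave $\{-1,1\}$ and the inscribed disk's geometry may change. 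A realistic route is to prove the statement first for the infinite subfamily of $n$ where $\cos\!\big(\tfrac{\pi}{n+1}\big)$ is a convenient surd (as the paper exploits for $n=3,5,11$), combining the $t=0$ anchoring with a leading-order expansion of $P_n$ near $t=0$ and a separate compactness bound keeping $r(t)$ above $\rho_n$ on intervals $t\in[\delta,1-\delta]$; upgrading this to a genuinely uniform-in-$n$ argument is where a new idea is needed.
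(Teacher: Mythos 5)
The statement you were asked to prove is stated in the paper only as a conjecture: the paper itself offers no proof, just the cases $n=3,4$ (Theorem \ref{thm:pshds}), $n=5$ (Theorem \ref{thm:55}), and numerical evidence for $n=6,7,8,11$ (Remark \ref{rmk:11}). Your proposal follows exactly the paper's four-step framework from Subsection \ref{sec:phd}, and your anchoring computation is correct and worth keeping: using $\sin\bigl(\tfrac{\pi(n-1)}{n+1}\bigr)=2\sin\bigl(\tfrac{\pi}{n+1}\bigr)\cos\bigl(\tfrac{\pi}{n+1}\bigr)$ one does get $a_{n,1}=\cos\bigl(\tfrac{\pi}{n+1}\bigr)$, so at $t=0$ the curve $f_n$ degenerates to the Haagerup--de la Harpe circle and the conjectured radius is attained with equality; your symmetry observation $A_{-t}=-UA_tU^\star$ and the factorization $p(t)=-t^2(t^2-1)(t^2+1)^2(t^2+3)$ of the degree-$10$ polynomial from Theorem \ref{thm:55} also check out.

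The genuine gap is the one you name yourself: the reduction to the positivity statement $P_n(x,t)\ge 0$ on $[-1,1]\times[0,1)$ is only a reformulation, and your proposal establishes that inequality for no value of $n$ beyond those the paper already handles. Everything after the reduction is a description of what a proof would need (a family of positivity certificates uniform in $n$, or a monotonicity bound keeping $r(t)\ge\rho_n$), not an argument. In particular, the two difficulties you flag are real: the coefficients involve $\cos\bigl(\tfrac{\pi}{n+1}\bigr)$, so the contradiction-via-polynomial-roots device of Theorems \ref{thm:55}, \ref{thm:n27}, and \ref{thm:44} produces a different algebraic computation for each $n$ and does not scale; and there is no a priori reason the minimum of $|f_n(s)-c(t)|$ stays at $s\in\{0,\pi\}$ for large $n$ and $t$ near $1$, which is precisely why Remark \ref{rmk:11} retreats to Mathematica's \texttt{Minimize}. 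So the proposal is a reasonable research plan that correctly identifies where a new idea is required, but it is not a proof, and the statement remains open exactly where the paper leaves it.
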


Since $\left(\frac{1}{2}\right)^{1/(n-1)} < \cos\left( \frac{\pi}{n+1}\right)$ for each $n \ge 3$, this conjecture would imply that such $\Theta_t$ satisfy the pseudohyperbolic disk criterion and hence, the level set Crouzeix conjecture.


\subsection{Other Blaschke products: the $3 \times 3$ case} \label{subsec:3x3}
In the next two subsections, we show that several classes of Blaschke products beyond those studied in \cite{BG21} also satisfy the pseudohyperbolic disk criterion and hence, the level set Crouzeix conjecture.

In this subsection, we consider functions of the following form, with a repeated zero at $t \in[0,1)$ and an additional zero at $t^{1/m}$, for some positive integer $m$:
\begin{equation} \label{eqn:thetat2}  \Phi_t(z):= \left(\frac{z-t}{1-t z}\right)^2\left(\frac{z-t^{1/m}}{1-t^{1/m}z}\right).\end{equation}

For such functions, we have the following result. 

\begin{theorem} \label{thm:n27}
Let $\Phi_t$ be a degree-three Blaschke product of the form \eqref{eqn:thetat2}. Then for each integer $m$ with $2 \le m \le 7$, there is a specific $t_m \in [0,1)$ such that for each $t\in (t_m,1)$, $W(M_{\Phi_t})$ contains a pseudohyperbolic disk with radius $\frac{1}{\sqrt{2}}$. These values of $t_m$ are given in the following table:
\[
\renewcommand*{\arraystretch}{1.2}
\begin{tabular}{||c | l||} 
 \hline
 \hspace{0.1cm} $m$  \hspace{0.1cm} & \hspace{0.25cm} $t_m$ \hspace{0.25cm} \\ [0.5ex]
 \hline
  2 &  \hspace{0.25cm} 0.11  \hspace{0.25cm}\\ 
 \hline
 3 &\hspace{0.25cm} 0.27\hspace{0.25cm} \\ 
 \hline
 4 & \hspace{0.25cm} 0.41 \hspace{0.25cm}\\ 
 \hline
 5 &\hspace{0.25cm} 0.50\hspace{0.25cm} \\ 
 \hline
 6 & \hspace{0.25cm} 0.57 \hspace{0.25cm}\\ 
 \hline
 7 & \hspace{0.25cm} 0.62 \hspace{0.25cm}\\ 
 \hline
\end{tabular}
\]
\end{theorem}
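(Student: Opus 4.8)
The plan is to follow the four-step template of Section \ref{sec:phd}, working directly with $M_{\Phi_t}$ rather than with a nilpotent summand, since $\Phi_t$ has two distinct zeros and so $M_{\Phi_t}$ does not decompose as in \eqref{eqn:At}. First I would write down $M_{\Phi_t}$ explicitly from \eqref{eqn:M} using the zero list $t,t,t^{1/m}$: it is upper triangular with diagonal $(t,t,t^{1/m})$, superdiagonal entries $1-t^2$ and $\sqrt{1-t^2}\,\sqrt{1-t^{2/m}}$, and corner entry $-t\sqrt{1-t^2}\,\sqrt{1-t^{2/m}}$. Using the $n=3$ vector from \eqref{eqn:vec1} in the construction \eqref{eqn:curvef} then yields a curve $C_t\subseteq W(M_{\Phi_t})$ whose parametrization, because $M_{\Phi_t}$ is $3\times 3$ and upper triangular, is quadratic in $e^{is}$:
\[ f(s) = \alpha(t) + \beta(t)\,e^{is} + \gamma(t)\,e^{2is}, \]
with real coefficients; here $\beta$ is a positive multiple of $(1-t^2)+\sqrt{1-t^2}\sqrt{1-t^{2/m}}$ coming from the two superdiagonal entries, and $\gamma$ is a negative multiple of $t\sqrt{1-t^2}\sqrt{1-t^{2/m}}$ coming from the corner.

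Steps 2 and 3 then simplify dramatically thanks to the quadratic form of $f$. With $c(t)=\tfrac{f(0)+f(\pi)}{2}$ the $e^{is}$ term cancels, leaving the real center $c(t)=\alpha(t)+\gamma(t)$. The key observation is the factorization $f(s)-c(t)=e^{is}\big(\beta(t)+2i\gamma(t)\sin s\big)$, which gives $|f(s)-c(t)|^2=\beta(t)^2+4\gamma(t)^2\sin^2 s\ge \beta(t)^2$. Thus \eqref{eqn:fcR} holds with the clean radius $R(t)=\beta(t)$ and no calculus is required, in contrast to the degree-$5$ case. By the symmetry argument of Step 2, $D(c(t),R(t))$ lies in the convex hull of $C_t$, and since $C_t\subseteq W(M_{\Phi_t})$ and the numerical range is convex, $D(c(t),R(t))\subseteq W(M_{\Phi_t})$. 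Converting this disk to a pseudohyperbolic disk via \eqref{eqn:r} and using that $r\mapsto r+\tfrac1r$ is decreasing on $(0,1)$ with value $\tfrac{3}{\sqrt2}$ at $r=\tfrac1{\sqrt2}$, I would reduce the target bound $r(t)\ge \tfrac1{\sqrt2}$ to the single tractable inequality
\[ R(t)^2 - \tfrac{3}{\sqrt2}\,R(t) + 1 - c(t)^2 \le 0. \]

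The final step — pinning down $t_m$ and verifying this inequality throughout $(t_m,1)$ — is where the real work lies and is the main obstacle. The quantities $R(t)$ and $c(t)$ contain the fractional power $t^{1/m}$ together with the radicals $\sqrt{1-t^2}$ and $\sqrt{1-t^{2/m}}$, so the inequality is not polynomial in $t$. The plan is to set $u=t^{1/m}$ (so $t=u^m$), then isolate and square to clear the two square roots, reducing to a polynomial condition whose degree and coefficients depend on $m$. For each fixed $m\in\{2,\dots,7\}$ one analyzes this polynomial: at $t=0$ it degenerates to equality (reflecting that $M_{\Phi_0}=A_0$ and $W(A_0)$ is precisely the disk of radius $\tfrac1{\sqrt2}$), the inequality may fail for small positive $t$, and it must be shown to hold from the tabulated $t_m$ onward. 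The delicate points are that one must control the sign on an entire subinterval — requiring a monotonicity or no-root argument for the cleared polynomial rather than a single endpoint check — and that the thresholds are sensitive to the generating vector, so the stated $t_m$ are exactly those produced by \eqref{eqn:vec1} and are not sharp; as noted in Remark \ref{rem:curves}, vectors adapted to each $m$ would lower them. Executing this case analysis for $m=2,\dots,7$ produces the six values in the table.
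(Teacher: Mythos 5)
Your proposal follows the paper's proof essentially step for step: same vector \eqref{eqn:vec1} with $n=3$, same curve and center $c_m(t)$, and your factorization $f(s)-c(t)=e^{is}\bigl(\beta(t)+2i\gamma(t)\sin s\bigr)$ yields exactly the paper's radius $R_m(t)$ (a cleaner route than the paper's $\cos(2s)$ expansion, but the same bound), after which both arguments convert to the pseudohyperbolic radius via \eqref{eqn:r} and reduce $r_m(t)\ge \tfrac{1}{\sqrt2}$ to clearing radicals and locating the roots of a polynomial. The only gap is that you leave the per-$m$ endgame as a sketch, whereas the paper executes it (shown for $m=2$) by an intermediate-value contradiction: assuming $r_2(t')=\tfrac{1}{\sqrt2}$ for some $t'\in(0.11,1)$ forces $\sqrt{t'}$ to be a root of a degree-$40$ polynomial whose only root in $(0,1)$ is approximately $0.319<\sqrt{0.11}$.
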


\begin{proof}
We follow the general argument from Subsection \ref{sec:phd}. Let $f(s) = \langle M_{\Phi_t} \vec{x}(s), \vec{x}(s)\rangle$, where $\vec{x}$ is the vector-valued function from \eqref{eqn:vec1} when $n=3$. Then $f$ is given by
\[f(s) = \frac{1}{4} \left(-t e^{2 i s} \sqrt{1-t^2}  \sqrt{1-t^{2/m}}+\sqrt{2} e^{i s} \left(\sqrt{1-t^2} \sqrt{1-t^{2/m}}-t^2+1\right)+t^{1/m}+3 t\right)\]
 and parameterizes a curve $C_t$ that approximates the boundary of $W(M_{\Phi_t})$. Then we can identify the center of a disk in $W(M_{\Phi_t})$ by
\[ c_m(t)  =\tfrac{ f(0) + f(\pi)}{2} =\frac{1}{4} \left(t^{1/m}+t \left(3-\sqrt{1-t^2} \sqrt{1-t^{2/m}}\right)\right).\]
 A simple computation gives 
  \[
  \begin{aligned}
      |f(s)-c_m(t)|^2  &= \frac{1}{8} \left(1-t^2\right) \left(2 \sqrt{1-t^2} \sqrt{1-t^{2/m}}+2-t^2 \left(1-t^{2/m}\right) \cos (2 s)-\left(t^2+1\right) t^{2/m}\right)\\
       &\geq \frac{1}{8} \left(1-t^2\right)  \left(2 \sqrt{1-t^2} \sqrt{1-t^{2/m}}+2-t^2\left(1-t^{2/m}\right)-\left(t^2+1\right) t^{2/m}\right) \\
       &=\frac{1}{8} \left(1-t^2\right)  \left(2 \sqrt{1-t^2} \sqrt{1-t^{2/m}}+2 -t^2-t^{2/m} \right)
  \end{aligned}
  \]

 By the arguments in Subsection \ref{sec:phd}, the Euclidean disk $D(c_m(t), R_m(t))$ with center $c_m(t)$ and radius $R_m(t)$ given by
 $$R_m(t)=\sqrt{\frac{1}{8} \left(1-t^2\right)  \left(2 \sqrt{1-t^2} \sqrt{1-t^{2/m}}+2 -t^2-t^{2/m} \right)}$$
is in $W(M_{\Phi_t}).$  This Euclidean disk is also a pseudohyperbolic disk whose pseudohyperbolic radius $r_m(t)$ satisfies \eqref{eqn:r} with $r_m \in[0,1)$. As it is rather complicated, we do not include the formula for $r_m(t)$ here, but it is easy to check that since $R_m$ and $c_m$ are continuous on $[0,1)$, $r_m(t)$ is as well.
 
We wish to show that for each $m$, $r_m(t) \ge \frac{1}{\sqrt{2}}$ for $t\in (t_m,1)$, where $t_m$ is given in the table above. As the arguments are very similar for each $m$ value, we only provide the details for $m=2$.
By contradiction, suppose there exists a $t_*\in (0.11,1)$ such that $r_2(t_*)<\frac{1}{\sqrt{2}}$. As 
 $r_2(t)$ is continuous on $(0,1)$ and $r(\tfrac{1}{2})> \tfrac{1}{\sqrt{2}},$ the intermediate value theorem gives a $t' \in (0.11,1)$ with $r_2(t') = \frac{1}{\sqrt{2}}$.
We now find a polynomial $p$ such that this implies $p(\sqrt{t'})=0$; by locating the zeros of $p$ and seeing that they are all outside of $(\sqrt{0.11},1)$, we will obtain our contradiction. 

To that end, using \eqref{eqn:r}, one can conclude that $\sqrt{t'}$ is a solution to $$\left(\sqrt{2}+\frac{1}{\sqrt{2}}\right)R_2(t^2)=R_2(t^2)^2-c_2(t^2)^2+1.$$
Squaring both sides of the equation, we have that $\sqrt{t'}$ is also a solution to $$\left(\left(\sqrt{2}+\frac{1}{\sqrt{2}}\right)R_2(t^2)\right)^2=\left(R_2(t^2)^2-c_2(t^2)^2+1\right)^2.$$ 
The only remaining square root terms are those with a factor of $\sqrt{1+t^2}$. Isolating those terms on the right, squaring both sides again, and moving everything to the right gives a degree 40 polynomial $p(t)$ with $p(\sqrt{t'})=0$. After simplifying $p$, we have 
\[ \begin{aligned}
p(t)&= -16(t-1)^4(1+t^2)(-32 - 64t - 70t^2 - 68t^3 + 57t^5 + 51t^6 + 26t^7 - 6t^8\\
 - &13t^9 - 9 t^{10} - 5 t^{11} + t^{12} + 3 t^{13} + t^{14})^2+(128 + 8 t^2 - 224 t^3 - 199 t^4
 +20 t^5 + 112 t^6 \\
 + & 184 t^7 + 
    198 t^8 - 28 t^9 - 146 t^{10} - 44 t^{11}- 81 t^{12} + 20 t^{13}
    +54 t^{14}+ 3 t^{16} - 6 t^{18} + t^{20})^2.
 \end{aligned} \]
Mathematica shows that, of the 40 zeros of $p$, the only zero in the range $(0,1)$ is approximately $0.319.$ However, our assumption that $t'\in (0.11,1)$ implies the zero $\sqrt{t'}\in (0.33,1)$, which gives the contradiction. Therefore, for all $t\in (0.11,1)$, we have that $r(t)>\frac{1}{\sqrt{2}}$ as required. 
\end{proof}

\begin{figure}[h!] 
    \subfigure[$m=2$]
      {\includegraphics[width=0.42 \textwidth]{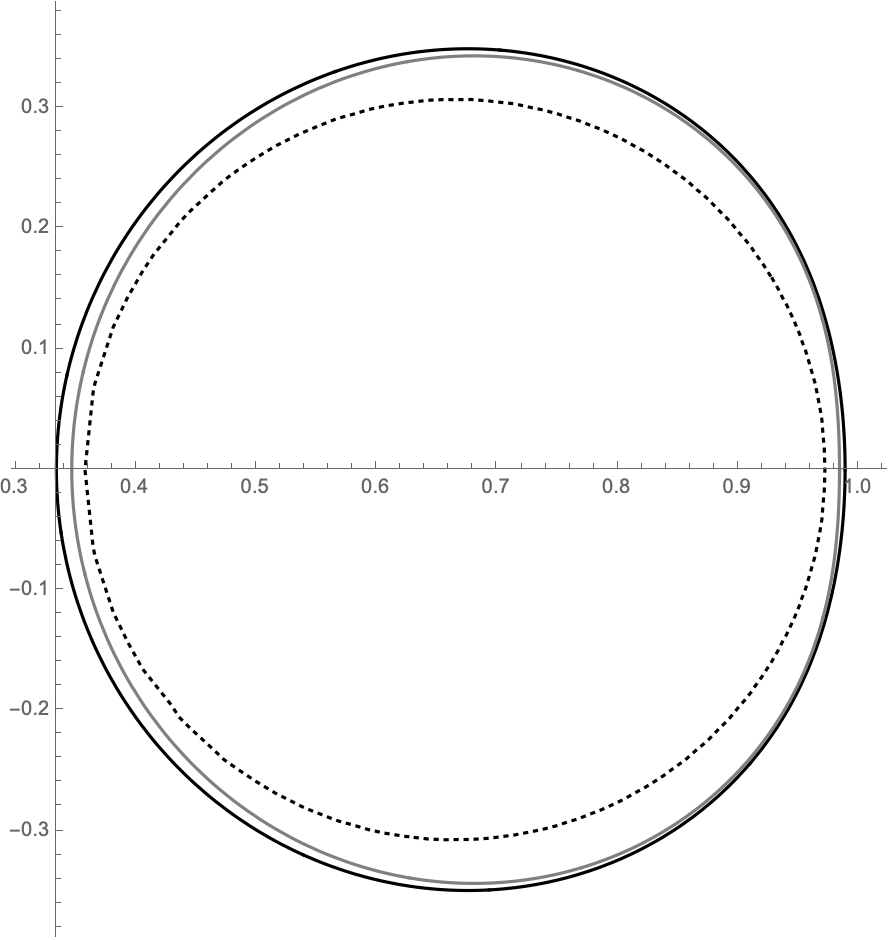}}
    \quad 
    \subfigure[$m=7$]
      {\includegraphics[width=0.45 \textwidth]{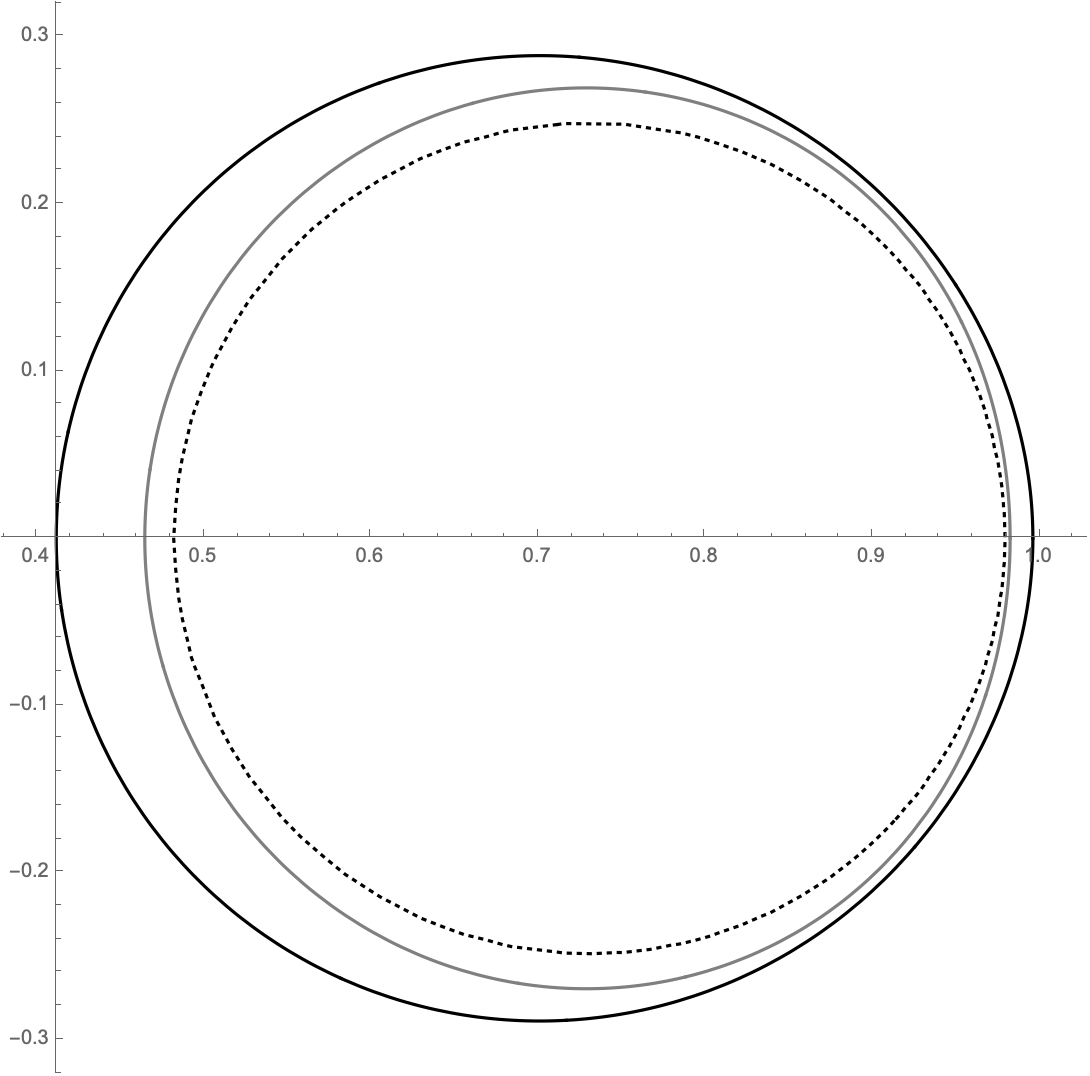}}
  \caption{\textsl{For $t=0.7$, $\partial W(M_{\Phi_t})$ (black), the curve $C_t$ (gray), a pseudohyperbolic disk with radius $\frac{1}{\sqrt{2}}$ in $C_t$ (dashed).}}\label{fig:n27}
\end{figure}

Theorem \ref{thm:n27} is illustrated in Figure \ref{fig:n27}. Here, one can see that when $m=2$, the curve $C_t$ does a much better job approximating $\partial W(M_{\Phi_t})$ than when $m=7$. That partially explains why the interval in Theorem \ref{thm:n27} is much larger for $m=2$ than for $m=7.$

In Theorem \ref{thm:n27}, we used the standard formula from \cite{BG21} to generate the $C_t$ curves. However, for each $m$, we can actually find curves that works better for this particular problem, i.e. curves that include larger pseudohyperbolic disks in their convex hulls.  The following remark gives the details.

\begin{remark} \label{rem:curves}  To obtain the $t_m$ values in Theorem \ref{thm:n27}, we made curves $C_t \subseteq W(M_{\Phi_t})$ using \eqref{eqn:vec1} for $n=3$ and showed that for $t\in (t_m,1)$, the curve $C_t$ contains a pseudohyperbolic disk of radius $\frac{1}{\sqrt{2}}$ in its convex hull. Those specific values of $t_m$ depended on the chosen curve.

However, as discussed in Subsection \ref{sec:phd} and the introduction, we can generate a curve inside of the associated numerical range using any continuous vector-valued function $\vec{x}: [a,b] \rightarrow \mathbb{S}^n$. For example, consider functions of the form
$$\vec{x}(s)=\begin{pmatrix}
a \\ be^{is} \\ ce^{2is}
\end{pmatrix},$$
where $a,b,c$ are real constants satisfying $a^2+b^2+c^2=1$. We obtain \eqref{eqn:vec1} when $n=3$ by setting $a=1/2$, $b=1/\sqrt{2}$, and $c=1/2$. 

More generally, each choice of $a,b,c$ gives a different curve of points $C_t$ in $W(M_{\Phi_t})$ via the formula $\langle M_{\Phi_t} \vec{x}(s), \vec{x}(s)\rangle$. By changing these constants, we can investigate a bunch of different curves for each $\Phi_t$ and try to find optimal ones. Using this method paired with analyses analogous to those in the proof of Theorem \ref{thm:n27}, for each $m\in \{2, \dots, 7\}$, we found particularly good values of $a,b,c$ so that the associated curve $C_t$ contained a pseudohyperbolic disk with radius $\frac{1}{\sqrt{2}}$ for all $t\in (t^*_m, 1)$ where $t_m^*$ is smaller (and often significantly smaller) than the $t_m$ value in Theorem \ref{thm:n27}. Thus, the $\Phi_t$ in Theorem \ref{thm:n27} actually satisfy the pseudohyperbolic disk criterion for $t$ in larger intervals that those indicated in the theorem. For each value of $m$, Figure \ref{fig:newtm} below contains both the vector generating the new curve $C_t$ and the improved value $t_m^*$. 

\begin{figure}[h!]
    \centering
\begin{centering}
\renewcommand*{\arraystretch}{2}
\begin{tabular}{||c | c | c | c | c||}
 \hline
 \hspace{0.1cm} $m$  \hspace{0.1cm} & \hspace{0.25cm} $a$ 
 &\hspace{0.25cm} $b$ 
 &\hspace{0.25cm} $c$ 
 &\hspace{0.25cm} $t^*_m$
 \hspace{0.25cm} \\  [0.5ex]
 \hline
 \hspace{0.1cm} $2$  \hspace{0.1cm} & \hspace{0.25cm} $\frac{\sqrt{6}}{5}$ 
 &\hspace{0.25cm} $\frac{4\sqrt{19}}{25}$ 
 &\hspace{0.25cm} $\frac{3\sqrt{19}}{25}$ 
 &\hspace{0.25cm} $0.09$
 \hspace{0.25cm} \\
 \hline
 \hspace{0.1cm} $3$  \hspace{0.1cm} & \hspace{0.25cm} $\frac{1}{\sqrt{5}}$ 
 &\hspace{0.25cm} $\frac{2}{3}$ 
 &\hspace{0.25cm} $\frac{4}{3\sqrt{5}}$ 
 &\hspace{0.25cm} $0.18$ 
 \hspace{0.25cm} \\
 \hline
 \hspace{0.1cm} $4$  \hspace{0.1cm} & \hspace{0.25cm} $\frac{\sqrt{2}}{\sqrt{11}}$ 
 &\hspace{0.25cm} $\frac{3\sqrt{6}}{11}$ 
 &\hspace{0.25cm} $\frac{3\sqrt{5}}{11}$ 
 &\hspace{0.25cm} $0.24$ 
 \hspace{0.25cm} \\
 \hline
 \hspace{0.1cm} $5$  \hspace{0.1cm} & \hspace{0.25cm} $\frac{\sqrt{3}}{2\sqrt{5}}$ 
 &\hspace{0.25cm} $\frac{\sqrt{17}}{5\sqrt{2}}$ 
 &\hspace{0.25cm} $\frac{\sqrt{51}}{10}$ 
 &\hspace{0.25cm} $0.28$ 
 \hspace{0.25cm} \\
 \hline
 \hspace{0.1cm} $6$  \hspace{0.1cm} & \hspace{0.25cm} $\frac{1}{2\sqrt{2}}$ 
 &\hspace{0.25cm} $\frac{3\sqrt{7}}{10\sqrt{2}}$ 
 &\hspace{0.25cm} $\frac{\sqrt{14}}{5}$ 
 &\hspace{0.25cm} $0.28$ 
 \hspace{0.25cm} \\
 \hline
 \hspace{0.1cm} $7$  \hspace{0.1cm} & \hspace{0.25cm} $\frac{1}{3}$ 
 &\hspace{0.25cm} $\frac{2\sqrt{2}}{3\sqrt{3}}$ 
 &\hspace{0.25cm} $\frac{4}{3\sqrt{3}}$ 
 &\hspace{0.25cm} $0.29$ 
 \hspace{0.25cm} \\
 \hline
\end{tabular}
\end{centering}
    \caption{Summary of new curves and improved $t^*_m$ values for $\Phi_t$ from \eqref{eqn:thetat2} with $m=2,...,7.$}
    \label{fig:newtm}
\end{figure}

It is worth noting that these values of $a,b,c$ are probably not the most optimal vectors for their respective $m$ values, and further investigation could reveal that the $\Phi_t$ satisfy the pseudohyperbolic disk criterion on even larger $t$-intervals.  Additionally, these new curves do not appear to generally enclose larger areas than the original curves. Rather, these new curves yield improved $t^*_m$ values because they appear to be closer to the edge of the unit disk, which should generally increase the pseudohyperbolic radius of an enclosed disk. Figure \ref{fig:newcurve} gives both the previous and new curves for $\Phi_t$ with $t=0.4$ for both $m=4$ and $m=7$. One can see that, in both cases, the new curves are closer to the edge of the unit disk than the original curves.

\begin{figure}[h!] 
    \subfigure[$m=4$]
      {\includegraphics[width=0.428 \textwidth]{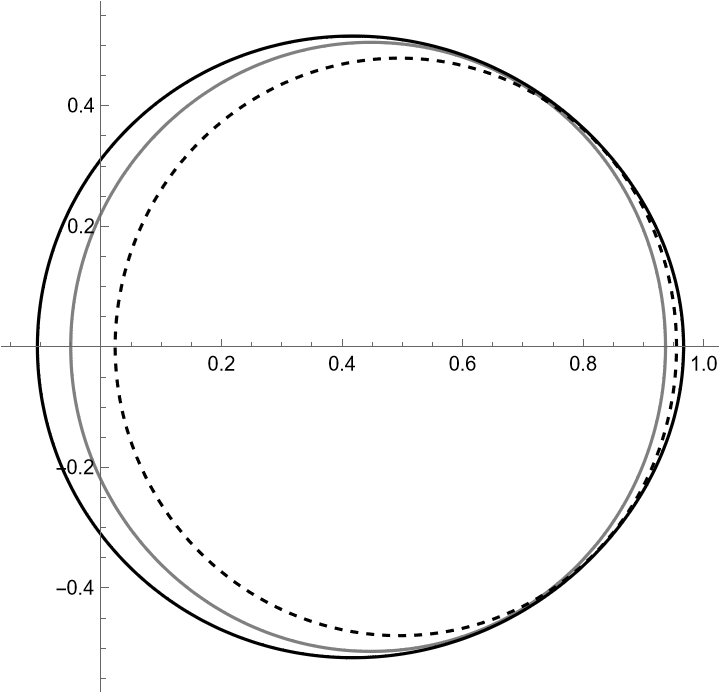}}
    \quad 
    \subfigure[$m=7$]
      {\includegraphics[width=0.45 \textwidth]{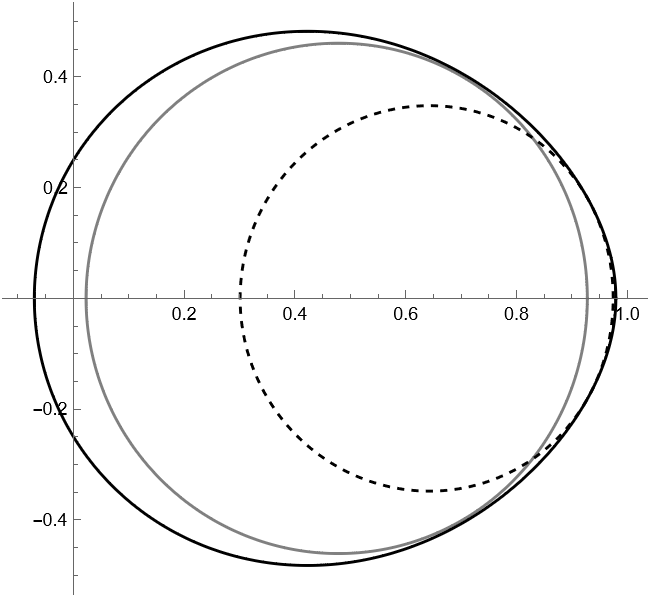}}
  \caption{\textsl{For $t=0.4$, $\partial W(M_{\Phi_t})$ (black), the original curve $C_t$ (gray), and the new curve (dashed)}.}\label{fig:newcurve}
\end{figure}

\end{remark}


In Theorem \ref{thm:n27} and Remark \ref{rem:curves}, we restricted to $m$ values between $2$ and $7$ because each $m$ value required its own sequence of computations to yield the interval endpoints $t_m$ or $t_m^*$. However, the following result shows that, even if a general formula is beyond the scope of this paper, there does exist a cut-off value $t_m$ for each $m$ in the following sense: the pseudohyperbolic disk criterion is satisfied for $\Phi_t$ of form \eqref{eqn:thetat2} with $t \in(t_m,1)$.

\begin{theorem} \label{thm: general}
Let $\Phi_t$ be a degree-three Blaschke product of the form \eqref{eqn:thetat2}.  For each $m \in \mathbb{N}$ with $m >1$, there exists a $t_m \in (0,1)$, such that for $t\in (t_m,1)$, $W(M_{\Phi_t})$ contains a pseudohyperbolic disk with radius $\frac{1}{\sqrt{2}}$. 
\end{theorem}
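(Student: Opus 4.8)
The plan is to reuse, without change, the curve $C_t$ and the formulas for $c_m(t)$ and $R_m(t)$ derived in the proof of Theorem \ref{thm:n27}. That derivation (the computation of $f(s)=\langle M_{\Phi_t}\vec{x}(s),\vec{x}(s)\rangle$, the center $c_m(t)=\tfrac{f(0)+f(\pi)}{2}$, and the lower bound producing the Euclidean radius $R_m(t)$) never used the restriction $2\le m\le 7$, so for every integer $m>1$ the Euclidean disk $D(c_m(t),R_m(t))$ still lies in $W(M_{\Phi_t})$, and it equals a pseudohyperbolic disk whose radius $r_m(t)$ is continuous on $[0,1)$ and solves \eqref{eqn:r}. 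Instead of extracting an explicit $t_m$ via a polynomial (which must be carried out separately for each $m$, as in Theorem \ref{thm:n27}), I would prove the single limiting statement $\lim_{t\to 1^-} r_m(t) > \tfrac{1}{\sqrt2}$. Since $\tfrac{1}{\sqrt2}=(\tfrac12)^{1/(n-1)}$ for $n=3$, the existence of this limit together with its value immediately produces a $t_m<1$ with $r_m(t)>\tfrac{1}{\sqrt2}$ for all $t\in(t_m,1)$, which is exactly the pseudohyperbolic disk criterion.

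Computing this limit is the crux. Writing $t=1-\delta$ and expanding to leading order via $(1-\delta)^\alpha=1-\alpha\delta+O(\delta^2)$, I would record $1-t^2\sim 2\delta$ and $1-t^{2/m}\sim\tfrac{2}{m}\delta$, hence $\sqrt{1-t^2}\sqrt{1-t^{2/m}}\sim\tfrac{2}{\sqrt m}\delta$ and $2-t^2-t^{2/m}\sim 2\delta(1+\tfrac1m)$. Substituting into $R_m(t)^2$ and collecting terms, the factor $1+\tfrac{2}{\sqrt m}+\tfrac1m=\beta^2$ emerges, where $\beta:=1+\tfrac{1}{\sqrt m}$, giving $R_m(t)\sim\tfrac{\delta}{\sqrt2}\beta$. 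A parallel expansion gives $c_m(t)\to 1$ with $1-c_m(t)\sim\tfrac{\delta}{4}(2+\beta^2)$, hence $1-|c_m(t)|^2\sim\tfrac{\delta}{2}(2+\beta^2)$. Because $R_m^2=O(\delta^2)$ is negligible against $1-|c_m|^2=O(\delta)$, the right-hand side of \eqref{eqn:r} satisfies $\tfrac{R_m^2-|c_m|^2+1}{R_m}\to\tfrac{2+\beta^2}{\sqrt2\,\beta}$, so $r_m(t)\to r^*$, the root in $(0,1)$ of $r^*+\tfrac{1}{r^*}=\tfrac{2+\beta^2}{\sqrt2\,\beta}$.

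It then remains to check $r^*>\tfrac{1}{\sqrt2}$. Since $r\mapsto r+\tfrac1r$ is strictly decreasing on $(0,1)$ and equals $\tfrac{3}{\sqrt2}$ at $r=\tfrac{1}{\sqrt2}$, this is equivalent to $\tfrac{2+\beta^2}{\sqrt2\,\beta}<\tfrac{3}{\sqrt2}$, i.e.\ $\beta^2-3\beta+2<0$, i.e.\ $(\beta-1)(\beta-2)<0$. For every integer $m>1$ we have $0<\tfrac{1}{\sqrt m}<1$, so $1<\beta<2$ and the inequality holds strictly; thus $r^*>\tfrac{1}{\sqrt2}$ and the three steps combine to prove the theorem. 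Notably, at $m=1$ one gets $\beta=2$ and $r^*=\tfrac{1}{\sqrt2}$ exactly, which explains the hypothesis $m>1$ and is consistent with the $n=3$ case of Theorem \ref{thm:pshds}.

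The only genuinely delicate step is making the asymptotics rigorous: justifying the leading-order expansions of $R_m$ and $1-c_m$ and confirming that the map sending the right-hand side of \eqref{eqn:r} to its root $r\in(0,1)$ is continuous, so that $\lim_{t\to 1^-} r_m(t)$ really exists and equals $r^*$. One should also verify that no degeneracy occurs, i.e.\ that the limiting value $\tfrac{2+\beta^2}{\sqrt2\,\beta}$ stays $\ge 2$ so that a root in $(0,1)$ exists; indeed $\tfrac{2+\beta^2}{\sqrt2\,\beta}-2=\tfrac{(\beta-\sqrt2)^2}{\sqrt2\,\beta}\ge 0$. Everything else is a direct application of the machinery already set up for Theorem \ref{thm:n27}.
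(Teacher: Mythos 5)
Your proposal is correct, and its overall architecture matches the paper's: both arguments fix a boundary-approximating curve, extract the Euclidean disk $D(c_m(t),R_m(t))$, and win by showing $\lim_{t\to 1^-}r_m(t)>\tfrac{1}{\sqrt{2}}$ so that an interval $(t_m,1)$ exists. The execution differs in three ways that are worth recording. First, the paper does \emph{not} reuse the curve from Theorem \ref{thm:n27}: it switches to the generating vector $\vec{x}(s)=(\tfrac{\sqrt{11}}{6},\tfrac{2}{3}e^{is},\tfrac{1}{2}e^{2is})^{T}$, which (as in Remark \ref{rem:curves}) pushes the curve toward $\partial\mathbb{D}$ and yields a larger limiting radius for large $m$; you correctly observe that the standard vector from \eqref{eqn:vec1} already suffices for the existence statement, since its limit $r^*$ solves $r^*+\tfrac{1}{r^*}=\tfrac{2+\beta^2}{\sqrt{2}\,\beta}$ with $\beta=1+\tfrac{1}{\sqrt{m}}$ and this exceeds $\tfrac{1}{\sqrt{2}}$ strictly for every integer $m\ge 2$ (I verified your expansions $R_m\sim\tfrac{\delta}{\sqrt{2}}\beta$ and $1-c_m^2\sim\tfrac{\delta}{2}(2+\beta^2)$ against the formulas in Theorem \ref{thm:n27}; they are right, and the inequality of Theorem \ref{thm:n27} bounding $\cos(2s)$ by $1$ never uses $m\le 7$). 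Second, you compute the limit by a first-order expansion in $\delta=1-t$, whereas the paper substitutes $t\mapsto t^m$, factors $1-t^{2m}$, and evaluates three separate limits; your route is shorter but, as you note, needs the error terms controlled uniformly enough to conclude the limit exists. Third, and most substantively, your final inequality reduces to $(\beta-1)(\beta-2)<0$, a fully elementary statement covering all $m>1$ at once, while the paper's limit function $\ell(m)$ requires locating the roots of a degree-$8$ polynomial in Mathematica. The trade-off is that the paper's curve gives quantitatively better limiting radii for large $m$ (tending to $\tfrac{\sqrt{11}}{4}$ rather than to $\tfrac{1}{\sqrt{2}}$), which matters if one wants good explicit $t_m$, but for the existence claim your version is cleaner and removes the computer-assisted step.
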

\begin{proof}
This proof has 3 steps.\\

\noindent \textbf{Step 1: Finding the equation for $r(t)$}. We initially proceed as in Subsection \ref{sec:phd} and define the boundary approximating curve $C_t$ using $\langle M_{\Phi_t} \vec{x}(s), \vec{x}(s)\rangle$, where $\vec{x}$ is given by
\[
\renewcommand*{\arraystretch}{1.2}
  \vec{x}(s)=
\begin{pmatrix}
\frac{\sqrt{11}}{6}\\
\frac{2}{3}e^{is}\\
\frac{1}{2}e^{i2s}\\
\end{pmatrix} \text{  for  } s\in [0,2\pi).
\]
 Then the points on $C_t$ are given by the function
    $$f(s) = \frac{1}{36} \left(9 \left(t^{1/m}+3 t\right) + 4 \left(3 \sqrt{1-t^2} \sqrt{1-t^{2/m}}-\sqrt{11} t^2+\sqrt{11}\right)e^{i s}-\left(3 \sqrt{11} t \sqrt{1-t^2} \sqrt{1-t^{2/m}}\right)e^{2 i s}\right),$$
and the formula for the center of the proposed disk in the convex hull of $C_t$ is
$$c(t)=\frac{f(\pi)+f(0)}{2}=\frac{1}{12}(3t^{1/m}+t\left (9-\sqrt{11}\sqrt{1-t^2}\sqrt{1-t^{2/m}})\right ).$$ A simple computation gives 
  \[
  \begin{aligned}
      |f(s)-c(t)|^2 
      & \geq \frac{1}{324}\left( 1 - t^2\right )\left (80 + 
   24\sqrt{11} \sqrt{1 - t^2} \sqrt{1 - t^{2/m}}- 36 t^{2/m}  - 
   44 t^2\right ).
  \end{aligned}
  \]
This implies that the Euclidean disk $D(c(t), R(t)) \subseteq W(M_{\Phi_t})$, where $$R(t)=\sqrt{\frac{1}{324}\left( 1 - t^2\right )\left (80 + 
   24\sqrt{11} \sqrt{1 - t^2} \sqrt{1 - t^{2/m}}- 36 t^{2/m}  - 
   44 t^2\right )}.$$
Using \eqref{eqn:r}, we can find the pseudohyperbolic radius $r(t)$ of this disk 
\begin{align} \label{eqn:r(t)}
    r(t)= \frac{1-c(t)^2+R(t)^2-\sqrt{-4 R(t)^2+(-1+c(t)^2-R(t)^2)^2}}{2R(t)}.
\end{align}

\noindent
\textbf{Step 2: Finding the limit of $r(t)$}. As part of the proof, we need to compute the $\lim_{t \to 1^-}r(t)$. It is immediate that 
\begin{align} \label{eqn:r(t^2)}
 \lim_{t \to 1^-}r(t) = \lim_{t \to 1^-}r(t^m)= \lim_{t \to 1^-}\frac{1-c(t^m)^2+R(t^m)^2-\sqrt{-4 R(t^m)^2+(-1+c(t^m)^2-R(t^m)^2)^2}}{2R(t^m)},
\end{align}
and so we will compute that final limit instead.
Since $R(1)=0$, we need to do some initial algebraic simplification. By multiplying the numerator and denominator by $\frac{1}{1-t}$, we can find the desired limit by computing these individual limits 
    \begin{align}
        & \lim_{t\rightarrow 1^-} \frac{2R(t^m)}{1-t}  \label{lim:1}\\
        & \lim_{t\rightarrow 1^-} \frac{1-c(t^m)^2+R(t^m)^2}{1-t}  \label{lim:2} \\ 
        & \lim_{t\rightarrow 1^-} \frac{\sqrt{-4 R(t^m)^2+(-1+c(t^m)^2-R(t^m)^2)^2}}{1-t}  \label{lim:3}
    \end{align}
and inserting them back into \eqref{eqn:r(t^2)}.

We will first compute $\eqref{lim:1}$.   First notice that $(1-t^{2m})=(1+t^m)(1-t)(1+\dots + t^{m-1})$. Then by substituting that in and using algebraic manipulations we have 
\begin{align*}
   \frac{2R(t^m)}{1-t} &= \frac{1}{9}\sqrt{\frac{(1 - t^{2m})(80 - 36 t^{2}  
   -44 t^{2m}+ 
   24\sqrt{11} \sqrt{1 - t^{2m}} \sqrt{1 - t^{2}})}{(1-t)^2}} \\
   &= \frac{1}{9}\sqrt{(1+t^m)(1+t+\dots + t^{m-1})\left (\frac{80 - 36 t^{2}- 
   44 t^{2m}}{1-t} + 
   24\sqrt{11} \sqrt{(1+t^m)(1+\dots + t^{m-1})(1+t)}\right )}.
\end{align*}
Using standard techniques (e.g. L'Hopital's rule), one can compute the limit of $\frac{2R(t^m)}{1-t}$ and obtain 
\begin{equation} \label{eqn:glimit} g(m):=\lim_{t\rightarrow 1} \frac{2R(t^m)}{1-t}= \frac{1}{9}\sqrt{2m(88m+72 +48\sqrt{11m})}.
\end{equation}
Now we will find \eqref{lim:2}. Notice that we can simplify the expression to obtain $$\frac{1-c(t^m)^2}{1-t}+\frac{R(t^m)^2}{1-t}=\frac{1-c(t^m)^2}{1-t}+\frac{R(t^m)}{1-t}R(t^m).$$ Since $\lim_{t\rightarrow 1^-}R(t^m)=0$, it remains to find $\lim_{t\rightarrow 1^-}\frac{1-c^2(t^m)}{1-t}$. By substituting in for $1-t^{2m}$ and simplifying, we obtain
\begin{align*}
    \frac{1-c(t^m)^2}{1-t} 
     =&  \frac{1-\frac{23}{36}t^{2m}+\frac{11}{144}t^{4m}-\frac{3}{8}t^{m+1}-\frac{1}{16}t^2+\frac{11}{144}t^{2m+2}-\frac{11}{144}t^{4m+2}}{1-t}\\
     &+\left (\frac{1}{8}t^{2m}+\frac{1}{24}t^{m+1}\right )\sqrt{11}\sqrt{1+t}\sqrt{(1+t^m)(1+\dots + t^{m-1})}.
\end{align*}
Using standard techniques, one can compute the limit of $\frac{1-c(t^m)^2}{1-t}$ and obtain
\begin{equation} \label{eqn:hlimit}
    h(m):=\lim_{t\rightarrow 1^-} \frac{1-c(t^m)^2+R(t^m)^2}{1-t}=\lim_{t\rightarrow 1^-} \frac{1-c(t^m)^2}{1-t}= \frac{1}{2} (3 m+1)+\frac{\sqrt{11 m}}{3}.
\end{equation}
Lastly, we will find \eqref{lim:3}. Notice that this function is a combination of the other functions we already  considered and so, its limit must equal $\sqrt{-g(m)^2 +h(m)^2}$, where $g(m)$ and $h(m)$ were defined in \eqref{eqn:glimit} and \eqref{eqn:hlimit}. Inserting everything back into the original equation for the limit of $r(t)$ (which implicitly depends on $m$), we obtain the following limit function
\begin{equation} \label{eqn:ln} \ell(m):=\lim_{t\rightarrow 1^-} r(t^m)=\frac{9 + 6 \sqrt{11 m} + 27m - \sqrt{
 81 + 108\sqrt{11m} + 306m - 60m\sqrt{11m} + 
  25 m^2}}{8\sqrt{m \left (9 + 6\sqrt{11m} + 11m\right )}}.\end{equation}

\noindent
\textbf{Step 3: Showing that the limit of $r(t)$ is greater than $\frac{1}{\sqrt{2}}$}. By the reasonably simple formula for $r(t)$ and the fact that $\lim_{t\rightarrow 1^-} r(t)$ exists, we can define $r(t)$ so that it is left-continuous at $t=1$ and its value at $t=1$ is exactly the limit $\ell(m)$ from \eqref{eqn:ln}. Then, if $\ell(m)> \tfrac{1}{\sqrt{2}}$, there must be some interval $(t_m,1)$ such that $r(t) >\tfrac{1}{\sqrt{2}}$ for all $t \in (t_m,1)$.

Thus, to finish the proof, we just need to show that $\ell(m)> \tfrac{1}{\sqrt{2}}$ for each $m\in \mathbb{N}$ with $m \ge 2$. 
We will actually show that $\ell(x)>\tfrac{1}{\sqrt{2}}$ for all $x\in (1.7, \infty).$ 
Proceeding by contradiction, assume that there is an $x_*\in (1.7,\infty)$ such that $\ell(x_*)\le \frac{1}{\sqrt{2}}$. 
Since $\ell$ is continuous on $(1.7, \infty)$ and $\ell(2)>\tfrac{1}{\sqrt{2}}$, there must exist an $x' \in (1.7, \infty)$ such that $\ell(x') = \frac{1}{\sqrt{2}}$.
Then $\sqrt{x'}$ also satisfies $\ell(x^2) = \frac{1}{\sqrt{2}}$. 
Using algebraic manipulations similar to those in the proof of Theorem \ref{thm:n27}, one can show that
$\sqrt{x'}$ must be a zero of the polynomial
$$q(x)= 81 x^2 + 162 \sqrt{11} x^3 + 1125 x^4 + 180 \sqrt{11} x^5 - 569 x^6 - 174 \sqrt{11} x^7 - 77 x^8.$$ 
Using Mathematica, one can locate (up to a small error) all $8$ zeros of $q(x)$ and confirm that none of them are above $1.3.$ However, our assumption that $x' \in( 1.7, \infty)$ implies the $\sqrt{x'}$ satisfies both $\sqrt{x'} > 1.3$ and $q(\sqrt{x'})=0$, which gives our contradiction. Therefore, $\ell(x)>\tfrac{1}{\sqrt{2}}$ for all $x\in (1.7, \infty),$   which finishes the proof.
\end{proof}

\subsection{Other Blaschke products: the $4 \times 4$ case} \label{subsec:4x4} We now consider functions of the following form, with a repeated zero at $t$ and an additional zero at $\sqrt{t}$:
\begin{equation} \label{eqn:thetat3}  \Psi_t(z):= \left(\frac{z-t}{1-t z}\right)^3\left(\frac{z-\sqrt{t}}{1-\sqrt{t}z}\right).\end{equation}
Somewhat surprisingly, the pseudohyperbolic disk criteria holds for all $\Psi_t$ of form \eqref{eqn:thetat3}.

\begin{theorem} \label{thm:44} Let $t\in[0,1)$ and let $\Psi_t$ be a degree-four Blaschke product of the form \eqref{eqn:thetat3}. Then $W(M_{\Psi_t})$ contains a pseudohyperbolic disk with radius $(\frac{1}{2})^{\frac{1}{3}}$. 
\end{theorem}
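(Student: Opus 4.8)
The plan is to follow the same four-step structure laid out in Subsection~\ref{sec:phd}, exactly as in the proofs of Theorems~\ref{thm:55} and~\ref{thm:n27}, but now adapted to the degree-four product $\Psi_t$ in \eqref{eqn:thetat3}. First I would write down the matrix $M_{\Psi_t}$ using the formula \eqref{eqn:M}, with eigenvalues $a_1=a_2=a_3=t$ and $a_4=\sqrt{t}$, and then form the boundary-approximating curve $C_t$ parameterized by $f(s)=\langle M_{\Psi_t}\vec{x}(s),\vec{x}(s)\rangle$. The natural first choice is the standard vector $\vec{x}(s)$ from \eqref{eqn:vec1} with $n=4$, though (as Remark~\ref{rem:curves} illustrates for the $3\times 3$ case) one may need to tune the entries of a general $\vec{x}(s)=(a,be^{is},ce^{2is},de^{3is})^{\mathsf{T}}$ to push the enclosed disk closer to $\partial\mathbb{D}$ and thereby cover the full range $t\in[0,1)$. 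The output of this step is a trigonometric polynomial $f(s)$ with coefficients that are algebraic in $t$ (involving $\sqrt{t}$ and factors like $\sqrt{1-t^2}$, $\sqrt{1-t}$).

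Next I would set $c(t)=\tfrac{f(0)+f(\pi)}{2}$ and compute $|f(s)-c(t)|^2$. Following the pattern of the earlier proofs, I expect this to simplify — after substituting $x=\cos s$ and using the multiple-angle identities for $\cos(2s),\cos(3s)$ — into the form $R(t)^2 + g(t,x)$, where $R(t)^2$ is the constant term and $g(t,x)\ge 0$ is a nonnegative remainder on $[0,1]\times[-1,1]$. Verifying $g(t,x)\ge 0$ by standard calculus (checking critical points and boundary values) gives the lower bound \eqref{eqn:fcR}, so that $D(c(t),R(t))\subseteq W(M_{\Psi_t})$ lies inside the convex hull of $C_t$. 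I would then convert this Euclidean disk into a pseudohyperbolic disk via \eqref{eqn:r}, obtaining an explicit (if complicated) pseudohyperbolic radius $r(t)$ that is continuous on $[0,1)$.

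The final step is to prove $r(t)\ge (\tfrac{1}{2})^{1/3}$ for all $t\in[0,1)$. Here I would use the same contradiction-plus-polynomial argument as in Theorem~\ref{thm:55}: assuming $r(t_*)<(\tfrac12)^{1/3}$ for some $t_*$, continuity and a checkpoint value (e.g.\ $r$ at some convenient $t$) force, via the intermediate value theorem, a crossing point $t'$ with $r(t')=(\tfrac12)^{1/3}$. Plugging this equality into \eqref{eqn:r}, clearing the square-root terms by repeated squaring, and moving everything to one side produces a polynomial $p$ (in $t$ or $\sqrt{t}$) with $p(t')=0$; I would then locate all its roots in Mathematica and confirm none lie in the relevant open interval, giving the contradiction.

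The main obstacle, I expect, is the square-root structure. Because $\Psi_t$ has the extra zero at $\sqrt{t}$ rather than a single repeated zero, $M_{\Psi_t}$ does \emph{not} split as $tI+(1-t^2)A_t$, so the clean nilpotent reduction behind \eqref{eqn:At} and Theorem~\ref{thm:55} is unavailable, and the coefficients of $f(s)$ carry both $\sqrt{t}$ and $\sqrt{1-t^2}$ factors. This makes the nonnegativity check for $g(t,x)$ genuinely messier than in the single-repeated-zero case, and it inflates the degree of the final polynomial $p$ dramatically (each squaring to kill a distinct radical roughly doubles the degree). Keeping the algebra tractable — likely by working in the variable $\sqrt{t}$ throughout and choosing $\vec{x}$ judiciously so that $R(t)^2$ stays a manageable expression — will be the crux, and it is plausible that, as in Theorem~\ref{thm:55}, the extremal behaviour occurs exactly at the endpoints $t=0$ and $t\to 1^-$, which should be checked directly to anchor the inequality.
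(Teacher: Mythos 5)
Your proposal follows essentially the same route as the paper's proof: the standard vector from \eqref{eqn:vec1} with $n=4$ suffices (no tuning needed), the substitution $x=\cos s$ yields $|f(s)-c(t)|^2 = R(t)^2 + \frac{1}{80}(t-1)^2(t+1)g(t,x)$ with $g\ge 0$ checked by calculus, and the final contradiction argument reduces to locating the roots of a degree-$56$ polynomial in $\sqrt{t'}$ via Mathematica, exactly as you anticipated. Your plan is correct and matches the paper's argument in all essential respects.
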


\begin{proof}
Define the approximating curve $C_t$ by $\langle M_{\Psi_t} \vec{x}(s), \vec{x}(s)\rangle$ where $\vec{x}$ is from \eqref{eqn:vec1} with $n=4$. 
Then the points on $C_t$ are given by the function
\[
\begin{aligned}
    f(s) =& \frac{1}{20} \Big( \sqrt{5} t+15 t-\sqrt{5} \sqrt{t}+5 \sqrt{t}+ e^{i s} \left(-3 \sqrt{5} t^2-5 t^2-2 \sqrt{5} \sqrt{t+1} t+2 \sqrt{5} \sqrt{t+1}+3 \sqrt{5}+5\right)\\
    &+e^{2 i s} \left(2 \sqrt{5} t^3+2 \sqrt{5} \sqrt{t+1} t^2-2 \sqrt{5} t-2 \sqrt{5} \sqrt{t+1} t\right)\\
    &+ e^{3 i s} \left(\sqrt{5} \sqrt{t+1} t^3-5 \sqrt{t+1} t^3-\sqrt{5} \sqrt{t+1} t^2+5 \sqrt{t+1} t^2\right)\Big).
\end{aligned}
\]
As in Section \ref{sec:phd}, define a center $c(t)$ by
$$c(t)=\frac{f(\pi)+f(0)}{2}=\frac{1}{20} \left(2 \sqrt{5} t^3+2 \sqrt{5} \sqrt{t+1} t^2-\left(2 \sqrt{5} \sqrt{t+1}+\sqrt{5}-15\right) t-\left(\sqrt{5}-5\right) \sqrt{t}\right).$$ Setting $x=\cos(s)$ and simplifying, we have 
\begin{align}
      |f(s)-c(t)|^2 =R(t)^2+ \frac{1}{80} (t-1)^2 (t+1) g(t,x),
\end{align}
where 
\begin{align*}
   R(t)^2 =& -\frac{1}{40} (t-1)^2 (t+1) \Big(\sqrt{5} t^4-3 t^4-4 t^3-8 \sqrt{t+1} t^2-2 \left(-2 t+\sqrt{5} \sqrt{t+1}-3 \sqrt{t+1}+\sqrt{5}-5\right) t^2\\
   &-8 t^2-3 \sqrt{5} t-7 t-2 \sqrt{5} \sqrt{t+1}-6 \sqrt{t+1}-3 \sqrt{5}-\frac{17}{2}  \Big)\\
     g(t,x)=& 8t^2\left(1-x^2\right) \left( 5 -\sqrt{5}+2t+3\sqrt{1+t} -\sqrt{5}\sqrt{1+t}+2tx -2tx\sqrt{5}+2tx\sqrt{1+t} - 2\sqrt{5} tx \sqrt{1+t}\right) +1
\end{align*}
Using a standard (though tedious) calculus computation, one can show that $g(t,x)\geq 0$ on $[0,1]\times [-1,1]$. Following the arguments from Section \ref{sec:phd}, this implies that the Euclidean disk $D(c(t), R(t))\subseteq W(M_{\Psi_t}).$
This disk is also a pseudohyperbolic disk, whose pseuohyperbolic radius $r(t) \in [0,1)$ satisfies 
\begin{equation} \label{eqn:crt} r(t)+\frac{1}{r(t)}=\frac{R(t)^2-c(t)^2+1}{R(t)}.\end{equation}
To show $r(t)> \left(\frac{1}{2}\right)^{\frac{1}{3}}$ for $t\in (0,1)$, first observe that $r(t)$ will be continuous on $[0,1)$ and $r\left(\tfrac{1}{2}\right) > \left(\frac{1}{2}\right) ^{\frac{1}{3}}.$ If $r(t)<\left(\frac{1}{2}\right) ^{\frac{1}{3}}$ for some $t\in[0,1)$, there must be a $t'\in[0,1)$ where $r(t') = \left(\frac{1}{2}\right) ^{\frac{1}{3}}.$ Manipulating \eqref{eqn:crt} (in a way very similar to the proof of Theorem \ref{thm:n27}) will imply that $\sqrt{t'}$ is the zero of a degree $56$ polynomial $p$. However, one can use  Mathematica (or similar computer software) to locate the approximate zeros of $p$ and conclude that all of them lie outside of $[0,1)$. This gives the contradiction and  completes the proof. \end{proof}


\section{Crouzeix's Conjecture for Nilpotent Matrices} \label{sec:At}

In this section, we deepen the analysis from \cite{BG21} about Crouzeix's conjecture for certain nilpotent matrices and extend that analysis to new classes of nilpotent matrices. 

\subsection{Overview of Method} \label{sec:nil0}
We use curves approximating numerical ranges to study Crouzeix's conjecture both 
 for certain cases of the $n\times n$ matrices $A_t$ from \eqref{eqn:At} and for related classes of nilpotent matrices that involve a parameter $m$, which we denote $A_{t,m}$.  Here is the idea, which works equally well for the $A_t$ and $A_{t,m}$ matrices, though we use the $A_t$ notation below for simplicity:\\

\begin{itemize}
\item[1.] Let $\vec{x}(s)$ be the vector-valued function defined in \eqref{eqn:vec1} and let $C_t$ denote the curve of points in $W(A_t)$ given by $f(s):=\left \langle A_t \vec{x}(s), \vec{x}(s) \right \rangle$ for $s\in[0, 2\pi].$\\

\item[2.] Define a function $F(z)$ by $F(e^{is}) = f(s).$ In all of our situations, $F$ is a polynomial (whose coefficients might depend on $t$) with $F(0)=0$ and $F'(0) \ne 0.$ Since $F(\mathbb{T}) = C_t$, properties of holomorphic functions imply that $F(\mathbb{D})$ is contained in the convex hull of $C_t$, which in turn is in $W(A_t)$, see Remark $5.7$ in \cite{BG21}. \\

\item[3.] Find an $n \times n $ matrix $B_t$ such that $F(B_t) = A_t.$ Specifically, the fact that $F'(0) \ne 0$ implies that $F$ is locally invertible near $0$. Let $F^{-1}$ denote this local inverse and let $G_n$ denote the degree $n-1$ Taylor polynomial of  $F^{-1}$ centered at $0$. Set $B_t = G_n(A_t)$ and note that $B_t$ also equals $F^{-1}(A_t)$, since $A_t$ is nilpotent with order at most $n$. In the situations we consider, the Jordan decomposition of $B_t$ is always of the form $X_t J_n X_t^{-1}$, where $J_n$ is the standard $n\times n$ Jordan block  and $X_t$ is an $n\times n$ invertible matrix.\\

\item[4.] Use this to analyze Crouzeix's conjecture for $A_t$ by first fixing any $p\in \mathbb{C}[z]$.
Then, as in \cite{BG21},  we have the following sequence:
 \begin{align}
 \|p(A_t)\| &=  \|(p \circ F)(B_t)\| \nonumber \\
&= \|X_t (p \circ F)(J_n) X_t^{-1}\| &\notag\\
 & \le   \|X_t\|  \|X_t^{-1}\| \cdot \|(p \circ F)(J_n)\| \nonumber \\
& \le \|X_t\|  \|X_t^{-1}\| \sup_{z \in \mathbb{D}} |(p \circ F)(z)| \nonumber \\
&  \le \|X_t\|  \|X_t^{-1}\| \sup_{z \in W(A_t)} |p(z)|, \label{eqn:cc}
 \end{align} 
 where we used von Neumann's inequality applied to the contraction $J_n$ and the fact that $F(\mathbb{D}) \subseteq W(A_t).$ Then, $A_t$ will satisfy Crouzeix's conjecture as long as $\|X_t\|  \|X_t^{-1}\| \le 2.$ 
\end{itemize}

Throughout Section \ref{sec:At}, we keep this construction at the forefront and primarily restrict to studying the properties of $\|X_t\|$ and $ \|X_t^{-1}\|$. It is worth noting that  the Jordan decomposition of $B_t$ is not unique and there are multiple matrices $X_t$ for which $X_tJ_nX_t^{-1}=B_t$. We could use any of these $X_t$ to bound $||p(A_t)||$. However, we checked several situations and the $X_t$ we use (specifically, the ones provided by Mathematica's Jordan Decomposition command) appear to generally give the lowest value for $||X_t||  \ ||X_t^{-1}||$.

\subsection{Nilpotent matrices from $M_{\Theta}$ matrices} \label{subsec:nil1}
We first study the nilpotent matrices $A_t$ from \eqref{eqn:At} that arose naturally in the study of $M_{\Theta_t}$ in the $n=4$ case in \cite{BG21}. Following the arguments in Subsection \ref{sec:nil0}, we obtain 
  \begin{equation}  X_t = \begin{pmatrix} 1 & 0 & 0 & 0 \\
   0 & \tfrac{1}{4} (1 + \sqrt{5}) & -\tfrac{3}{40} (-5 + \sqrt{5}) t & -\tfrac{t^2}{
  8 \sqrt{5}} \\[.2 em]
    0 & 0 & \tfrac{1}{8} (3 + \sqrt{5}) & \tfrac{3 t}{4 \sqrt{5}} \\[.2 em]
     0 & 0 & 0 & \tfrac{1}{8} (2 + \sqrt{5}) \end{pmatrix}.\label{eqn:Xt4} \end{equation}

     The details, including formulas for $F$ and $B_t$, also appear in Remark 5.8 in \cite{BG21}. To analytically study these $X_t$ matrices and in particular establish Crouzeix's conjecture  for certain $A_t$ matrices via \eqref{eqn:cc}, we will need to understand the zeros of certain cubic polynomials. The needed information is encoded in the following remark.

\begin{remark} \label{remark: Depressed Cubic and Orderings}
 Consider a cubic polynomial given by 
 \begin{equation} \label{eqn:R} R(x) = ax^3+bx^2+cx+d,\end{equation}
 for $a, b, c, d \in \mathbb{R}$ and let $x_0$, $x_1$,  $x_2$ denote the zeros of $R(x)$. Further, assume that we know that $x_0, x_1, x_2 \in \mathbb{R}$ and they are not all the same. To find the formulas for these zeros, we first convert $R$ into a depressed cubic $Q$ via the formula $Q(x)= R\left(x-\frac{b}{3a}\right).$ This yields
 \begin{align*}Q(x)&=x^{3}+px+q, \ \ \text{ where } \ \ p=\frac{3 a c-b^2}{3 a^2} \text{ and } q=\frac{2b^3-9abc+27a^2 d}{27 a^3}.\end{align*}
Because $R$ has real zeros that are not all the same, we can assume that $p \ne 0$. If we additionally know that
 \begin{equation} \label{eqn:G}  -1 \le G \le 1, \ \text{ where } \ \ G:=\frac{3 q\sqrt{-\frac{3}{p}} }{2 p}, \end{equation}
 then the zeros $z_0, z_1, z_2$ of $Q$ are given by
\begin{equation}z_k = 2 \sqrt{-\frac{p}{3}} \cos \left(\frac{1}{3} \cos ^{-1}\left(\frac{3 q\sqrt{-\frac{3}{p}} }{2 p}\right)-\frac{2 \pi  k}{3}\right) \text{ for $k=0,1,2$.}\label{eqn: wk}\end{equation}
These formulas are well known and appear for example in \cite{Nick93}. Then the zeros of $R(x)$ are given by 
\begin{equation} \label{eqn:xk} x_k = z_k-\frac{b}{3a}, \ \ \text{ for }  \ \ k=0,1,2.\end{equation}
 It is also true that $x_2 \le x_1 \le x_0.$ To see this, observe that standard trigonometric identities imply that if $s\in[0,\pi],$ then  $$  \cos \left(\frac{s}{3}\right) \ge \cos \left(\frac{s}{3} -\frac{2 \pi  }{3}\right) \ge \cos \left(\frac{s}{3} -\frac{4 \pi  }{3}\right).$$  Applying this with s = $\cos ^{-1}\left(\frac{3q\sqrt{-\frac{3}{p}} }{2 p}\right)$ gives the desired ordering. 
\end{remark}

We now use the ideas in Remark \ref{remark: Depressed Cubic and Orderings} to study the norm of the matrix $X_t$.

\begin{theorem} \label{thm:norm1} For $t \in [0,1)$,
the matrix $X_t$ from \eqref{eqn:Xt4} satisfies $\|X_t\|=1.$ 
\end{theorem}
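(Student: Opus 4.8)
The plan is to exploit the block structure of $X_t$ and reduce the $4\times 4$ operator-norm computation to a $3\times 3$ singular-value bound, and then to extract the relevant singular value in closed form using Remark \ref{remark: Depressed Cubic and Orderings}. First I would observe that $X_t$ from \eqref{eqn:Xt4} is block diagonal, $X_t = 1 \oplus Y_t$, where $Y_t$ is the $3\times 3$ upper-triangular lower-right block. Since the first column of $X_t$ is $e_1$, we have $\|X_t\| \ge \|X_t e_1\| = 1$; equivalently $1$ is a singular value of $X_t$. Because the singular values of a block-diagonal matrix are the union of the singular values of the blocks, it follows that $\|X_t\| = \max(1, \|Y_t\|)$, so it suffices to prove $\|Y_t\| \le 1$ for every $t \in [0,1)$.

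Next I would pass to $M_t := Y_t^{T} Y_t$, a real symmetric positive semidefinite $3\times 3$ matrix whose eigenvalues are exactly the squared singular values of $Y_t$, so that $\|Y_t\|^2$ equals the largest eigenvalue of $M_t$. Writing out the entries of $M_t$ (a routine product), its characteristic polynomial $R(x) = \det(xI - M_t)$ is a monic real cubic whose coefficients are polynomials in $t$ (and $\sqrt{5}$). Its three roots are real and nonnegative, and at $t=0$ they are the distinct numbers $a^2, d^2, f^2$ with $a=\tfrac{1+\sqrt 5}{4}$, $d=\tfrac{3+\sqrt5}{8}$, $f=\tfrac{2+\sqrt5}{8}$; hence the roots are not all equal, and one checks that the hypothesis \eqref{eqn:G} holds. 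Remark \ref{remark: Depressed Cubic and Orderings} then supplies the closed forms \eqref{eqn: wk}--\eqref{eqn:xk} for the three roots together with the ordering $x_2 \le x_1 \le x_0$, so that $\|Y_t\|^2 = x_0(t)$ is an explicit continuous function of $t$ on $[0,1)$.

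The remaining, and hardest, step is to show $x_0(t) \le 1$ on $[0,1)$. I would argue exactly as in the proofs of Theorems \ref{thm:55} and \ref{thm:n27}: at $t=0$ we have $x_0(0) = a^2 = \left(\tfrac{1+\sqrt5}{4}\right)^2 < 1$, and $x_0$ is continuous, so if $x_0(t_*) > 1$ for some $t_* \in (0,1)$ then the intermediate value theorem produces a $t' \in (0,1)$ with $x_0(t') = 1$. But $x_0(t')=1$ forces $1$ to be a root of $R$ at $t=t'$, i.e. $\det(I - M_{t'}) = 0$. Since $t \mapsto \det(I - M_t)$ is an explicit polynomial (after clearing the $\sqrt5$ bookkeeping), I would locate its roots and verify that none lie in $(0,1)$, yielding the contradiction; thus $x_0(t) < 1$ throughout, and $\|X_t\| = \max(1,\|Y_t\|) = 1$.

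The main obstacle is precisely this last polynomial-root-location computation, together with the algebraic manipulation needed to clear the nested radicals from $\det(I - M_t)$ and from the closed form for $x_0(t)$; a secondary, more mechanical nuisance is verifying the Remark's applicability hypotheses (real, not-all-equal roots and $G \in [-1,1]$) uniformly in $t$. I note that one could instead bypass the cubic formula entirely and prove $I - M_t \succeq 0$ by Sylvester's criterion, checking that the three leading principal minors of $I - M_t$ are nonnegative on $[0,1)$; the first minor $1-a^2$ is a positive constant, and the last minor is again $\det(I-M_t)$, so this route meets the same core obstacle. I would nonetheless favor the closed-form approach since it aligns with Remark \ref{remark: Depressed Cubic and Orderings} and the intermediate-value strategy already established for the earlier theorems.
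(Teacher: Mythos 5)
Your proposal is correct, and its setup coincides with the paper's: both reduce $\|X_t\|$ to the largest zero $x_0(t)$ of the same cubic factor $R$ of $\det(X_t^*X_t-xI)$ (your block decomposition $X_t=1\oplus Y_t$ is simply a cleaner explanation of why the factor $(x-1)$ splits off), and both invoke Remark \ref{remark: Depressed Cubic and Orderings} for the closed form and ordering of the roots. The decisive step, however, is genuinely different. The paper bounds $x_0(t)=2\sqrt{-p/3}\,\cos(\cdot)-\tfrac{b}{3a}$ directly by the triangle inequality, showing $\bigl|2\sqrt{-p/3}\bigr|<0.35$ and $\bigl|\tfrac{b}{3a}\bigr|<0.51$ via elementary coefficient estimates, so $x_0<0.86<1$; the only computer-assisted root location it needs is for the degree-$11$ polynomial controlling $G'$, which certifies \eqref{eqn:G} uniformly in $t$ --- the step you correctly flag as a ``mechanical nuisance'' but do not carry out. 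You instead run the intermediate-value-theorem contradiction that the paper reserves for Theorems \ref{thm:55}, \ref{thm:n27}, and \ref{thm:44}: if $x_0(t')=1$ then $\det(I-M_{t'})=0$, and one locates the roots of that explicit polynomial in $t$. This works --- the paper's estimate shows every eigenvalue of $M_t$ stays below $0.86$, so your polynomial indeed has no roots in $(0,1)$ --- and, as you observe, your route (or its Sylvester variant) could dispense with the cubic formula and the $G$-monotonicity altogether, since continuity of the top eigenvalue of $M_t$ is all the IVT argument needs. The trade-off is that you replace the paper's clean quantitative bound with a second root-location computation, in which you must also take care that rationalizing away the $\sqrt{5}$ does not introduce extraneous roots that you then misattribute to $\det(I-M_t)$.
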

\begin{proof}
Since $\| X_t\| = \text{max} \{\sqrt{\lambda}: \lambda \in \sigma(X_t^*  X_t)\}$, we need to find $\sigma(X_t^*  X_t)$. To that end, consider the polynomial $
     S(x) = \det\left(X_t^*  X_t - xI\right),$ 
    whose zero set is exactly $\sigma(X_t^* X_t)$. One can check that $S$ factors as $ S(x) = \frac{1}{102400} R(x)(x-1)$, where
\begin{align*}R(x)&=102400 x^3+\left(-320 t^4+5760 \sqrt{5} t^2-28800 t^2-28800 \sqrt{5}-75200\right) x^2 \\&+\left(-546 \sqrt{5} t^4+2374 t^4+1710 \sqrt{5} t^2+4950 t^2+13350 \sqrt{5}+29950\right) x-1800 \sqrt{5}-4025.\end{align*}
Then $\sigma(X_t^*  X_t) = \{ 1, x_0, x_1, x_2\}$, where $x_0, x_1, x_2$ are the (necessarily real and non-negative) zeros of $R$. Looking at the structure of $X_t^*  X_t$, we can further assume that those zeros are not all the same. 

Denote the coefficients of $R$ using $a,b,c,d$ as in \eqref{eqn:R} and define $p$, $q$, and $G$ (functions of $t$) as in Remark \ref{remark: Depressed Cubic and Orderings}. Then $p(t) \ne 0$ in $[0,1].$ To establish \eqref{eqn:G}, first rearrange the terms in the definition of $G$ to conclude that 
\[
G(t) =-\frac{q}{2} \left( \frac{-p}{3} \right)^{-3/2}. \]Taking derivatives and using the fact that $p \ne 0$ on $[0,1]$ implies that 
\[ G'(t) = 0 \text{ on } [0,1] \ \ \ \text{ if and only if }  \ \ \ -3 q(t) p'(t) + 2q'(t)p(t)=0.\]
However, the second equation above is a polynomial equation of degree $11$ in $t$. One can easily use computer software like Mathematica to locate (within some small error) all $11$ of its solutions and conclude that the only one in $[0,1]$ is $t=0$. Since $G'(1/2) <0$, this shows that $G(t)$ is decreasing on $[0,1]$. Thus for $t\in[0,1],$ 
\begin{equation} \label{eqn: Grange}
     -0.542 < G(1) \le G(t) \le G(0) < 0.353.\end{equation}
This establishes \eqref{eqn:G} and so, we can conclude that the largest zero of $R(x)$ is
\begin{equation}
    x_0(t)= 2 \sqrt{-\frac{p}{3}} \cos \left(\frac{1}{3} \cos ^{-1}\left(\frac{3 q\sqrt{-\frac{3}{p}} }{2 p}\right)\right)-\frac{b}{3 a}. \label{eqn:x_k} 
\end{equation}
To complete the proof, we just need to show that $|x_0| \le 1.$ Because $|\cos(x)| \le 1$, we just need to prove that $ |2\sqrt{ \frac{-p}{3}}| + |\frac{b}{3a}| \le 1.$ By removing the only negative coefficient and using $t \in[0,1]$, we have
\begin{align*}
   \left |2\sqrt{ \frac{-p}{3}}\right| &= \frac{1}{480} \sqrt{t^8+36 \left(5-\sqrt{5}\right) t^6-2 \left(711 \sqrt{5}-1534\right) t^4+90 \left(29 \sqrt{5}+125\right) t^2+125 \left(18 \sqrt{5}+47\right)} \\
  &\le \frac{1}{480} \sqrt{1+ 36 \left(5-\sqrt{5}\right) +90 \left(29 \sqrt{5}+125\right)+125 \left(18 \sqrt{5}+47\right)} <  0.35.
 \end{align*}
Meanwhile, the $|\frac{b}{3a}|$ term is a polynomial with positive coefficients. So for $t  \in[0,1]$,
\[
\left |\frac{b}{3a}(t)\right| \le \left |\frac{b}{3a}(1)\right| <0.51. \]
Combining the two estimates shows that $|x_0| \le 1$, which is what we needed to prove.   \end{proof}

We use similar techniques to study $\|X_t^{-1}\|.$

\begin{theorem} \label{thm:XtI}
The matrix $X_t$ from \eqref{eqn:Xt4} satisfies $\|X_t^{-1}\| \le 2.37$ when $t \in [0,1)$ and  $ \|X_t^{-1}\| \le 2$ when $ t \in [0, 0.363].$
\end{theorem}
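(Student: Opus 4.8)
The plan is to reduce the operator-norm bound to a statement about a single eigenvalue and then recycle the cubic-root machinery already set up for Theorem~\ref{thm:norm1}. Since $X_t$ is invertible, $\|X_t^{-1}\| = 1/\sigma_{\min}(X_t)$, and $\sigma_{\min}(X_t)^2 = \lambda_{\min}(X_t^\ast X_t)$. In the proof of Theorem~\ref{thm:norm1} it was shown that $\sigma(X_t^\ast X_t) = \{1, x_0, x_1, x_2\}$, where $x_0, x_1, x_2$ are the real nonnegative zeros of the cubic $R$ from \eqref{eqn:R}, ordered $x_2 \le x_1 \le x_0$, and that $x_0 \le 1$. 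Consequently the smallest eigenvalue of $X_t^\ast X_t$ is exactly $x_2(t)$, so $\|X_t^{-1}\| = 1/\sqrt{x_2(t)}$. The whole theorem therefore becomes a pair of lower bounds on $x_2(t)$: the claim $\|X_t^{-1}\|\le 2.37$ is equivalent to $x_2(t) \ge 1/2.37^2 \approx 0.178$, and $\|X_t^{-1}\|\le 2$ is equivalent to $x_2(t)\ge \tfrac14$.

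First I would record a reference value. At $t=0$ the matrix $X_0$ is diagonal, so $x_2(0)$ is the square of the smallest diagonal entry, namely $\bigl(\tfrac18(2+\sqrt5)\bigr)^2 \approx 0.280$, which exceeds both thresholds $0.178$ and $\tfrac14$. Since $X_t$ depends continuously on $t$ and the eigenvalues of a continuous Hermitian family vary continuously, $x_2(t)$ is continuous on $[0,1)$, and an explicit formula is available from Remark~\ref{remark: Depressed Cubic and Orderings}: because the range bound \eqref{eqn: Grange} guarantees the hypothesis \eqref{eqn:G}, the smallest root is given by \eqref{eqn:xk} with $k=2$. I would use this continuity, not the formula itself, for the estimates.

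For each inequality I would then run the contradiction-plus-polynomial argument used in the proofs of Theorems~\ref{thm:55} and~\ref{thm:n27}. Writing $\tau$ for the relevant threshold ($\tau = 1/2.37^2$ on $[0,1)$, resp.\ $\tau = \tfrac14$ on $[0,0.363]$), suppose $x_2(t_\ast) < \tau$ for some $t_\ast$ in the interval. Since $x_2(0) > \tau$, the intermediate value theorem produces a crossing point $t'$ with $x_2(t') = \tau$; as $\tau$ is then a zero of $R(\,\cdot\,; t')$, substituting $x=\tau$ into $R$ yields a \emph{polynomial equation in $t'$} (the coefficients $a,b,c,d$ being polynomials in $t$). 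I would clear the surd terms in $\sqrt5$ exactly as in Theorem~\ref{thm:n27} to obtain an honest polynomial $q_\tau(t')=0$, use Mathematica to locate all of its roots, and verify that none lie in the open interval in question; for the first part the relevant root should sit at the endpoint $t=1$ (so that the bound $2.37$ is attained only in the limit), and for the second part all roots avoid $(0,0.363]$. This contradiction forces $x_2(t) \ge \tau$ throughout, giving the two bounds.

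The main obstacle is obtaining a genuinely rigorous lower bound on $x_2(t)$. The crude estimate from Theorem~\ref{thm:norm1}, which bounds $x_0$ via $|\cos|\le 1$, is too weak here: in \eqref{eqn:xk} the factor $\cos\!\bigl(\tfrac13\cos^{-1}(G)-\tfrac{4\pi}{3}\bigr)$ is strictly negative for the relevant $G$-range, so replacing it by $-1$ yields a lower bound that dips below both thresholds, and a sharp direct estimate would require delicate joint control of $2\sqrt{-p/3}$, the cosine factor, and $b/(3a)$ as functions of $t$. Routing the argument through the single-variable polynomial $q_\tau$ is what makes the thresholds $0.178$ and $\tfrac14$ (and hence the endpoints $2.37$ and $0.363$) provable; the only remaining delicacy is confirming the Mathematica root count and checking that the continuity/IVT reduction is valid up to, but not including, $t=1$.
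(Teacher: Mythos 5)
Your reduction to $\|X_t^{-1}\| = 1/\sqrt{x_2(t)}$, with $x_2$ the smallest zero of $R$, is exactly the paper's starting point, but from there you take a genuinely different route. The paper works directly with the closed form $x_2 = z_2 - \frac{b}{3a}$ from Remark \ref{remark: Depressed Cubic and Orderings}: it bounds $-\frac{b}{3a}(t)$ below by its value at $t=0$, shows via the monotonicity of $2\sqrt{-p/3}$ and of $G$ that $z_2$ attains its minimum on any interval $[0,t^*]$ at the right endpoint, and then evaluates at $t^*=0.363$ and $t^*=1$. You instead substitute the threshold $\tau$ into $R$ and locate the roots in $t$ of the resulting biquadratic polynomial $R(\tau;t)$ --- the same IVT-plus-root-location template the paper uses for Theorems \ref{thm:55}, \ref{thm:n27}, and \ref{thm:44}. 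Both are valid, but they buy different things. The paper's monotonicity argument is lossy: decoupling the minimum of $z_2$ from the minimum of $-\frac{b}{3a}$ is what caps the second claim at $t^*=0.363$ and, for the first claim, yields only $2.83$ rather than the $2.37$ stated in the theorem (an internal discrepancy in the paper's own proof). Your method is sharp: for $\tau=\tfrac14$ the only root of $R(\tfrac14;t)$ in $(0,1)$ sits near $t\approx 0.42$, so your argument would in fact recover the interval $[0,0.42]$ that the paper reaches only numerically (see the discussion after Corollary \ref{cor:44}). Two caveats. First, there is no surd to clear here: $\sqrt{5}$ is a constant, so $R(\tau;t)$ is already a polynomial in $t$; the squaring in Theorem \ref{thm:n27} was needed for terms like $\sqrt{1-t^{2/m}}$ that genuinely depend on $t$. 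Second, for $\tau = 1/2.37^2$ the relevant root of $R(\tau;t)$ lands extremely close to $t=1$, so whether it falls inside or outside $[0,1)$ --- that is, whether $2.37$ is actually an admissible constant or must be nudged upward --- hinges on an exact computation that your writeup correctly flags but does not carry out; this is the one point you would need to settle before the first half of the statement is fully proved.
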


\begin{proof}
Using standard properties of norms and the ordering given in Remark \ref{remark: Depressed Cubic and Orderings}, we have
\[
\|X_t^{-1}\| = \max\left\{ 1, \frac{1}{\sqrt{x_0}}, \frac{1}{\sqrt{x_1}}, \frac{1}{\sqrt{x_2}} \right\} = \frac{1}{\sqrt{x_2}}.
\] 
To find an upper bound for $\|X_{t}^{-1}\|$, we need to find a lower bound for $x_2(t)$ on $[0,1].$ Using the notation in Remark \ref{remark: Depressed Cubic and Orderings}, we can write \(x_2 = z_2 - \frac{b}{3a}\). Since \(- \frac{b}{3a}\) is a positive and increasing polynomial on \([0,1]\), it will be bounded below by its value at $t=0.$ Thus, we can focus on \(z_2\), which has formula
\[
z_2(t) = 2 \sqrt{\frac{-p}{3}} \cos \left( \frac{1}{3} \cos^{-1}(G(t)) - \frac{4\pi}{3} \right),
\]
where \(G(t)\) is from \eqref{eqn:G}. Using the formula for $2 \sqrt{\frac{-p}{3}}$ from the proof of Theorem \ref{thm:norm1}, one can easily see that it is continuous, positive, and increasing on \([0,1]\). Furthermore, the proof of Theorem \ref{thm:norm1} implies that $G$ is decreasing on $[0,1]$ and $G([0,1]) \subseteq [-0.542, 0.353]$. Then,  \(\cos^{-1}(G([0,1]))\subseteq [0,\pi]\) and for $t \in[0,1]$, 
\[ \cos^{-1}\left(G(t)\right) - \frac{4 \pi }{3} \in [\tfrac{-4\pi}{3}, \tfrac{-\pi}{3}].\] 
Fix $t^* \in [0,1]$. Then we can draw the following conclusions:
\begin{itemize}
\item \(G(t)\) attains its minimum on $[0,t^*]$ at \(t = t^*\). 
\item Since $\cos^{-1}(x)$ is a decreasing function, \(\cos^{-1}(G(t))\) attains its maximum on $[0, t^*]$ at  $t = t^*$. 
\item Since $\cos(x)$ is decreasing on $[\tfrac{-4\pi}{3}, \tfrac{-\pi}{3}]$, the cosine term in $z_2$ attains its minimum on $[0,t^*]$ at $t=t^*$. 
\item Since $f$ is increasing and positive and the cosine term is negative on $[0,t^*]$, $z_2$ attains its minimum on $[0,t^*]$ at $t=t^*$.
\end{itemize}
Thus, we have
\[
\frac{1}{\|X_t^{-1}\|} = \sqrt{x_2} = \sqrt{z_2(t) - \frac{b}{3a}(t)} \ge \sqrt{z_2(t^*) - \frac{b}{3a}(0)}.
\]
Selecting \( t^* = 0.363 \), we find
\[
\|X_t^{-1}\| \le \frac{1}{\sqrt{z_2(0.363) - \frac{b}{3a}(0)}} < 1.9999,
\] for \( t \in [0, 0.363] \). Similarly, by setting \( t^* = 1 \), it follows that
\[
\|X_t^{-1}\| \le \frac{1}{\sqrt{z_2(1) - \frac{b}{3a}(0)}} \le 2.83,
\]
for \( t \in [0, 1) \), which completes the proof.
\end{proof}

Theorems \ref{thm:norm1} and \ref{thm:XtI} imply the immediate corollary:

\begin{corollary} \label{cor:44}
Let $n=4$ and $A_t$ be given in \eqref{eqn:At}. Then $A_t$ satisfies Crouzeix's conjecture for $t \in [0,0.363].$ 
\end{corollary}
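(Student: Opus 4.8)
The plan is to treat this as an immediate consequence of the two preceding norm estimates combined with the general machinery of Subsection \ref{sec:nil0}. Recall that for $n=4$, the four-step construction there already produced the polynomial $F$, the matrix $B_t = F^{-1}(A_t)$ with Jordan decomposition $X_t J_4 X_t^{-1}$ (for the explicit $X_t$ in \eqref{eqn:Xt4}), and the key inequality \eqref{eqn:cc}, namely
\[
\|p(A_t)\| \le \|X_t\|\,\|X_t^{-1}\| \sup_{z\in W(A_t)}|p(z)|
\]
for every polynomial $p$. This inequality already reduces Crouzeix's conjecture for $A_t$ to the single scalar estimate $\|X_t\|\,\|X_t^{-1}\| \le 2$, so all that remains is to control that product on the stated interval.

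First I would invoke Theorem \ref{thm:norm1}, which gives $\|X_t\| = 1$ for all $t \in [0,1)$; this collapses the product to $\|X_t^{-1}\|$ alone. Next I would invoke Theorem \ref{thm:XtI}, which establishes $\|X_t^{-1}\| \le 2$ precisely for $t \in [0, 0.363]$. Combining these two facts yields $\|X_t\|\,\|X_t^{-1}\| = \|X_t^{-1}\| \le 2$ for every $t$ in that range. Substituting this bound back into \eqref{eqn:cc} gives, for all polynomials $p$ and all $t\in[0,0.363]$,
\[
\|p(A_t)\| \le 2 \sup_{z\in W(A_t)}|p(z)| = 2 \max_{z\in W(A_t)}|p(z)|,
\]
where the final equality uses that $W(A_t)$ is compact. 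This is exactly the statement \eqref{eqn:cc1} of Crouzeix's conjecture for the matrix $A_t$, completing the argument.

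I should be candid that, at the level of the corollary itself, there is no real obstacle to overcome: the entire analytic difficulty has already been absorbed into Theorems \ref{thm:norm1} and \ref{thm:XtI}. The genuinely hard work was (i) showing the largest eigenvalue of $X_t^\star X_t$ never exceeds $1$, which required the depressed-cubic root formulas and the sign/monotonicity analysis of $G(t)$ from Remark \ref{remark: Depressed Cubic and Orderings}, and (ii) obtaining the lower bound on the smallest root $x_2(t)$ through the monotonicity of $z_2(t)$. The specific cutoff $0.363$ is not intrinsic to this corollary but is simply the value at which the bound $\|X_t^{-1}\|\le 2$ ceases to be verifiable by the monotonicity estimate in Theorem \ref{thm:XtI}; indeed the same scheme yields Crouzeix's conjecture on any subinterval of $[0,1)$ on which one can certify $\|X_t^{-1}\|\le 2$.
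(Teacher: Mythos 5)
Your proposal is correct and is exactly the paper's argument: the corollary is deduced immediately by combining $\|X_t\|=1$ from Theorem \ref{thm:norm1} with $\|X_t^{-1}\|\le 2$ on $[0,0.363]$ from Theorem \ref{thm:XtI}, and feeding the product into the inequality \eqref{eqn:cc}. Your closing observation that all the analytic work lives in the two preceding theorems matches the paper, which treats this as an ``immediate corollary.''
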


As discussed in Remark $5.8$ in \cite{BG21}, numerical work indicates that Corollary \ref{cor:44} should actually hold for all $t\in[0,0.42]$, though proving that analytically seems challenging. Remark $5.8$ in \cite{BG21} also numerically explores $\| X_t\|  \| X_t^{-1} \|$ for $A_t$ as in \eqref{eqn:At} with $n=5$. When $n \ge 6$, there are no longer simple formulas for $X_t$. However, the $X_t$ matrices can still be computed and studied numerically for  larger $n$ values. We investigated this and record our findings in the following remark.

\begin{remark} \label{rem:68}
For $A_t$ defined via \eqref{eqn:At} with \( n = 6, 7, 8 \), we found matrices $X_t$ using the process outlined in Subsection \ref{sec:nil0} and investigated the associated product $\|X_t\| \ \|X_t^{-1}\|$ for $t\in[0,1]$.
The complexity of the matrices meant that they were not amenable to the 
Maximize function in Mathematica. Instead, we graphed \( \|X_t\|  \|X_t^{-1}\| \) as functions of \( t \). The plots suggest that these functions are increasing, so the maximum value likely occurs at $t=1$. We also found likely intervals where \( \|X_t\|  \|X_t^{-1}\| \) remains below $2$ by finding $t$-values  where the product is below, but very close to, this threshold. Figure \ref{fig:Xtplot} gives the plots of \( \|X_t\|  \|X_t^{-1}\| \) for \( n = 6, 7, 8 \). Table \ref{table:Xt1} summarizes these computations, with the \( t \)-values approximated to the nearest hundredth by rounding down. This computational study suggests the following: for the $n \times n$ matrix $A_t$, Crouzeix's conjecture holds for $t \in [0,0.545]$ when $n=6,$ for $t \in [0,0.580]$ when $n=7,$ and for $t \in [0,0.608]$ when $n=8.$
\end{remark}
\begin{figure}[h!]
  \centering

  \begin{minipage}[b]{0.4\textwidth}
    \centering
    \small 
    \renewcommand*{\arraystretch}{1.2}
    \begin{tabular}{||c|c|c||} 
      \hline
      \( n \) &  $\|X_{t}\| \|X_{t} ^{-1}\|$ Bound  & \( t \)-interval w/ product \( < 2 \) \\
      \hline
      6 & 2.63 & [0, 0.545] \\ 
      \hline
      7 & 2.73 & [0, 0.580] \\  
      \hline 
      8 & 2.81 & [0, 0.608] \\ 
      \hline
    \end{tabular}
    
    \caption{Summary of numerical investigations for $A_{t}$ and $X_t$ with larger $n$ values.}
    \label{table:Xt1}
  \end{minipage} 
  \qquad     \qquad   
  \begin{minipage}[b]{0.5\textwidth}
    \centering
    \includegraphics[width=2.5in]{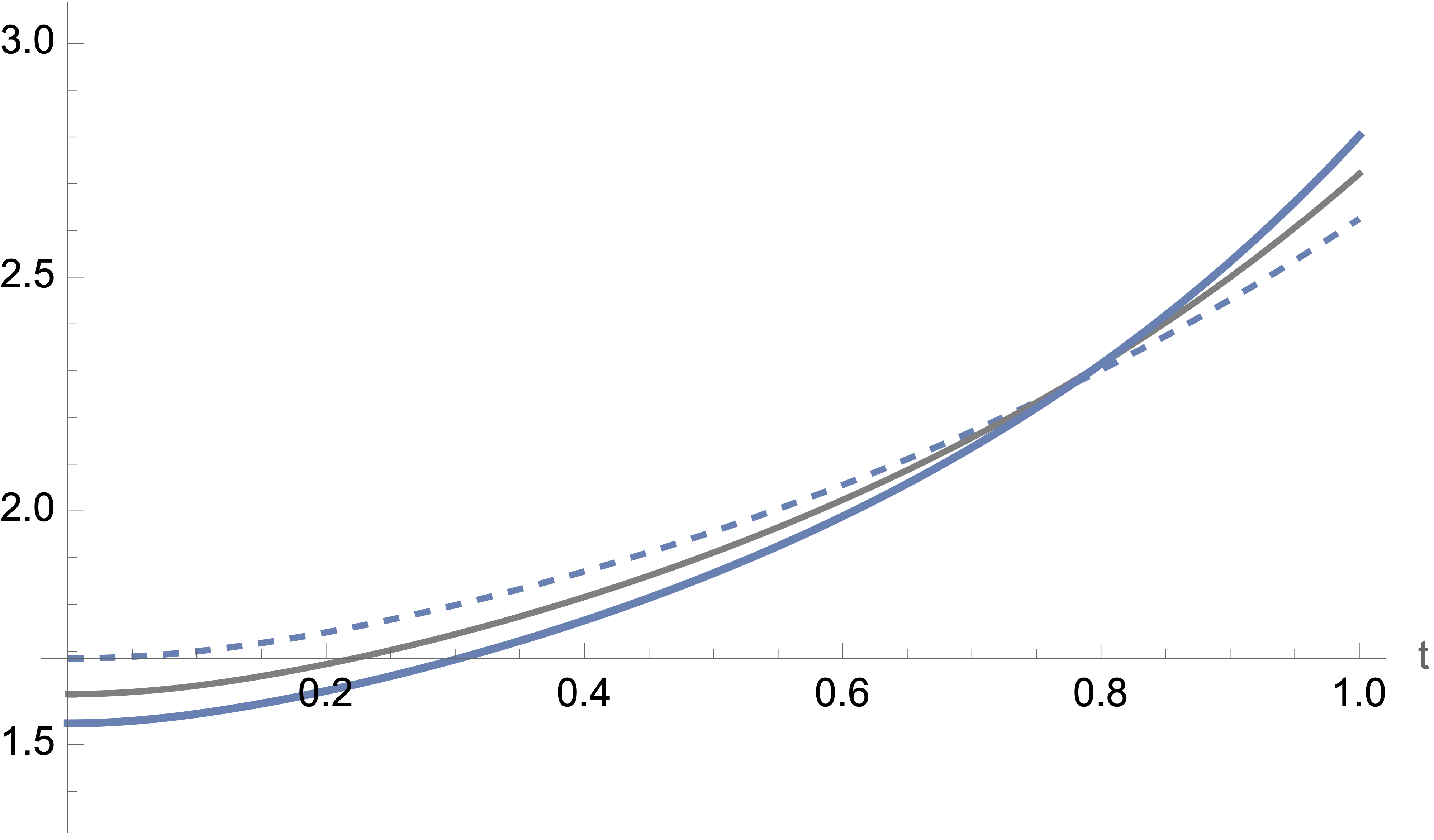} 
    \caption{Plots of \( \|X_t\| \|X_t^{-1}\| \) for \( n=6 \) (dashed), \( n=7 \) (gray), and \( n=8 \) (black).}
    \label{fig:Xtplot}
  \end{minipage}
\end{figure}


\subsection{A new class of nilpotent matrices}  \label{subsec:nil2}
In this section, we study a related class of nilpotent matrices $A_{t,m}$ given as follows 
$$A_{t,m} = \left(
\begin{array}{cccc}
 0 & 1 & t & t^m \\
 0 & 0 & 1 & t \\
 0 & 0 & 0 & 1 \\
 0 & 0 & 0 & 0 \\
\end{array} \label{eqn:newAt}
\right), \text{ where } m \ge 2.$$
Following the steps in Subsection \ref{sec:nil0}, let $\vec{x}$ be given by \eqref{eqn:vec1} and let $C_t$ denote the curve given by $\left \langle A_{t,m} \vec{x}(s), \vec{x}(s) \right \rangle$ for $s\in[0, 2\pi].$ This allows us to define $F$, which has the formula
 \[ F(z) = \frac{1}{20} \left(-\left(\left(\sqrt{5}-5\right) z^{3} t^m\right)+4 \sqrt{5} z^{2} t+5 \left(\sqrt{5}+1\right) z \right). \] 
 With $F$ in hand, we can find matrices $B_{t,m}$ and $X_{t,m}$ such that  $F(B_{t,m}) = A_{t,m}$ and $B_{t,m} = X_{t,m} J_4 X_{t,m}^{-1}$. The explicit formulas for $B_{t,m}$ and $X_{t,m}$ are
\[ B_{t,m} = \left(
\begin{array}{cccc}
 0 & \sqrt{5}-1 & \left(\frac{21}{\sqrt{5}}-9\right) t & \frac{32 \left(7 \left(\sqrt{5}+5\right) t^m-24 \left(\sqrt{5}-1\right) t^2\right)}{5 \left(\sqrt{5}+1\right)^5} \\
 0 & 0 & \sqrt{5}-1 & \left(\frac{21}{\sqrt{5}}-9\right) t \\
 0 & 0 & 0 & \sqrt{5}-1 \\
 0 & 0 & 0 & 0 \\
\end{array}
\right)  \]and 
  \begin{equation}  X_{t,m} = \left(
\begin{array}{cccc}
 1 & 0 & 0 & 0 \\
 0 & \frac{1}{4} \left(\sqrt{5}+1\right) & \frac{3}{40} \left(\sqrt{5}-5\right) t & \frac{6 t^2-7 t^m}{8 \sqrt{5}} \\
 0 & 0 & \frac{1}{8} \left(\sqrt{5}+3\right) & -\frac{3 t}{4 \sqrt{5}} \\
 0 & 0 & 0 & \frac{1}{8} \left(\sqrt{5}+2\right) \\
\end{array}
\right).\label{eqn:Xt4fornewfamily} \end{equation}
Using arguments similar to those in the previous subsection, we will explore Crouzeix's conjecture for these matrices $A_{t,m}$. As described in Subsection \ref{sec:nil0}, we need to understand the norms $\| X_{t,m}\|$ and $\| X_{t,m}^{-1} \|$. We first have the following result.
\begin{theorem} \label{thm:Xt2}
For \( m \in \{2, 3, 4\} \), the matrix \( X_{t,m} \) from \eqref{eqn:Xt4fornewfamily} satisfies \( \| X_{t,m} \| \leq 1 \) for all \( t \in [0,1) \). 
\end{theorem}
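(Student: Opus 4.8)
The plan is to follow the strategy of Theorem \ref{thm:norm1} essentially verbatim, taking advantage of the block structure of $X_{t,m}$. Because the first row and first column of $X_{t,m}$ in \eqref{eqn:Xt4fornewfamily} are $(1,0,0,0)$, the matrix splits as a direct sum $X_{t,m} = [1] \oplus Y_{t,m}$, where $Y_{t,m}$ is the lower-right $3\times 3$ upper-triangular block. Consequently $X_{t,m}^* X_{t,m} = [1] \oplus Y_{t,m}^* Y_{t,m}$, so that $\sigma(X_{t,m}^* X_{t,m}) = \{1\} \cup \sigma(Y_{t,m}^* Y_{t,m})$ and $\|X_{t,m}\| = \max\{1, \|Y_{t,m}\|\}$. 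It therefore suffices to show that every eigenvalue of the positive semidefinite matrix $Y_{t,m}^* Y_{t,m}$ is at most $1$.

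First I would compute $\det(X_{t,m}^* X_{t,m} - xI)$ and verify that it factors as a constant multiple of $(x-1)R(x)$, where $R(x) = \det(Y_{t,m}^* Y_{t,m} - xI)$ is a cubic with coefficients $a,b,c,d$ that are polynomials in $t$ depending on the fixed $m$. Its three roots are the real, nonnegative eigenvalues of $Y_{t,m}^* Y_{t,m}$, and since $Y_{t,m}$ has distinct diagonal entries $\tfrac14(\sqrt5+1)$, $\tfrac18(\sqrt5+3)$, $\tfrac18(\sqrt5+2)$ — hence distinct eigenvalues at $t=0$ — the roots are not all equal, so that the quantity $p$ from Remark \ref{remark: Depressed Cubic and Orderings} is nonzero (as in Theorem \ref{thm:norm1}, nonvanishing of $p$ on all of $[0,1]$ follows from the structure of $X_{t,m}^* X_{t,m}$). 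For each $m \in \{2,3,4\}$ separately, I would then form $p$, $q$, and $G$ as in \eqref{eqn:G}, verify the casus-irreducibilis condition $-1 \le G(t) \le 1$ on $[0,1]$, and invoke \eqref{eqn: wk} to obtain the closed form for the largest root, namely $x_0(t) = 2\sqrt{-p/3}\,\cos\!\big(\tfrac13 \cos^{-1} G\big) - \tfrac{b}{3a}$.

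The final step is to bound $x_0(t) \le 1$. Exactly as in Theorem \ref{thm:norm1}, since $|\cos| \le 1$ it is enough to establish the cleaner inequality $2\sqrt{-p/3} + \big|\tfrac{b}{3a}\big| \le 1$ for $t \in [0,1]$. I would bound the radical term by discarding the negative monomials inside $-p$ (which only increases the radicand and hence gives an upper bound) and evaluating the resulting positive-coefficient polynomial at $t=1$, and I would bound $\big|\tfrac{b}{3a}\big|$, a ratio of polynomials with controlled signs, by its value at an endpoint; summing the two estimates should fall below $1$ in each of the three cases.

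The main obstacle I anticipate is computational rather than conceptual: the $(2,4)$ entry $\tfrac{6t^2 - 7t^m}{8\sqrt5}$ makes the cubic coefficients — and therefore $p$, $q$, $G$, and the final radical bound — genuinely $m$-dependent, so both the verification of $-1 \le G \le 1$ and the endpoint estimate $2\sqrt{-p/3} + \big|\tfrac{b}{3a}\big| \le 1$ must be carried out (with computer algebra) in each of the cases $m = 2,3,4$. This coupling of $t^2$ and $t^m$ is precisely what obstructs a single uniform argument and accounts for the restriction to $m \in \{2,3,4\}$. The step most likely to require care is confirming that $G$ remains in $[-1,1]$ throughout $[0,1]$, which, should the naive bound fail, would be handled by the monotonicity-of-$G$ argument (examining the sign of $-3q p' + 2q' p$) used in the proof of Theorem \ref{thm:norm1}.
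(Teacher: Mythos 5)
Your proposal is correct and follows essentially the same route as the paper's proof: factor $\det(X_{t,m}^*X_{t,m}-xI)$ into $(x-1)$ times a cubic, apply the depressed-cubic machinery of Remark \ref{remark: Depressed Cubic and Orderings} (including the monotonicity-of-$G_m$ argument, which the paper does in fact need for each $m$), and bound the largest root via $2\sqrt{-p_m/3}+|b/(3a)|\le 1$ by discarding negative coefficients and evaluating at $t=1$. Your direct-sum observation $X_{t,m}=[1]\oplus Y_{t,m}$ is a pleasant way to explain the $(x-1)$ factor but does not change the argument.
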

\begin{proof}
As in the proof of Theorem \ref{thm:norm1}, we can consider the polynomial
\[
S(x) = \det\left(X_{t,m}^* X_{t,m} - xI\right),
\]
since the square root of its largest zero is exactly $\| X_{t,m}\|$. This polynomial factors as $S(x) = \frac{1}{102400} R(x)(x-1)$, where
\begin{align*}
R(x) =\ & x^2 \left(-15680 t^{2 m} + 26880 t^{m+2} - 11520 t^4 + 5760 \sqrt{5} t^2 - 28800 t^2 - 28800 \sqrt{5} - 75200\right) \\
        & + x \left(1470 \sqrt{5} t^{2 m} + 3430 t^{2 m} - 2016 \sqrt{5} t^{m+2} - 3360 t^{m+2} + 2304 t^4 + 1710 \sqrt{5} t^2 \right. \\
        & \left. \quad + 4950 t^2 + 13350 \sqrt{5} + 29950\right) \\
        & + 102400 x^3 - 1800 \sqrt{5} - 4025.
\end{align*}
To be consistent with \eqref{eqn:R}, we let $a,b,c,d$ denote the coefficients of $R$ and let $p_m$, $q_m$, and $G_{m}$ be the associated functions (depending on $t$ and $m$) that are defined Remark \ref{remark: Depressed Cubic and Orderings}. Looking at $X_{t,m}$, one can conclude that $R$ does not have a single repeated zero and so $p_{m}(t) \ne 0$ on $[0,1].$ To obtain the needed inequality in \eqref{eqn:G}, we first observe that
\[G_{m}(t) = -\frac{q_m}{2} \left( \frac{-p_m}{3} \right)^{-3/2}. \label{eqn:Gmt}\] 
Now we restrict to the $m=2$ case. Differentiating $G_2$ and using $p_{2} \ne 0$ on $[0,1]$, we can conclude that
\[ G_{2}'(t) = 0 \text{ on } [0,1] \ \ \ \text{ if and only if } \ \ \ 3 q_{2}(t) p_{2}'(t) - 2q_{2}'(t)p_{2}(t)=0.\]
The latter is a polynomial equation of degree 11 in $t$. Using Mathematica, one can pinpoint (within a small marginal error) all $11$ solutions and deduce that the only solution in $[0,1]$ is $t=0$. Given that $G_{2}'(1/2) <0$, this implies that $G_{2}(t)$ is decreasing on $[0,1]$ and so
\begin{equation} \label{eqn: Grange2}
-0.542 < G_{2}(1) \le G_{2}(t) \le G_{2}(0) < 0.353.\end{equation}
Repeating this method for $m=3$ and $m=4$ yields polynomial equations of degrees $17$ and $23$ respectively whose only solution in $[0,1]$ is $t=0$.  Since $G_3'(1/2)$ and $G_4'(1/2)$ are both negative, this implies that $G_3$ and $G_4$ are also decreasing on $[0,1]$. They actually have exactly the same upper and lower bounds as in \eqref{eqn: Grange2}.
Combined with Remark \ref{remark: Depressed Cubic and Orderings}, this implies that for $m \in \{2,3,4\}$, the largest zero of $R$ is
\[
    x_0(t)= 2 \sqrt{-\frac{p_{m}}{3}} \cos \left(\frac{1}{3} \cos^{-1}{ \left( G_{m}(t) \right)}\right)-\frac{b}{3 a}.\]
To finish the proof, we will show that \( |x_0| \leq 1 \). Since \( |\cos(x)| \leq 1 \), we just need \( |2\sqrt{-\frac{p_m}{3}}| + \left|\frac{b}{3a}\right| \leq 1 \). Here are the general formulas:
\begin{align*}
\left| 2\sqrt{ \frac{-p_m}{3} } \right| &= \frac{1}{480} \left( \left(49 t^{2m} - 84 t^{m+2} + 36 t^4 - 18 \left(\sqrt{5}-5\right) t^2 + 90 \sqrt{5} + 235\right)^2 \right. \\
&\quad \left. - 6 \left( 245 \left(3 \sqrt{5}+7\right) t^{2m} - 336 \left(3 \sqrt{5}+5\right) t^{m+2} + 1152 t^4 \right. \right. \\
&\quad \left. \left. + 45 \left(19 \sqrt{5}+55\right) t^2 + 25 \left(267 \sqrt{5}+599\right) \right) \right)^{\frac{1}{2}}
\end{align*}
and 
\[ \left |\frac{b}{3a}\right| = \frac{1}{960} \left(49 t^{2 m}-84 t^{m+2}+36 t^4-18 \left(\sqrt{5}-5\right) t^2+90 \sqrt{5}+235\right). \] 
We first consider $m=2.$ After simplifying, we can remove any negative coefficients and use $t \in [0,1]$ to conclude
 \begin{align*}
\left| 2\sqrt{ \frac{-p_2}{3} } \right| &=\frac{1}{480} \sqrt{t^8-36 \left(\sqrt{5}-5\right) t^6+\left(3068-1422 \sqrt{5}\right) t^4+90 \left(29 \sqrt{5}+125\right) t^2+125 \left(18 \sqrt{5}+47\right)} \\
&\le \frac{1}{480} \sqrt{t^8-36 \left(\sqrt{5}-5\right) t^6+90 \left(29 \sqrt{5}+125\right) t^2+125 \left(18 \sqrt{5}+47\right)} \le 0.35.
\end{align*}
For the $|\frac{b}{3a}|$ term when $m=2$, all of the coefficients are positive after simplifying, so \[ |\frac{b}{3a} (t)| \le|\frac{b}{3a} (1)| \le  0.51. \] Combining these estimates verifies that $|x_0| \le 1$. A similar analysis works for $m=3$ and $m=4$ and so, we omit the details here.
\end{proof}
We now use similar techniques to study $\|X_{t,m}^{-1}\|.$
\begin{theorem}\label{thm:newXtInorm}
 For $m \in \{2,3,4\}$ the matrix $X_{t,m}$ defined in \eqref{eqn:Xt4fornewfamily} satisfies $\| X_{t,m}^{-1}\| \le K_m$ for $t\in[0,1]$ and $\| X_{t,m}^{-1} \| \le 2$ for $t\in [0, t_m^*]$, where $K_m$ and $t_m^*$ are given in the following table:
 \[ 
 \renewcommand*{\arraystretch}{1.2}
\begin{tabular}{||c | c|c||} 
 \hline
 \hspace{0.1cm} $m$  \hspace{0.1cm} & \hspace{0.25cm} $K_m$\hspace{0.25cm} &  \hspace{0.25cm} $t^*_m$ \hspace{0.25cm} \\ [0.5ex]
 \hline
  2 & 2.83 & 0.363\\ 
 \hline
 3 & 2.83&  0.368\\ 
 \hline
 4 & 2.83& 0.367\\ [0.5ex]
 \hline
\end{tabular}
\]
\end{theorem}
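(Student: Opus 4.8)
The plan is to mirror the proof of Theorem \ref{thm:XtI}, reusing the spectral bookkeeping already set up in the proof of Theorem \ref{thm:Xt2}. Recall from there that $\sigma(X_{t,m}^* X_{t,m}) = \{1, x_0, x_1, x_2\}$, where $x_0 \ge x_1 \ge x_2$ are the (real, non-negative) zeros of the cubic $R$ whose coefficients $a,b,c,d$ and associated quantities $p_m, q_m, G_m$ are recorded there. Since $\|X_{t,m}\| \le 1$ by Theorem \ref{thm:Xt2}, every eigenvalue of $X_{t,m}^* X_{t,m}$ is at most $1$, so $x_2 \le x_0 \le 1$ and hence
\[ \|X_{t,m}^{-1}\| = \max\left\{ 1, \tfrac{1}{\sqrt{x_0}}, \tfrac{1}{\sqrt{x_1}}, \tfrac{1}{\sqrt{x_2}}\right\} = \frac{1}{\sqrt{x_2}}. \]
Thus the entire problem reduces to producing a sharp lower bound for the smallest eigenvalue $x_2(t)$ on the relevant $t$-interval.

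First I would invoke Remark \ref{remark: Depressed Cubic and Orderings} to write $x_2 = z_2 - \tfrac{b}{3a}$, where
\[ z_2(t) = 2\sqrt{\tfrac{-p_m}{3}}\,\cos\!\left(\tfrac{1}{3}\cos^{-1}(G_m(t)) - \tfrac{4\pi}{3}\right), \]
exactly as in Theorem \ref{thm:XtI}. The proof of Theorem \ref{thm:Xt2} already supplies the crucial monotonicity input: for each $m \in \{2,3,4\}$, the function $G_m$ is decreasing on $[0,1]$ with $G_m([0,1]) \subseteq [-0.542, 0.353]$. The additional facts I would verify are that the prefactor $2\sqrt{-p_m/3}$ is positive and increasing and that $-\tfrac{b}{3a}$ is positive and increasing on $[0,1]$; both can be read off from the explicit formulas for $2\sqrt{-p_m/3}$ and $b/a$ displayed in the proof of Theorem \ref{thm:Xt2}. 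Granting these, the monotonicity chain runs as before: $G_m$ decreasing forces $\cos^{-1}(G_m)$ increasing, which moves the cosine argument $\tfrac{1}{3}\cos^{-1}(G_m(t)) - \tfrac{4\pi}{3}$ upward while keeping it inside $[-\tfrac{4\pi}{3}, -\pi]$, an interval on which $\cos$ is decreasing; hence the (negative) cosine term decreases while the positive prefactor grows, so $z_2$ is decreasing on $[0,1]$.

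Fixing any $t^* \in [0,1]$, decreasingness of $z_2$ gives $z_2(t) \ge z_2(t^*)$ for $t \in [0,t^*]$, and increasingness of $-\tfrac{b}{3a}$ gives $-\tfrac{b}{3a}(t) \ge -\tfrac{b}{3a}(0)$. Combining the two,
\[ \frac{1}{\|X_{t,m}^{-1}\|} = \sqrt{x_2(t)} = \sqrt{z_2(t) - \tfrac{b}{3a}(t)} \ge \sqrt{z_2(t^*) - \tfrac{b}{3a}(0)} \qquad \text{for } t \in [0,t^*]. \]
Taking $t^* = t_m^*$ and checking (numerically, as with the $G_m$ computations) that $z_2(t_m^*) - \tfrac{b}{3a}(0) \ge \tfrac14$ yields $\|X_{t,m}^{-1}\| \le 2$ on $[0, t_m^*]$; taking $t^* = 1$ and evaluating $z_2(1) - \tfrac{b}{3a}(0)$ yields the global constant $K_m$. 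Running these two endpoint evaluations for each $m \in \{2,3,4\}$ fills in the three rows of the table.

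The hard part will be re-establishing the monotonicity of $2\sqrt{-p_m/3}$ (and of $-\tfrac{b}{3a}$) for this new family. Unlike the $X_t$ case of Theorem \ref{thm:XtI}, the cubic $R$ here carries extra $t$-dependent terms ($t^{2m}$, $t^{m+2}$, $t^4$), so these prefactors are genuinely different functions for each $m$ and their increase on $[0,1]$ must be re-verified, most cleanly by a derivative-sign argument or by the same Mathematica-assisted checks used for $G_m$ in Theorem \ref{thm:Xt2}. Should any prefactor fail to be monotone, the clean ``minimum-at-the-endpoint'' reduction would break and one would instead have to optimize $x_2(t)$ directly; I expect, however, that the monotone structure persists for $m = 2,3,4$, so that the endpoint evaluations above suffice.
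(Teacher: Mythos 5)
Your proposal is correct and follows essentially the same route as the paper: reduce to bounding the smallest zero $x_2$ of the cubic $R$ from below, use the monotonicity of $G_m$ (inherited from the proof of Theorem \ref{thm:Xt2}) together with the positivity and monotonicity of the prefactor $2\sqrt{-p_m/3}$ and of $-\tfrac{b}{3a}$ to show the relevant part of $x_2$ is minimized at the right endpoint, and then evaluate at $t^*=t_m^*$ and $t^*=1$. The ``hard part'' you flag is handled in the paper exactly as you anticipate, by checking the coefficients of each $p_m$ and of $b/a$ directly for $m=2,3,4$ (and indeed this is precisely where the argument breaks for larger $m$, as the paper notes afterward).
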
 
\begin{proof} In Theorem \ref{thm:Xt2}, we considered
\[
S(x) = \det\left(X_{t,m}^* X_{t,m} - xI\right) =\frac{1}{102400} R(x)(x-1),
\]
for a polynomial $R(x).$ Then \( \|X_{t,m}^{-1}\| = \frac{1}{\sqrt{x_2}}, \) where $x_2$ is the smallest zero of $R(x)$. Using  Remark~\ref{remark: Depressed Cubic and Orderings}, \(x_2\) has form
\begin{equation}
x_2(t) = f_m(t) \cos \left(\frac{1}{3} \cos^{-1}\left(G_{m}(t)\right) - \frac{4 \pi }{3}\right) - \frac{b}{3a},
\end{equation}
where \(G_m(t)\) is from \eqref{eqn:Gmt} and $f_m(t) = 2 \sqrt{\frac{-p_m}{3}}$ are both defined using the coefficients of $R(x).$ To find an upper bound for $\|X_{t,m}^{-1}\|$, we need to find a lower bound for $x_2(t)$ on $[0,1]$.

For each \(m \in \{2, 3, 4\}\), the term \(\frac{-b}{3a}\) is continuous, positive, and increasing. Thus it is minimized at $t=0$ and we can focus on \(H_m(t):=x_2 +\frac{b}{3a}\), which is the component of $x_2$ excluding \(\frac{-b}{3a}\), and analyze its behavior. One can check the coefficients of each $p_m$
to conclude that each $f_m$ is increasing on $[0,1]$. By the proof of Theorem \ref{thm:Xt2}, each $G_m$ is decreasing on $[0,1]$, and \(G_m([0,1])\subseteq [-0.542, 0.353]\). This implies that  \(\cos^{-1}(G_m([0,1]))\subseteq [0,\pi]\) and thus for $t \in[0,1]$, 
\[ \cos^{-1}\left(G_{m}(t)\right) - \tfrac{4 \pi }{3} \in [\tfrac{-4\pi}{3}, \tfrac{-\pi}{3}].\]
Fix $t^*\in[0,1]$. Then we can conclude that
\begin{enumerate}
\item \(G_m(t)\) attains its minimum on $[0,t^*]$ at \(t = t^*\). 
\item Since $\cos^{-1}(x)$ is a decreasing function, \(\cos^{-1}(G_m(t))\) attains its maximum on $[0, t^*]$ at  $t = t^*$. 
\item Since $\cos(x)$ is decreasing on $[\tfrac{-4\pi}{3}, \frac{-\pi}{3}]$, the cosine term in $H_m(t)$ attains its minimum on $[0,t^*]$ at $t=t^*$. 
\item Since $f_m$ is increasing and positive and the cosine term is negative on $[0,t^*]$, $H_m(t)$ attains its minimum on $[0,t^*]$ at $t=t^*$.
\end{enumerate}
Therefore, for \(m = 2, 3, 4\), we have
\[
\frac{1}{\|X_{t,m}^{-1}\|} = \sqrt{x_2} = \sqrt{H_m(t) - \frac{b}{3a}(t)} \ge \sqrt{H_m(t^*) - \frac{b}{3a}(0)}.
\]
By selecting \(t^* = 0.363\) for \(m = 2\), we obtain
\[
\|X_{t,m}^{-1}\| \le \frac{1}{\sqrt{H_m(0.363) - \frac{b}{3a}(0)}} < 1.9999
\]
for \(t \in [0, 0.363]\). Similarly, setting \(t^* = 1\), we find
\[
\|X_{t,m}^{-1}\| \le \frac{1}{\sqrt{H_m(1) - \frac{b}{3a}(0)}} \le 2.83
\]
for \(t \in [0, 1]\). Analogous estimates give the values in the table for $m=3$ and $m=4,$ which completes the proof.
\end{proof}

We have the following immediate corollary.

\begin{corollary} \label{cor:new44}
Let $A_{t,m}$ be given in \eqref{eqn:newAt}. For $m \in \{2,3,4\},$ $A_{t,m}$ satisfies Crouzeix's conjecture for $t \in [0,t_m^*],$ where $t_m^*$ is from Theorem \ref{thm:newXtInorm}.
\end{corollary}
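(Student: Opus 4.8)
The plan is to apply the general framework of Subsection \ref{sec:nil0} to the family $A_{t,m}$ in exactly the way it was applied to $A_t$ in the proof of Corollary \ref{cor:44}; the statement is a formal consequence of the two preceding theorems. All of the data needed to run that framework have already been assembled in the paragraphs before Theorem \ref{thm:Xt2}: the polynomial $F$ is given explicitly, satisfies $F(0)=0$ and $F'(0)=\tfrac14(\sqrt5+1)\neq0$ (so it is locally invertible at the origin), and the explicit matrices $B_{t,m}$ and $X_{t,m}$ obey $F(B_{t,m})=A_{t,m}$ and $B_{t,m}=X_{t,m}J_4X_{t,m}^{-1}$. First I would note, as in Step~2 of Subsection \ref{sec:nil0}, that the curve $C_t=\langle A_{t,m}\vec x(s),\vec x(s)\rangle$ lies in $W(A_{t,m})$ by the definition of the numerical range, and that since $F$ is a polynomial with $F(0)=0$ and $F(\mathbb{T})=C_t$, the image $F(\mathbb{D})$ is contained in the convex hull of $C_t$, hence in $W(A_{t,m})$.

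With these facts in hand, the estimate \eqref{eqn:cc} transfers verbatim. Fixing an arbitrary $p\in\mathbb{C}[z]$ and using $F(B_{t,m})=A_{t,m}$, the similarity $B_{t,m}=X_{t,m}J_4X_{t,m}^{-1}$, submultiplicativity of the operator norm, von Neumann's inequality for the contraction $J_4$, and the inclusion $F(\mathbb{D})\subseteq W(A_{t,m})$, I would obtain
\[
\|p(A_{t,m})\|\le \|X_{t,m}\|\,\|X_{t,m}^{-1}\|\sup_{z\in W(A_{t,m})}|p(z)|.
\]
The remaining task is purely to control the constant $\|X_{t,m}\|\,\|X_{t,m}^{-1}\|$.

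That control is handed to us directly: Theorem \ref{thm:Xt2} gives $\|X_{t,m}\|\le1$ for all $t\in[0,1)$ and $m\in\{2,3,4\}$, and Theorem \ref{thm:newXtInorm} gives $\|X_{t,m}^{-1}\|\le2$ for $t\in[0,t_m^*]$. Multiplying the two bounds yields $\|X_{t,m}\|\,\|X_{t,m}^{-1}\|\le2$ on $[0,t_m^*]$, so the displayed inequality reduces to $\|p(A_{t,m})\|\le2\sup_{z\in W(A_{t,m})}|p(z)|$. Since $p$ was arbitrary, this is precisely Crouzeix's conjecture for $A_{t,m}$ on $[0,t_m^*]$.

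There is no genuine obstacle here, since all of the analytic difficulty has been absorbed into Theorems \ref{thm:Xt2} and \ref{thm:newXtInorm}. The only point that warrants a sentence of verification is that the machinery of Subsection \ref{sec:nil0}—the existence and explicit shape of $B_{t,m}$ and $X_{t,m}$, and the fact that $J_4$ is a contraction—carries over to the $A_{t,m}$ family without modification, which is immediate from the explicit formulas already recorded before Theorem \ref{thm:Xt2}.
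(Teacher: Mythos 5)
Your proposal is correct and follows exactly the paper's own route: the paper proves this corollary by citing the chain of inequalities \eqref{eqn:cc} together with Theorems \ref{thm:Xt2} and \ref{thm:newXtInorm}, which is precisely the argument you spell out. You simply unpack the details of \eqref{eqn:cc} that the paper leaves implicit, so there is nothing to add or correct.
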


\begin{proof} This is an immediate consequence of \eqref{eqn:cc}, Theorem \ref{thm:Xt2}, and Theorem \ref{thm:newXtInorm}.
\end{proof}

One can also explore $A_{t,m}$ matrices for higher values of $m$ and different dimensions. Our numerical explorations in this direction are discussed in the following remark.

\begin{remark} In Theorem \ref{thm:newXtInorm}, we restricted to $m\in\{2,3,4\}$ because, for higher  values of $m$, some of the key functions used in the proof are no longer monotonic. Thus, the arguments employed in 
Theorem \ref{thm:newXtInorm} fail for those values of $m$ and more complicated arguments would be required to obtain an analytic proof of an upper bound for $\| X^{-1}_{t,m}\|$.

Nonetheless, one can use computational methods to explore versions of Theorem \ref{thm:newXtInorm} and Corollary \ref{cor:new44} for additional values of $m$. One can also explore similar $A_{t,m}$ of different sizes $n$ beyond the $n=4$ case. For example, the $5\times 5$ $A_{t,m}$ has formula given below
\[A_{t,m} =\left( \begin{array}{ccccc}
 0 & 1 & t & t & t^m \\
 0 & 0 & 1 & t & t \\
 0 & 0 & 0 & 1 & t \\
 0 & 0 & 0 & 0 & 1 \\
 0 & 0 & 0 & 0 & 0 \\
\end{array}\right) \]
and general $n \times n$ $A_{t,m}$ can be similarly defined.
For such $A_{t,m}$, one can follow the process outlined in Subsection \ref{sec:nil0}, obtain a related matrix $X_{t,m}$, and investigate Crouzeix's conjecture.
We explored these additional cases using the Maximize command in Mathematica for the $n=4$ case, and by graphical methods for the $n=5$ and $n=6$ cases.  Figure \ref{table:1} contains a summary of our numerical investigations. Note that in the setting of different $m$-values but a fixed $n$-value, the table appears to shows that $\|X_{t,m} \|  \|X_{t,m} ^{-1}\|<2$ (and hence, $A_{t,m}$ satisfies Crouzeix's conjecture) on the same $t$-interval. These intervals are actually different in practice, but they appear to be the same here due to rounding down.

 \begin{figure}[h!]
\renewcommand*{\arraystretch}{1.2}
\begin{tabular}{||c |c |c| c||} 
 \hline
 $n$-value  & $m$-value & Upper Bound for $\|X_{t,m}\| \dot \|X_{t,m} ^{-1}\|$ & $t$ interval with $\|X_{t,m}\| \dot \|X_{t,m} ^{-1}\|<2$ \\ [0.5ex] 
 \hline
 4 & 5 & 2.38 & [0,0.438] \\ 
  \hline
 4 & 6 & 2.38 & [0,0.438] \\
  \hline
 4 & 7 & 2.38 & [0,0.438] \\ 
  \hline
 5 & 2 & 2.51 & [0,0.364] \\
  \hline
 5 & 3 & 2.51 & [0,0.368] \\
  \hline
 5 & 4 & 2.51 & [0,0.368] \\
  \hline
 6 & 2 & 2.63 & [0,0.306] \\ 
  \hline
 6 & 3 & 2.63 & [0,0.307] \\ 
  \hline
 6 & 4 & 2.63 & [0,0.307] \\ [0.5ex] 
  \hline
\end{tabular}
\caption{Summary of numerical investigations for $A_{t,m}$ and $X_{t,m}$ with different $m$ and $n$ values.}
\label{table:1}
\end{figure}
\end{remark}

We expect that similar arguments could be used to establish Crouzeix's conjecture for other classes of nilpotent matrices. However, because we often have $\| X_t \| \| X_t^{-1} \| >2,$ this line of argumentation will not be able to establish the entire conjecture, even for matrices of the form that we study in this paper.

\end{document}